\documentclass[10pt]{article}  

\usepackage{xcolor}              
\usepackage{graphicx}
\usepackage{amsmath,amssymb, amsfonts, amsthm}
\usepackage{latexsym}
\usepackage{bbm}
\usepackage{hyperref}

\usepackage[version=3]{mhchem}

\usepackage[left=1in,right=1in,top=1in,bottom=1in]{geometry}

\definecolor{MyDarkBlue}{rgb}{0,0.08,0.50}
\definecolor{BrickRed}{rgb}{0.65,0.08,0}

\hypersetup{
colorlinks=true,       
    linkcolor=MyDarkBlue,          
    citecolor=BrickRed,        
    filecolor=red,      
    urlcolor=cyan           
}

\usepackage[shortlabels]{enumitem}
\usepackage[mathscr]{eucal}
\usepackage{psfrag}
\usepackage{epsfig}
\usepackage{tikz}
\usetikzlibrary{automata}

\usepackage[numbers,sort]{natbib}

\usepackage{relsize}

\setlength{\marginparwidth}{1in}

\newtheorem{Theorem}{Theorem}[section]
\newtheorem{Lemma}[Theorem]{Lemma}
\newtheorem{Proposition}[Theorem]{Proposition}
\newtheorem{Corollary}[Theorem]{Corollary}

\theoremstyle{definition}
\newtheorem{Example}{Example}[section]
\newtheorem{Definition}{Definition}[section]
\newtheorem{remark}{Remark}[section]
\newtheorem{Assumption}{Assumption}

\newcommand{\bigCI}{\mathop{\underline{\raisebox{0pt}[0pt][1pt]{$\;||\;$}}}}

\newcommand{\s}{\quad}
\newcommand{\non}{\nonumber}

\newcommand{\clg}{\mathcal{G}}

\newcommand{\cls}{\mathcal{S}}

\newcommand{\clr}{\mathcal{R}}
\newcommand{\cla}{\mathcal{A}}

\newcommand{\Z}{\mathbb{Z}}
\newcommand{\R}{\mathbb{R}}

\newcommand{\prPhi}{\{\Phi(t)\colon t\ge 0\}}
\newcommand{\prX}{\{X(t)\colon t\ge 0\}}
\newcommand{\prXuptot}{\{X(s)\colon 0\leq s\leq t\}}
\newcommand{\prXp}{\{X'(t)\colon t\ge 0\}}
\newcommand{\prZ}{\{Z(t)\colon t\ge 0\}}
\newcommand{\prXZ}{\{(X(t),Z(t))\colon t\ge 0\}}
\newcommand{\pFt}{\{\mathcal{F}^X_t\colon t\ge 0\}}
\newcommand{\prW}{\{W(t)\colon t\ge 0\}}
\newcommand{\prZp}{\{Z'(t)\colon t\ge 0\}}
\newcommand{\prXZp}{\{(X'(t),Z'(t))\colon t \ge 0\}}

\newcommand{\Fs}{\mathcal{F}^{X}_{s}}
\newcommand{\Ft}{\mathcal{F}^{X}_{t}}

\newcommand{\todist}[1]{\xrightarrow[#1]{\mathcal{L}}}

\newcommand{\transf}{\rightsquigarrow}
\newcommand{\ntransf}{\not\rightsquigarrow}

\newcommand{\floor}[1]{\left\lfloor #1 \right\rfloor}

\newcommand{\conv}{\ast}
\DeclareMathOperator*{\Conv}{\scalebox{1.5}{\raisebox{-0.2ex}{$\circledast$}}}

\numberwithin{equation}{section}


\usepackage{soul}


\begin{document}

\tikzset{every node/.style={auto}}
 \tikzset{every state/.style={rectangle, minimum size=0pt, draw=none, font=\normalsize}}
 \tikzset{bend angle=20}

	\author{Daniele Cappelletti, 
       \thanks{ETH Zurich, daniele.cappelletti@bsse.ethz.ch
	}
\and Abhishek Pal Majumder
	\thanks{Stockholm University, majumder@math.su.se
	}
	\and
	Carsten Wiuf
	\thanks{University of Copenhagen, wiuf@math.ku.dk}\,\,\thanks{Corresponding author. Department of Mathematical Sciences, Universitetsparken 5, 2100 Copenhagen, Denmark
	}
      }

\title{{\bf Long-time asymptotics of  stochastic reaction systems}}

\maketitle

\bigskip
\noindent
{\bf Keywords:} Markov chain, Random environment, Stochastic recurrence equation, Gene regulation, Mono-molecular reaction network, Stationary distribution.

\bigskip

\begin{abstract}
We study the stochastic dynamics of a system of interacting species in a stochastic environment by means of a  continuous-time Markov chain with transition rates depending on the state of the environment. Models of gene regulation in systems biology take this form. We characterise the finite-time distribution of the Markov chain, provide conditions for ergodicity, and  characterise the stationary distribution  (when it exists) as a mixture of Poisson distributions. The mixture measure is uniquely identified as the law of a fixed point of a stochastic recurrence equation. This recursion  is crucial for statistical computation of moments and  other distributional features.
\end{abstract}

\section{Introduction} 

Reaction networks are widely used in system biology to describe the evolution of interacting molecular species. Though  the nature of interaction is motivated by examples of (bio)chemical reactions, similar models are considered in 
genetics \cite{E79}, epidemiology \cite{PCMV15}, and ecology \cite{G83}.
In recent years, stochastic models of reaction networks, based on continuous time Markov chains (CTMCs), have become fashionable, as the counts of molecular species    in experimental settings might be low or fluctuate considerably over time. In addition to this, a system of interacting species might itself be embedded into a stochastic environment that affects the reaction propensities   and the availability of resources.

A particular focus of stochastic reaction network theory has been to understand the long term behaviour of a model. For a class of reaction networks that has a `complex balanced' equilibrium in a deterministic sense, the stationary distribution has a Poisson product-form \cite{anderson2010product,cappelletti2016product,anderson2018non}, akin to  results in queuing theory.   Another important class of reaction networks, closely related to  complex balanced networks and to the reaction networks considered in this paper, is that of mono-molecular reaction networks  \cite{jahnke2007solving,gadgil2005}. For this class, not only the stationary distribution but also the finite-time distributions can be determined. Specifically, the latter  takes the form of a convolution of  Multinomial and Poisson distributions with parameters evolving according to an ODE system. The equilibrium of the ODE system determines the parameters of the stationary distribution. In these cases, the environment is constant, that is,  absent.

In this work we study the evolution of a stochastic mono-molecular reaction network in a stochastic environment. In the biological context the stochastic environment might itself be a reaction network, however, we assume a more  general setting. As an example, consider the reaction network
$$
0 \ce{<=>} E_{1}, \quad 0 \ce{<=>} E_{2}, \quad 
 E_1 \ce{->} E_1+ S, \quad  E_2+S \ce{->} E_2.
$$
The first two reactions form a fluctuating stochastic environment, where the molecules (resources) $E_1$ and $E_2$ continuously are produced and degraded. If $E_1$ and/or  $E_2$ are present they catalyse the production of the substrate   $S$ and its subsequent degradation. The availability of $E_1,E_2$ is determined by the first two reversible reactions which occur independently of the number of substrate molecules $S$. The production rate of $S$ depends linearly on the number of $E_1$ molecules, while the degradation of $S$ depends linearly on the number of $E_2$ molecules as well as the number of substrate molecules. Biologically, it is an example  of a non-mutual symbiotic relationship (parasitism) where the presence of the parasite species (here $S$) depends on the presence of the host species (here $E_1,E_2$), but not vice versa.

We interpret a stochastic reaction network in a stochastic environment (to be defined in Section \ref{Backg}) as a Markov process $\prXZ$ on a joint state space $\Gamma\times \Z_{\ge 0}^d$. The marginal process $\prX$ is itself a Markov process on the state space $\Gamma$ and accounts for the environments.
The process counting the species of the reaction network is $\prZ$ with state space  $\Z_{\ge 0}^d$.   Recent work has been done to understand (exponential) ergodicity in similar, though different, settings \cite{cloez2015exponential,shao2014ergodicity,shao2015ergodicity}, where the environmental process is  a switching process between regimes.

 In the setting of a  stochastic reaction network in a stochastic environment, we derive the finite-time distributions of $Z(t)$, conditional on the trajectories  $X(s)$, $0\le s\le t$. Specifically, we show that the finite-time distributions are obtained as a convolution of multinomial and Poisson distributions, similar to the mono-molecular reaction networks in a constant environment discussed above. The particular difference being that the parameters depend on the trajectories $X(s)$, $0\le s\le t$, in an accumulative way (not just on $X(t)$), unlike the parameters of the mono-molecular networks in a constant environment, where the parameters solve an ODE. Furthermore, we give conditions under which the the joint process $\prXZ$ is ergodic in terms of ergodicity of the environmental process $\prX$ and conditions on the count process $\prZ$. 
 In recent work \cite{gupta2014scalable}, an approach based on linear Lyapunov functions provides  sufficient conditions for the existence and uniqueness of a stationary distribution of  large class of stochastic reaction networks. It is noteworthy, that these results  remain inconclusive for simple examples  in our setting.

An explicit characterisation of the stationary distribution is however not available. In Theorem \ref{T1} we interpret the stationary distribution of the mono-molecular species  as a mixture of Poisson distributions, thus providing an example of a Poisson representation \cite{G83}. We show that the mixing measure appears as the solution of a random affine equation (also called a stochastic recurrence equation \cite{buraczewski2016stochastic}). Stochastic recurrence equations have been studied in other contexts, see for example  \cite{bertoin2005exponential}. In our case, it is quite remarkable that the randomness of the environment is reflected in the solution of the stochastic recurrence equation  only through  path-wise functionals (for simple cases they are of integral forms). We demonstrate the usefulness of the stochastic recurrence equation by providing a  simulation scheme to simulate from the stationary distribution of the process $\prZ$ given the state of the environment.

 This paper is organised as follows. In Section \ref{Backg} we provide background and notation, and introduce the processes  we are interested in formally.  In Section  \ref{Ex}  an example is given, where  the exact finite-time distribution of the counting process $\prZ$ can be computed. We  generalise the example  in Section \ref{finite time} and discuss the long time behaviour is  in Section \ref{StatDist}. Examples are given in Section \ref{regulatory}, and  a simulation scheme using  the invariant measure  is provided in Section \ref{sample}.

\section{Background and definitions}\label{Backg}

In the following  $\Z$, $\Z_{\geq0}$, and $\Z_{>0}$ denote the integers, the non-negative integers, and the positive integers, respectively. Similarly, $\R$ and $\R_{\ge 0}$ denote the real and the non-negative real numbers. For any $u\in \R$, $\floor{u}=\max\{m\in\Z\colon m\leq u\}$ denotes the largest integer smaller than or equal to $u$. We denote the $i$-th  unit vector in $\R^n$ by $e_i$ and let $e=\sum_{i=1}^{n}e_i$ be the vector whose entries are all one. If $u,v\in\R^n$, we write $u\leq v$ or $u\geq v$ if the inequality holds component-wise. Moreover, define
$$u!=\prod_{i=1}^n u_i!\quad\text{and}\quad u^v=\prod_{i=1}^n u_i^{v_i}.$$
The $n\times n$ identity matrix is denoted by $I_n$. If $A=(a_{ij})_{i,j}$ is a real $n_1\times n_2$ matrix,  then $A^\top$ denotes the transpose  matrix, and $\|A\|_{1}=\max_{j=1,\ldots,n_2}\sum_{i=1}^{n_1} |a_{ij}|$ denotes the $L_{1}$-matrix norm of $A$. In particular, if $v\in\R^n$ then $\|v\|_{1}=\sum_{i=1}^n |v_i|$. For square matrices we define
 $$\prod_{i=m_1}^{m_2}A_i=\begin{cases}
                           A_{m_2}A_{m_2-1}\ldots A_{m_1} &\text{if }m_1\le m_2,\\
                           A_{m_2}A_{m_2+1}\ldots A_{m_1} &\text{otherwise}.
                          \end{cases}
$$
The extremes $m_1$ and $m_2$ are potentially infinite. Note the matrices are ordered from high to low index.

 For a random variable $W$, $\mathcal{L}(W)$ denotes the law of $W$. Given two random variables $W_1$ and $W_2$, defined on the same probability space, we write $W_1\bigCI W_2$ if they are independent, and 
 $W_1\sim W_2$ if 
 $\mathcal{L}(W_1)=\mathcal{L}(W_2)$.
  Given two probability distributions $\mathcal{L}_1$ and 
 $\mathcal{L}_2$ on $\R^n$ (or a subset thereof), their convolution is denoted by $\mathcal{L}_1\conv \mathcal{L}_2$. 
 Let
 $$\Conv_{i=1}^n \mathcal{L}_i=\mathcal{L}_1\conv\cdots\conv\mathcal{L}_n.$$
 
 Convergence in distribution is denoted $W_{i}\todist{} W$ for $i\to\infty$, where $i\in\Z_{\ge 0}$ or $i\in\R_{\ge 0}$
 A stochastic process $\prW$ with values in a metric space $H$ is  tight  if for  $\varepsilon>0$ there exists a compact set $M_{\varepsilon}\subseteq H$ such that $\sup_{t\in\R_{\geq0}}P(W(t)\notin M_{\varepsilon})\le \varepsilon$. 
 A continuity set $A$ of a random variable $W$ is a measurable set such that $P(W\in \partial A)=0$, where $\partial A$ denotes the boundary of the set $A$.

Let $P_{t}(\cdot,\cdot)\colon\cla\times \mathcal{F}\to [0,1]$, $t>0$, be the Markov transition kernel of a  CTMC $\prX$ on a measure space $(\cla,\mathcal{F})$. If all  states communicate with each other,  the chain is said to be irreducible. A stationary distribution for $\prX$ is a probability measure $\pi$ on $(\cla,\mathcal{F})$ satisfying $\int_\mathcal{A}\int_B P_{t}(x,dy)d\pi(x)=\pi(B)$ for all $B\in\mathcal{F}$ and $t>0$. The process $\prX$ is said to be \emph{ergodic for any initial condition} if for any $x\in\mathcal{A}$  there exists a probability measure $\pi_x$ on $(\cla,\mathcal{F})$ such that
$\pi_x(A)=\lim_{t\to\infty}P_{t}(x,A)$ for all $A\in\mathcal{F}$. The probability measures $\pi_x$ are necessarily stationary distributions.  If $\pi_x$ is independent of $x$ then the process is said to be \emph{ergodic}. If the state space is irreducible and the process is ergodic then $\pi_x$ is the unique stationary distribution on $(\cla,\mathcal{F})$.

Next we introduce  notation for probability distributions and  random variables that will appear in various results. $\textup{Exp}(\mu)$ with $\mu>0$  denotes an exponential random variable  with mean $\mu$. For   $m,n \in\Z_{\geq0},$ and $ p=(p_{1},\ldots,p_n)\in[0,1]^n$  with $\sum_{j=1}^n p_j\le 1,$  $\textup{Multi}(m,p)$  denotes an $\{x\in\Z_{\geq0}^n \colon\sum_{i=1}^n x_{i}\le m\}$-valued random variable such that 
\begin{align*}
P(\textup{Multi}(m,p)=(i_1,\ldots,i_n)) &= \frac{m!}{i_{1}!\ldots i_n!(m-\sum_{j=1}^n i_j)!} p^{i_1}_1\ldots p^{i_n}_n\Big(1-\sum_{j=1}^n p_{j}\Big)^{m-\sum_{j=1}^n i_j} \\
&=\binom{m}{i_1,\ldots,i_n} p^{i_1}_1\ldots p^{i_n}_n\Big(1-\sum_{j=1}^n p_{j}\Big)^{m-\sum_{j=1}^n i_j}.
\end{align*}
Note that the latter is not the usual notation for a multinomial random variable and the multinomial coefficient.
In particular, if $n=1$, then $\textup{Multi}(m,p)$   is a binomial random variable, $\textup{Bin}(m,p)$.
 For  $n\in \Z_{\geq0}$, $\mu=(\mu_1,\ldots,\mu_n)\in\R^n_{\geq0},$ we let  $\textup{Pois}(\mu)$ denote an $\Z_{\geq0}^n$-valued random variable such that
$$P\big(\textup{Pois}(\mu)=(i_1,\ldots,i_n)\big)=\prod_{j=1}^n e^{-\mu_{j}}\frac{\mu^{i_j}_j}{i_j!}.$$ 
Namely, $\textup{Pois}(\mu)$ is distributed as $n$ independent Poisson variables with non-negative rates.

\subsection{Stochastic reaction  systems with  stochastic environments}

A reaction network  with species set $\cls=\{S_1,\ldots,S_d\}$ is of a set of reactions $\clr=\{y_1\to y_1',y_2\to y_2',\ldots y_k\to y_k'\}$, where $y_r=\sum_{j=1}^d \gamma_{rj} S_j,$ $y_r'=\sum_{j=1}^d \gamma'_{rj} S_j$ are linear  non-negative integer combinations of species. 
The left hand side of a reaction is called the \textit{reactant}, the right hand side, the \textit{product},  jointly they are \textit{complexes}, and $\xi_r = y'_r- y_r\in\R^d$ is the \emph{reaction vector}, the net gain of species in a reaction. We assume that any species takes part in at least one reaction and that the reactant and product sides are never identical. 
 
A reaction network can be defined by its \emph{reaction graph}, a directed graph with node set the complexes and edge set the reactions between complexes.

The evolution of the species counts  is usually modelled as a homogeneous CTMC $\prZ$ with state space $\Z_{\geq0}^d$ \cite{AK:book, ET:book}. Here, we consider a generalisation of the standard setting, assuming the transition rates of $\prZ$ change over time in a stochastic way.

\begin{Definition}
 A stochastic reaction system with stochastic environment is a triple $\left(\clg, \Lambda, \prX\right)$ such that:
 \begin{itemize}
  \item $\clg$ is a reaction network,
  \item $\prX$ is a homogeneous CTMC with irreducible discrete state space $\Gamma$. 
  Furthermore, we assume $\prX$ is \emph{non-explosive} (or \emph{regular}), implying that  the number of jumps in a bounded time interval  is almost surely (a.s.) finite \cite{norris1998markov}. 
   \item $\Lambda=(\lambda_1,\ldots,\lambda_k)$ is a vector of $k$ \emph{reaction rate functions}, referred to as a \emph{kinetics}, such that  $\lambda_{r}\colon\Gamma\times\Z_{\geq0}^d \to \R_{\ge 0}$ and $\lambda_r\not=0$ a.s., $r=1,\ldots,k$.
  \end{itemize}
\end{Definition}

The evolution of the species counts of $\cls$ is modelled  by the stochastic process $\prZ$, such that $\prXZ$ is a homogeneous CTMC with state space $\Gamma\times\Z_{\geq0}^d$, satisfying the following:
\begin{enumerate}
 \item The processes $\prX$ and $\prZ$ never jump at the same time a.s.
 \item For  $x\in\Gamma$, $z,z+\xi\in\Z_{\ge 0}^d$,  the transition rate from $(x,z)$ to $(x,z+\xi)$ is 
  $$\sum_{\substack{r\colon y_r\to y'_r\in\clr \\ y'_r-y_r=\xi}}\lambda_{r}(x,z).$$
 \item\label{part:X_ind} For  $x,x'\in \Gamma$ and  $z\in\Z_{\geq0}^d$, the transition rate from $(x,z)$ to $(x',z)$ equals the transition rate of $\prX$ from $x$ to $x'$, and does not depend on $z$.
 \item $\lambda_{r}(x,z)> 0$ only if $z\ge y_r$. This avoids $\prZ$ from exiting the state space $\Z_{\geq0}^d$.
\end{enumerate}
By definition $\prX$ and $\prXZ$ are CTMCs, but not necessarily $\prZ$. Note that the Markov property of $\prX$ is in accordance with \eqref{part:X_ind}.

   If the rate functions $\lambda_r$ are independent of the stochastic environment $\prX$, that is, $\lambda_r(x,z)=\lambda_r(z)$ for $1\leq r\leq k$, then we simply refer to $(\clg,\Lambda)$ as a stochastic reaction system. This definition coincides with standard terminology \cite{AK:book, ET:book}.

Let $(\Fs)_{s\geq0}$ denote the filtration of $\prX$. Then, it follows from the above definition that for  $0<s\leq t$ the random variable $X(t)$ is conditionally independent of $Z(s)$ given $\Fs$, that is,
\begin{equation}\label{parasite}
X(t) \bigCI Z(s)\,\Big|\, \Fs \quad \text{for}\quad s<t.
\end{equation}
The property \eqref{parasite} can be recast as
$\mathcal{L}(X(t)\big|X(s),Z(s))=\mathcal{L}(X(t)\big| X(s))$. Models with a similar conditional structure is studied in \cite{bowsher2010stochastic}.
 
 Following  \cite{kurtz1972relationship}, we might write
\begin{equation*}
Z(t)=Z(0)+\sum_{r=1}^{k}N_{r}\!\left(\int_{0}^{t}\lambda_{r}\left(X(s),Z(s)\right)ds\right)\xi_{r},
\end{equation*}
where $\{N_{r}\colon r=1,\ldots,k\}$ is a set of i.i.d.\ unit-rate Poisson processes and $Z(0)$ is the initial state at time zero.  An equivalent description of $\prZ$ is the following: Conditioned on the path of $\prX$, $X(s)=x(s)$ for all $s\in[0,t]$, the process 
$\prZ$ is a non-homogeneous CTMC
 \begin{equation}\label{eq:plugging_in}
  Z(t)\,\,\sim \,\,Z(0)+\sum_{r=1}^{k}N_{r}\!\left(\int_{0}^{t}\lambda_{r}(x(s),Z(s))\,ds\right)\xi_{r}.
  \end{equation}

A common choice of kinetics is  stochastic mass-action kinetics, which in our setting takes the form
\begin{equation*}
\lambda_{r}(x,z)= \kappa_{r}(x)\frac{z!}{(z-y_{r})!}1_{\{x\in\Z^d_{\ge0}\colon x\geq y_{r}\}}(z),\quad r=1,\ldots,k, 
\end{equation*}
 for some functions $\kappa_{r}\colon \Gamma\to \R_{\geq0}$, $r=1,\ldots,k$.  
 Mass-action kinetics corresponds to the hypothesis that the system is well-stirred such that the propensity of a reaction is proportional to the number of  combinations of the  species in the reactant. A stochastic reaction system (with stochastic environment)  with mass-action kinetics is called a stochastic mass-action system (with stochastic environment).

\begin{Example}\label{ex:typical}
 A typical biological situation is that of a system of non-mutual symbiotic interactions where some species  evolve conditionally on the presence of other species, the stochastic environment. As an example, consider the reaction network
\begin{align*}
 0 \ce{<=>[\lambda_1][\lambda_2]} S_{1}, &\qquad 0  \ce{<=>[\lambda_3][\lambda_4]} S_{2},\\
 S_{2} \ce{->[\lambda_5]} S_{2}+m S_3, &\qquad  S_{1}+S_3  \ce{->[\lambda_6]} S_{1},
\end{align*}
where $m$ is an integer. The set of reactions is enumerated according to the index of $\lambda_r$, written above (or below) the arrow of $y_r\to y_r'$. Assume the species counts evolve according to a stochastic mass-action system with constant environment, that is, with transition rates
$$\begin{array}{lll}
   \lambda_1(w)=\kappa_1, & \lambda_2(w)=\kappa_2w_1, & \lambda_3(w)=\kappa_3,\\
   \lambda_4(w)=\kappa_4w_2, & \lambda_5(w)=\kappa_5w_2, & \lambda_6(w)=\kappa_6w_1w_3,
  \end{array}$$
with $\kappa_r>0$ and  $w=(w_1,w_2,w_3)\in\Z_{\geq0}^3$.  Let $\{Y(t)\colon t\ge 0\}$ be the associated CTMC.

The number of  $S_3$ molecules   does not affect the numbers of  $S_1$ and $S_2$  molecules. Therefore,  we might regard the latter as constituting a stochastic environment, which affects the production and degradation of $S_3$. Namely,  consider $\prX=(Y_1(\cdot), Y_2(\cdot))$ with state space $\Gamma=\Z_{\geq0}^2$ and $\prZ=Y_3(\cdot)$. In this case, the reaction graph associated with $\prZ$ is simply
$$0 \ce{->[\widetilde\lambda_1]} m S_3, \qquad  S_3  \ce{->[\widetilde\lambda_2]} 0,\qquad \widetilde\lambda_1(x,z)=\kappa_5x_2,\qquad \widetilde\lambda_2(x,z)=\kappa_6x_1z.$$

While this is a case  arising in practice, for example, gene (de)activation controlling the production of proteins, more general reaction rates might be considered. For example, 
$$\widetilde\lambda_1(x,z)=\frac{\kappa_5}{1+x_2},\qquad \widetilde\lambda_2(x,z)=\frac{\kappa_6}{1+x_1^3}z.$$
Moreover, the process $\prX$ does not have to be confined to non-negative vectors: rescaled species counts could be considered in the spirit of multiscale analysis \cite{KangKurtz} or even negative states of $\prX$ could be considered, for example, modelling the effect of temperature on  the transition rates of $\prZ$.
\end{Example}

\begin{remark}\label{rem:independence}
  Consider a species $S$ such that the reactions consuming it are of the form $S\to y'$, where $y'$ is a complex not involving $S$. Under the assumption of mass-action kinetics, one might construct the process $\prZ$ such that the fate of the present molecules of the species $S$  (what they are transformed into and when) are  conditionally independent given $\{\Ft\colon t\ge 0\}$. In fact,  the reaction rate function of $S\to y'$ is of the form $\lambda(X(t),z)=\kappa(X(t)) z_i$ (so $S$ is the $i$-th species). This rate might be constructed from $z_i$ independent exponentially distributed random variables with rate $\kappa(X(t))$.
\end{remark}

\section{A case study}\label{Ex}

 We begin our formal analysis with an example, essentially expanding Example \ref{ex:typical}.  We show that the distribution of $Z(t)$ can be explicitly computed in terms of the path $\{X(s)\colon 0\leq s\leq t\}$. This can be accomplished in general assuming  the reaction rate functions are linear in $Z(t)$, see Section \ref{mod}.

 Consider a stochastic reaction system, $\left(\clg, \Lambda, \prX\right)$, 
$$0 \ce{->[\lambda_1]} m S, \qquad  S \ce{->[\lambda_2]} 0,\qquad \lambda_1(x,z)=\kappa_1(x),\qquad \lambda_2(x,z)=\kappa_2(x)z,$$
for fixed $m\in\Z_{\ge 0}$.  The distribution of $Z(t)$ given $X(s)$, $0\le s\le t$, is a convolution of  simple distributions.

\begin{Proposition}\label{P1}  For the model described above, the conditional distribution of $Z(t)$ given $\Ft$, for $t>0$,  is
 $$ \mathcal{L}\left(\left.\textup{Bin}\left(Z(0), e^{- \int_{0}^{t}  \kappa_2(X(s))ds}\right)\right|\Ft\right) \conv \left(\Conv_{i=1}^m \mathcal{L}(iN_i|\Ft)\right),$$
where $N_{i}$, $i=1,\ldots,m$, are Poisson random variables with mean  
\begin{equation}\label{poissonmean}
 \binom{m}{i} \int_{0}^{t}  \kappa_1(X(u)) e^{-i\! \int_{u}^{t}\kappa_2(X(s))ds}(1-  e^{-\!\int_{u}^{t}\kappa_2(X(s))ds})^{m-i}\,du, \quad i=1,\ldots,m.
\end{equation}
\end{Proposition}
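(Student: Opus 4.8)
The plan is to condition on the entire environmental path and exploit the fact that, under mass-action kinetics, the degradation reaction $S\to 0$ removes each molecule independently (cf.\ Remark \ref{rem:independence}). Fix $t>0$ and condition on $\prXuptot$, writing $X(s)=x(s)$. By \eqref{eq:plugging_in}, given this path the process $\prZ$ is a non-homogeneous CTMC on $\Z_{\ge0}$ in which batches of $m$ molecules immigrate at the time-dependent rate $\kappa_1(x(s))$ and each molecule present at time $s$ is removed at rate $\kappa_2(x(s))$. Since $\prX$ is non-explosive it has only finitely many jumps on $[0,t]$ a.s., so $s\mapsto\kappa_1(x(s))$ and $s\mapsto\kappa_2(x(s))$ are bounded on $[0,t]$; in particular the batch-arrival process is an ordinary inhomogeneous Poisson process with finite total intensity $\int_0^t\kappa_1(x(s))\,ds<\infty$, and $\int_u^t\kappa_2(x(s))\,ds<\infty$ for every $u\in[0,t]$. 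Set $p(u,t)=\exp(-\int_u^t\kappa_2(x(s))\,ds)$, the probability that a molecule present at time $u$ is still present at time $t$.

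Next I would decompose $Z(t)$ according to the origin of each molecule counted at time $t$: such a molecule is either one of the $Z(0)$ molecules present at time $0$, or it belongs to a batch that immigrated at some time $u\in(0,t]$. By the independent-removal property, the $Z(0)$ initial molecules survive to time $t$ independently, each with probability $p(0,t)$, contributing $\textup{Bin}(Z(0),p(0,t))$ molecules. For the immigrated molecules I would invoke the marking (colouring) theorem for Poisson processes: mark a batch arriving at time $u$ with the number $i\in\{0,1,\dots,m\}$ of its $m$ molecules that survive to time $t$, which is $\textup{Bin}(m,p(u,t))$-distributed, independently across batches. Then, for each $i$, the batches carrying mark $i$ form an independent Poisson process, so the number $N_i$ of such batches is Poisson with mean $\binom{m}{i}\int_0^t\kappa_1(x(u))\,p(u,t)^i(1-p(u,t))^{m-i}\,du$, which is exactly \eqref{poissonmean}, and $N_0,\dots,N_m$ are mutually independent. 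A batch with mark $i$ contributes exactly $i$ molecules at time $t$, so the immigrated molecules contribute $\sum_{i=1}^m i N_i$, independent of the $\textup{Bin}(Z(0),p(0,t))$ term. Hence, conditionally on the path, $Z(t)\sim\textup{Bin}(Z(0),p(0,t))+\sum_{i=1}^m i N_i$, which is the asserted convolution (the $i=0$ term being null). Since $\Ft$ is generated by $\prXuptot$, this pathwise identity is precisely the claimed conditional law given $\Ft$.

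The only genuine subtlety is justifying the decomposition and the marking step rigorously, i.e.\ that conditionally on the path one may realise $\prZ$ via an inhomogeneous Poisson batch-arrival process together with independent ``lifetimes'' for every individual molecule having inhomogeneous hazard $\kappa_2(x(\cdot))$. This is exactly the construction underlying \eqref{eq:plugging_in} and Remark \ref{rem:independence}, and the finiteness of $\int_0^t\kappa_1(x(s))\,ds$ and of the hazard guarantees there are a.s.\ finitely many batches and no explosion, so the colouring theorem applies on $[0,t]$ without integrability issues. An alternative, more computational route would be to write the conditional probability generating function $G(\theta,t)=E[\theta^{Z(t)}\mid\Ft]$, derive from the master equation the linear transport equation $\partial_t G=\kappa_1(x(t))(\theta^m-1)G-\kappa_2(x(t))(\theta-1)\partial_\theta G$, and verify by direct substitution that the product of the binomial and Poisson generating functions with the stated parameters solves it with initial datum $\theta^{Z(0)}$; I expect the probabilistic argument above to be the cleaner of the two.
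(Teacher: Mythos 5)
Your proposal is correct and follows essentially the same route as the paper: the same decomposition of $Z(t)$ into the surviving initial molecules (a conditional Binomial, via the independent-degradation property of Remark \ref{rem:independence}) and the surviving immigrants, grouped by how many of the $m$ molecules in each birth batch reach time $t$. The only difference is that where you invoke the marking/colouring theorem for inhomogeneous Poisson processes to conclude that the batches with exactly $i$ survivors yield independent Poisson counts $N_i$ with mean \eqref{poissonmean}, the paper establishes the same fact by an explicit computation of the conditional Laplace transform of the immigrant contribution given the number and locations of the birth instances; the two justifications are interchangeable here.
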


\begin{proof} We begin with the case $m=1$. We show that $\mathcal{L}\left(Z(t)\big| \Ft\right)$ is 
\begin{equation}
\mathcal{L}\left(\left.\text{Bin}\left(Z(0), e^{- \int_{0}^{t}  \kappa_2(X(s))ds}\right)\right|\Ft\right) \conv \mathcal{L}\left(\left.\text{Pois}\left(\int_{0}^{t}  \kappa_1(X(u))e^{- \int_u^{t}  \kappa_2(X(s))ds} \,du\right)\right|\Ft\right). \label{eq1}
\end{equation}

Conditioned on $\Ft$, the process $Z(s)$, $0\le s\le t$, evolves as a non-homogeneous CTMC in the sense of \eqref{eq:plugging_in}. Moreover, the degradation events of the $S$ molecules  are conditionally independent of each other, see   Remark \ref{rem:independence}. 
Specifically, the time until degradation of an $S$ molecule  is exponentially distributed with time dependent rate. Hence, the number of $S$ molecules that survived until time $t$ starting from $Z(0)$ molecules at time $0$ is distributed as 
$$ \text{Bin}\left(Z(0), e^{- \int_{0}^{t}  \kappa_2(X(s))ds}\right)$$
given $\Ft$, which accounts for the first term in \eqref{eq1}.
 We will next show that given $\Ft$,  the total number of surviving $S$ molecules at time $t$ out of those born  in $(0,t]$ is distributed as
 $$\text{Pois}\left(\int_{0}^{t} \kappa_1(X(u))e^{- \int_{u}^{t} \kappa_2(X(s))ds} \,du\right)$$
 given $\Ft$. Let the total number of birth instances of $S$ molecules in $[0,t]$ be $B_{t}$, which follows a Poisson distribution with  mean 
 $\lambda_t= \int_{0}^{t} \kappa_1(X(u)) \,du$,  conditioned on $\Ft$. Birth instances conditioned on $B_t$ can be thought of as independent realisations from the density 
\begin{equation}
\frac{  \kappa_1(X(\cdot)) } {\int_{0}^{t} \kappa_1(X(u)) \,du}.\label{birthdensity}
\end{equation}
Each of these  particles survive with  probability
 $$P_{t}=\frac{\int_{0}^{t}\kappa_1(X(u))e^{-\int_{u}^{t}\kappa_2(X(s))ds}\,du}{\int_{0}^{t} \kappa_1(X(u))\,du},$$
 where the exponential term is the probability of survival given a molecule is born a time $u$. Multiplying  $P_t$ with $\lambda_t$ proves  \eqref{eq1}.
 
 We generalise  \eqref{eq1} to arbitrary  $m>1.$ The fate of the initial $Z(0)$ molecules is described similarly to the case $m=1$. For each birth occurrence (that is, a firing of the reaction $0\to mS$) there are $m$ $S$ molecules being born and among them either 0, 1, $\ldots$  or $m$ $S$ molecules will survive until time $t$. Given $\Ft$, the probability that exactly $i$ out of the $m$ $S$ molecules that are born at time $u\in(0,t]$ survive until time $t$ is
\begin{equation*}
P^{t}_{u}(i)= \binom{m}{i} e^{-i\int_{u}^{t}\kappa_2(X(s))ds}\left(1-  e^{-\int_{u}^{t}\kappa_2(X(s))ds}\right)^{m-i}.
\end{equation*}

Let the unordered birth instances of $S$ molecules in $(0,t]$ be $U=(U_{1},\ldots,U_{B_t})$ where each $U_i$ is a realisation from \eqref{birthdensity}. Define   $Y(t)=\sum_{i=1}^{B_t}Y_{U_{i}},$ where  $Y_{U_{i}}= j,$ if $j\in\{0,\ldots,m\}$ molecules survived at time $t$ for the birth instance at $U_{i} \in (0,t]$.  It follows that 
 $$E\left(e^{-\gamma Y_{U_{i}}}\big| U=(U_{1},\ldots,U_{r}), B_t=r, \Ft\right)=\sum_{j=0}^{m} P^{t}_{U_{i}}(j)e^{-j\gamma }=1-\sum_{j=1}^{m}P^{t}_{U_{i}}(j)(1-e^{-j\gamma})$$
 for $\gamma>0$.
Hence, the Laplace transform of the surviving $S$ molecules that are born in $(0,t]$ is
\begin{align}
 E\left(e^{- \gamma Y(t)}\big| \Ft\right)&= E\left(E\left(e^{-\gamma Y(t)}\big|U=(U_{1},\ldots,U_{r}),B_t=r\right)\Big| \mathcal{F}_{T}^{X}\right)\non\\
&= \sum_{r=0}^{\infty}\int_{0\le U_{1},\ldots,U_{r}\le t}\Big(\prod_{i=1}^{r} \Big[1-\sum_{j=1}^{m}P^{t}_{U_{i}}(j)(1-e^{-j\gamma})\Big]\kappa_1(X(U_i))\Big) \frac{e^{-  \int_{0}^{t} \kappa_1(X(s)) ds}}{r!}dU_{1}\ldots dU_{r}\non\\
&=\sum_{r=0}^{\infty}\bigg[\int_{0}^{t} \kappa_1(X(u))\,du-\sum_{j=1}^{m}(1-e^{-j\gamma})\!\int_{0}^{t} \kappa_1(X(u))P^{t}_{u}(j)\,du\bigg]^{r} \frac{e^{- \int_{0}^{t} \kappa_1(X(s)) ds}}{r!}\non\\
&= e^{-\sum_{j=1}^{m}(1-e^{-j\gamma})\!\int_{0}^{t} \kappa_1(X(u))P^{t}_{u}(j)\,du}.\label{multi}
\end{align}
So $Y(t) \sim N_{1}+2N_{2}+3N_{3}+\ldots +mN_{m}$, where 
$N_{i}$ is distributed as $\text{Pois}\left(\int_{0}^t \kappa_1(X(u)) P_{u}^{t}(i)\,du\right)$ given $\Ft$, and 
  $$\mathcal{L}\left(Z(t)\big| \Ft\right)=\mathcal{L}\left.\left(\text{Bin}\left(Z(0), e^{- \int_{0}^t  \kappa_1(X(s))ds}\right) +N_{1}+2N_{2}+3N_{3}+\ldots +mN_{m}\right|\Ft\right),$$
which completes  the proof.
\end{proof}

The result of Proposition \ref{P1} can be generalised further by letting $\kappa_1$ and $\kappa_2$ be functions of both the state $x\in\Z_{\geq0}$ and time. As a matter of fact, the proofs in this paper for the results concerning finite time all hold equally for reaction rate functions depending on time directly.
Moreover, a slightly modified result holds for more general distributions of inter-arrival times between the occurrences of the degradation reaction $S\to0$, as long as the $S$ molecules are degraded independently on each other given $\{X(t)\colon  t\ge 0\}$, see Section \ref{mod}. For the more general setting,  let $T_{u}$ be the survival time of an $S$ molecule born at time $u$, and define $P_{u,t}=P(T_{u} > t| \mathcal{F}_t^{X}).$
Then the result of Proposition \ref{P1} holds with \eqref{poissonmean} replaced by 
\begin{equation}\label{eq:P1general}
\binom{m}{i} \int_{0}^{t} \kappa_1(X(u),u) P_{u,t}^{i} (1-P_{u,t})^{m-i}\,du,
\end{equation}
where $\kappa_1$ and $\kappa_2$ are now allowed to dependent on time directly.

\section{General case of study}\label{mod}

The main aim  is to study ergodicity of a stochastic reaction system with stochastic environment, assuming that $\prX$ is an ergodic  process. To do so, we  consider  a specific family of models for which  the stationary distribution can be characterised explicitly. To motivate the necessity of further assumptions, we give  two examples. The first is a transient reaction system, regardless the ergodicity of the process $\prX$.
The second is an example of an explosive  reaction system with stochastic ergodic environment. 

\begin{Example}\label{ex:transient}
 Assume the chain $\prX$ has two states,  denoted by 0 and 1. This is often the case in genetic models, where $X(t)$ denotes whether a gene is active at time $t$ or not. Let $q_{10}$ be the transition rate from 1 to 0 and $q_{01}$ the transition rate from 0 to 1.
 
 Consider the following stochastic mass-action system with stochastic environment:
$$S_1+S_2\ce{->[\lambda_1]}S_3,\qquad S_1+S_3\ce{->[\lambda_2]}S_2,\qquad 0\ce{->[\lambda_3]}S_1,$$
with
$$\lambda_1(x,z)=\kappa_1 xz_1z_2,\quad \lambda_2(x,z)=\kappa_2 (1-x)z_1z_3\quad\text{and}\quad \lambda_2(x,z)=\kappa_3 ,$$
for positive constants $\kappa_1$, $\kappa_2$ and $\kappa_3$. The total amount of molecules of $S_2$ and $S_3$ is conserved. Assume that
$$Z_2(t)+Z_3(t)=b>0.$$
When $X(t)=1$, degradation of an $S_1$ molecule consumes a molecule of $S_2$, which is not recreated because $\lambda_2(1,z)=0$. Hence, when $X(t)=1$ at most $b$ molecules of $S_1$ can be degraded. Similarly, $\lambda_1(0,z)=0$ implies that at most $b$ molecules of $S_1$ can be degraded when $X(t)=0$.
Hence, by using renewal theory \cite{durrett} and by setting up the renewal intervals in between two consecutive visits of $\prX$ to 1, we have that 
$$\lim_{t\to\infty}\frac{Z_1(t)}{t}\geq \kappa_3-2b\frac{q_{10}q_{01}}{q_{10}+q_{01}}.$$
It follows that for any choice of parameters such that the latter is strictly positive, the model is transient.

An interesting observation is the following. Consider the stochastic mass-action system with fixed environment
$$S_1+S_2\ce{->[\widetilde{\lambda}_1]}S_3,\qquad S_1+S_3\ce{->[\widetilde{\lambda}_2]}S_2,\qquad 0\ce{->[\widetilde{\lambda}_3]}S_1,$$
where
$$\widetilde{\lambda}_1(z)=\widetilde{\kappa}_1 z_1z_2,\qquad \widetilde{\lambda}_2(z)=\widetilde{\kappa}_2 z_1z_3,\qquad \widetilde{\lambda}_2(z)=\widetilde{\kappa}_3,$$
with   $\widetilde{\kappa}_1, \widetilde{\kappa}_2, \widetilde{\kappa}_3$ positive. For all parameter values, the function $V(z)=z$ is a Lyapunov function, hence the process is always ergodic. Therefore, the stochastic fluctuations of the environment allow for a behaviour that is not possible in the constant environment. As a consequence, the example  suggests that in general it is not correct to average the characteristic of a random environment over time to study the asymptotic behaviour of a stochastic reaction system with stochastic environment. 
\end{Example}

\begin{Example}\label{ex:explosion}
Consider the following stochastic mass-action system with stochastic environment:
$$2S\ce{<=>[\lambda_1][\lambda_2]}3S,\qquad \lambda_1(x,z)=\kappa_1(x)z(z-1),\quad \lambda_2(x,z)=\kappa_2(x)z(z-1)(z-2).$$
Assume  the process $\prX$ is ergodic, and that there exists a recurrent state $x'$ such that $\kappa_1(x')\neq0$ and $\kappa_2(x')=0$.  In this case, with probability one, $\prX$  hits $x'$ infinitely often. When this occurs, the number of $S$ molecules  evolves according to the stochastic mass-action system with constant environment
 $$2S\ce{->[\widetilde{\lambda}_1]}3S,$$
 and $\widetilde{\lambda}_1(z)=\kappa_1(x')z(z-1)$. Specifically, the species count of  $S$ evolves according to a pure birth process with (almost) quadratic birth rate. It follows that whenever $\prX$ hits $x'$,  $\prZ$ has a positive probability of exploding. Hence it will eventually explode with probability one. 
 \end{Example}

\subsection{The linearity assumption}

We  introduce a family of stochastic reaction systems with stochastic environment.   

\begin{Assumption}\label{as0}{(Linearity assumption)}
 Suppose that $(\clg, \Lambda, \prX)$ is a stochastic mass-action system with stochastic environment, such that  each reaction  takes  one of the following forms:
\begin{equation}\label{model}
\text{Production}:\quad 0\to m_jS_j, \quad
\text{Conversion}:\quad S_i\to S_j,\quad 
\text{Degradation}:\quad S_{i}\to 0,
\end{equation}
for  $m=(m_{1},\ldots,m_{d})\in\Z_{\geq0}^{d}$, and $1\leq i,j\leq d$. For notational convenience, we  set $m_j=0$ if there is not  a reaction $0\to \beta S_j$  for any $\beta>0$.
\end{Assumption}

The stochastic reaction system in Section \ref{Ex} satisfies Assumption \ref{as0}, while those of Example \ref{ex:transient} and Example \ref{ex:explosion} do not. 
Under  Assumption \ref{as0}, let  $\widetilde \lambda_{ij}\colon \Gamma\to\R^d_{\geq0}$, for $0\leq i,j\leq d$, be the functions:
\begin{itemize}
 \item  $\widetilde\lambda_{0j}(x)$ is the reaction rate function of $0\to m_jS_j$ if the latter is in $\clr$, and 0 otherwise.
 \item $\widetilde\lambda_{ij}(x)z_i$ is the reaction rate function of $S_i\to S_j$ if the latter is in $\clr$, and 0 otherwise.
 \item $\widetilde\lambda_{i0}(x)z_i$ is the reaction rate function of $S_i\to 0$ if the latter is in $\clr$, and 0 otherwise.
\end{itemize}
We further define the function $A\colon\Gamma\to\R^{d\times d}$ as $A(\cdot)=(\,a_{ij}(\cdot)\,)_{1\le i,j\le d}$ with
\begin{equation}\label{eq:AX}
 a_{ij}(x)=\begin{cases}
              \widetilde\lambda_{ji}(x)&\text{for }i\neq j,\\
              -\sum_{\substack{k=1 \\ k\neq i}}^{d} \widetilde\lambda_{ki}(x)- \widetilde\lambda_{i0}(x)&\text{for }i=j,
             \end{cases}
\end{equation}
and the function $B\colon\Gamma\to\R^d_{\geq0}$ as $B(\cdot)=(\,b_i(\cdot)\,)_{1\le i\le d}$ with
\begin{equation}\label{eq:BX}
b_i(x)=\widetilde\lambda_{0i}(x).
\end{equation}
For  $x\in\Gamma$, $A(x)$ has non-positive column sums and non-negative off-diagonal elements. Hence it may be considered  the transpose of a sub-generator matrix of a Markov chain.

Given $\pFt$, the functions $A(X(\cdot))$ and $B(X(\cdot))$ are functions of time and to stress this fact we introduce the notation $A_X(t)=A(X(t))$ and $B_X(t)=B(X(t))$ for any $t\ge 0$.

\subsection[The matrix Phi]{The matrix $\Phi(\cdot)$}

Given a function $M\colon\R_{\geq0}\to \R^{n\times n}$ and a non-singular  matrix $H_0\in\R^{n\times n}$, a \textit{fundamental matrix solution} of
\begin{equation}\label{eq:fundamental_matrix}
H(0)=H_0\quad\text{and}\quad\frac{d}{dt}H(t)=M(t)H(t) \quad \text{for all}\quad t> 0,
\end{equation}
is a function $H\colon\R_{\geq0}\to \R^{n\times n}$ that is non-singular for all $t\in\R_{>0}$ and solves \eqref{eq:fundamental_matrix}, where the derivative  is intended component-wise. Such a solution always exists for any non-singular initial condition $H_0$ \cite{perko}.

For our purpose, we define $\prPhi$ as the $\R^{d\times d}$-valued stochastic process that solves
\begin{equation}\label{funda}
\Phi(0)=I_d\quad\text{and}\quad\frac{d}{dt}\Phi(t)=A_X(t)\Phi(t) \quad \text{for all}\quad t> 0.
 \end{equation}
An explicit construction of the matrix $\Phi(t)$ given $\Ft$ is in Lemma \ref{P5.4}.  Intuitively, \eqref{funda} might be thought of as a time-varying Kolmogorov forward equation, as described below. As a consequence, a formal interpretation of the quantity $\Phi(\cdot)$ can be derived, see Lemma~\ref{lem:interpretation}.

 For simplicity, assume   $\prX$ is constant, such that  $A_X=A_X(t)$ is independent of time. In this case, $\prZ$ is a mass-action system in the standard sense: The transition rates are constant  and as a consequence the process is a time-homogeneous CTMC. If Assumption~\ref{as0} holds, then the molecules in the system evolve independently of each other, in the sense of Remark~\ref{rem:independence}. In particular, 
the species type of a specific molecule evolves as a CTMC   with state space $\{S_1,\dots,S_d,0\}$, where $0$ is an absorbing state denoting degradation. The transition matrix for this process is 
 
 $$
\renewcommand\arraystretch{1.3}
Q=\left(
\begin{array}{c|c}
  A^\top_X &  -A^\top_x e \\
  \hline
  0 & 0
\end{array}
\right).
$$
 Hence, if  $P^{(i,j)}(t)$ is the transition probability from state $S_i$ to state $S_j$ over time $t$, then by Kolmogorov backward equation
  $$\frac{d}{dt}P^{(i,j)}(t)= P^{(i,j)}(t)A^\top_X.$$
 It follows that $P^{(i,j)}(t)$ is the $(j,i)$ entry of the matrix $\Phi(t)$. The same holds even if the process $\prX$ changes over time, as stated in Lemma~\ref{lem:interpretation}.

In this spirit  the matrix $\Phi(\cdot)$ has been  used to describe state probabilities for deterministically changing environments \cite{jahnke2007solving}  and for networks of $M/M/\infty$ queues  with time-varying stochastic rates \cite{o1986m,d2008m}, which  \cite{jahnke2007solving} might be considered a special case of. We restate this result in Theorem \ref{thm:jahnke2007solving} below.

For notational convenience, define $\prW$ as the $\R^d_{\geq0}$-valued stochastic process solving
\begin{equation*}
W(0)=0 \quad\text{and}\quad \frac{d}{dt}W(t)=A_{X}(t)W(t)+B_{X}(t) \quad \text{for all}\quad t>0.
\end{equation*}
By Lemma \ref{Appe1} in the Appendix we have
\begin{equation}\label{eq:convergence}
W(t)=\int_{0}^{t}  \Phi(u,t)B_X(u) \,du\quad\text{for all }t\in\R_{\geq0},
\end{equation}
where the integral is intended component-wise, and  
$$\Phi(u,t)=\Phi(t)\Phi^{-1}(u), \qquad \text{for }t\geq u\geq 0.$$
Clearly,  $\Phi(0,t)=\Phi(t)$. Moreover, $\Phi(u,u+\cdot)$ is the fundamental matrix solution solving
\begin{equation}\label{funda_shift}
\Phi(u,u)=I_d \quad\text{and}\quad \frac{d}{dt}\Phi(u,u+t)=A_X(u+s)\Phi(u,u+t) \quad \text{for all}\quad t>0.\\
\end{equation}
Furthermore,  the matrix $\Phi(u,t)$ fulfils the equality
\begin{equation}\label{eq:phi_tu}
\Phi(u,t)=\prod_{i=0}^{k-1} \Phi(t_i,t_{i+1})=\Phi(t_{k-1},t_k)\ldots \Phi(t_0,t_1),
\end{equation}
for any $0\le u= t_0\le t_{1}\le\ldots\le t_k=t$.

\begin{Lemma}\label{P5.4}
Suppose Assumption \ref{as0} holds. Let $T_{0}=0$ and $T_{i}$, $i\ge 1$, denote the  $i$-th jump time of $\prX$. For $s\in[T_{i},T_{i+1})$, let  $A_i=A_{X}(s)$.  Then, the matrix $\Phi(t)$ can be expressed as
\begin{equation}\label{fundjump}
\Phi(t)=e^{A_{N(t)}(t-T_{N(t)})}\prod_{i=0}^{N(t)-1}e^{A_i(T_{i+1}-T_{i})},\quad t\ge 0,
\end{equation}
where $N(t)=\sup\{i\colon T_{i}\le t\}$.  As a consequence,
\begin{equation}\label{phiprod}
\Phi(u,t) =\left\{\begin{array}{ccc} e^{A_{N(t)}(t-T_{N(t)})}\left(\prod_{i=N(u)+1}^{N(t)-1}e^{A_i(T_{i+1}-T_{i})}\right)e^{A_{N(u)}(T_{N(u)+1}-u)} & \textup{for}  & N(u)+1\le N(t),\\
e^{A_{J_t}(t-u)} & \textup{for}& N(u)= N(t). \end{array}  \right.
\end{equation}
\end{Lemma}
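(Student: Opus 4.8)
\textit{Proof strategy.} The plan is to use that, since $\prX$ is a CTMC, its paths are piecewise constant: between consecutive jump times $\prX$ does not move, so the coefficient matrix $A_X(\cdot)$ is piecewise constant and equals the constant value $A_i$ on each interval $[T_i,T_{i+1})$ (this is exactly why $A_i:=A_X(s)$ is well defined for $s\in[T_i,T_{i+1})$). Non-explosivity of $\prX$ guarantees $N(t)<\infty$ a.s., so $[0,t]$ meets only finitely many of these intervals and all products below are finite. Throughout, $\prPhi$ solving \eqref{funda} is understood, as usual when the coefficient is discontinuous, as the unique continuous, piecewise-$C^1$ function with $\Phi(0)=I_d$ that satisfies the ODE on each open interval $(T_i,T_{i+1})$; existence and uniqueness are classical and are in any case confirmed by the explicit formula we are about to derive.

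\textit{Derivation of \eqref{fundjump}.} I would argue by induction on the jump index. On $[0,T_1)$ the coefficient is the constant $A_0$, so $\Phi$ solves $\Phi(0)=I_d$, $\tfrac{d}{dt}\Phi(t)=A_0\Phi(t)$, whence $\Phi(t)=e^{A_0 t}$ there and, by continuity of $\Phi$ at $T_1$, $\Phi(T_1)=e^{A_0(T_1-T_0)}$ (recall $T_0=0$). Assume now $\Phi(T_i)=\prod_{j=0}^{i-1}e^{A_j(T_{j+1}-T_j)}$. On $[T_i,T_{i+1})$ the coefficient is the constant $A_i$, so the unique solution of $\tfrac{d}{dt}\Phi(t)=A_i\Phi(t)$ with value $\Phi(T_i)$ at time $T_i$ is $\Phi(t)=e^{A_i(t-T_i)}\Phi(T_i)$; letting $t\uparrow T_{i+1}$ and using continuity gives $\Phi(T_{i+1})=e^{A_i(T_{i+1}-T_i)}\Phi(T_i)=\prod_{j=0}^{i}e^{A_j(T_{j+1}-T_j)}$, which closes the induction. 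Finally, an arbitrary $t\ge0$ lies in $[T_{N(t)},T_{N(t)+1})$, and the same one-step identity yields $\Phi(t)=e^{A_{N(t)}(t-T_{N(t)})}\Phi(T_{N(t)})$; substituting the product expression for $\Phi(T_{N(t)})$ gives \eqref{fundjump}. Since each factor $e^{A_i\tau}$ is invertible, $\Phi(t)$ is non-singular, so it is genuinely the fundamental matrix solution of \eqref{funda}.

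\textit{Derivation of \eqref{phiprod}.} Write \eqref{fundjump} for $\Phi(t)$ and for $\Phi(u)=e^{A_{N(u)}(u-T_{N(u)})}\prod_{i=0}^{N(u)-1}e^{A_i(T_{i+1}-T_i)}$, so that $\Phi^{-1}(u)=\big(\prod_{i=0}^{N(u)-1}e^{A_i(T_{i+1}-T_i)}\big)^{-1}e^{-A_{N(u)}(u-T_{N(u)})}$. In $\Phi(t)\Phi^{-1}(u)=\Phi(u,t)$ the factor $\prod_{i=0}^{N(t)-1}e^{A_i(T_{i+1}-T_i)}$ splits (in the paper's high-to-low ordering) as $\big(\prod_{i=N(u)}^{N(t)-1}e^{A_i(T_{i+1}-T_i)}\big)\big(\prod_{i=0}^{N(u)-1}e^{A_i(T_{i+1}-T_i)}\big)$, and the low-index block cancels against $\big(\prod_{i=0}^{N(u)-1}e^{A_i(T_{i+1}-T_i)}\big)^{-1}$, leaving
$$\Phi(u,t)=e^{A_{N(t)}(t-T_{N(t)})}\Big(\prod_{i=N(u)+1}^{N(t)-1}e^{A_i(T_{i+1}-T_i)}\Big)e^{A_{N(u)}(T_{N(u)+1}-T_{N(u)})}e^{-A_{N(u)}(u-T_{N(u)})},$$
after peeling off the lowest-index factor $e^{A_{N(u)}(T_{N(u)+1}-T_{N(u)})}$ from the block $\prod_{i=N(u)}^{N(t)-1}$. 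Because the last two exponents are scalar multiples of the same matrix they commute, so $e^{A_{N(u)}(T_{N(u)+1}-T_{N(u)})}e^{-A_{N(u)}(u-T_{N(u)})}=e^{A_{N(u)}(T_{N(u)+1}-u)}$, giving the first line of \eqref{phiprod} when $N(u)+1\le N(t)$ (the middle product being empty when $N(u)+1=N(t)$). When $N(u)=N(t)$, the entire product $\prod_{i=0}^{N(t)-1}$ is common to $\Phi(t)$ and $\Phi(u)$ and cancels, yielding $\Phi(u,t)=e^{A_{N(t)}(t-T_{N(t)})}e^{-A_{N(t)}(u-T_{N(t)})}=e^{A_{N(t)}(t-u)}$, the second line (here $J_t$ is the common index $N(u)=N(t)$). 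Alternatively, one may derive \eqref{phiprod} directly from the cocycle identity \eqref{eq:phi_tu} applied to the partition of $[u,t]$ whose interior break-points are the jump times $T_{N(u)+1},\dots,T_{N(t)}$, since on each resulting subinterval $A_X$ is constant and hence, by \eqref{funda_shift}, the corresponding factor is the matrix exponential of that constant times the subinterval length.

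\textit{Main difficulty.} There is no serious obstacle; the argument is bookkeeping. The points that need care are: invoking non-explosivity so that only finitely many inter-jump intervals meet $[0,t]$ (keeping the products finite); correctly handling the two partial intervals $[T_{N(u)},u]$ and $[T_{N(t)},t]$ at the endpoints; adhering to the paper's convention that matrix products run from high index to low; and being explicit that \eqref{funda} is to be solved in the piecewise sense because $A_X$ jumps at each $T_i$. None of these is delicate.
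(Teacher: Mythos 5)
Your proof is correct and follows essentially the same route as the paper: observe that $A_X(\cdot)$ is constant on each inter-jump interval, solve the ODE there as $\Phi(t)=e^{A_i(t-T_i)}\Phi(T_i)$, induct on the jump index to get \eqref{fundjump}, and then obtain \eqref{phiprod} from $\Phi(u,t)=\Phi(t)\Phi^{-1}(u)$ by cancelling the common low-index block. The paper's proof is a terse version of exactly this argument; your additional remarks on non-explosivity, the piecewise interpretation of \eqref{funda}, and the reading of $J_t$ as the common index $N(u)=N(t)$ are all consistent with it.
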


\begin{proof}
 Observe that on the interval $[T_{i},T_{i+1})$ the matrix $A_{X}(\cdot)$ is the constant matrix $A_i$ for all $i$. Hence, the fundamental matrix solution to $\Phi'_X(t)=A_{i}\Phi(t)$ for $t\in[T_{i},T_{i+1}]$ is  $\Phi(t) =e^{A_{i}(t-T_{i})}\Phi(T_i).$ The result follows by induction on $i\ge 0$ and by the fact that $\Phi(0)$ is the identity matrix. Equation \eqref{phiprod} follows from $\Phi(u,t)=\Phi(t)\Phi^{-1}_X(u)$ and from \eqref{fundjump} applied to both $\Phi(t)$ and $\Phi^{-1}_X(u)$.
\end{proof}

\section{Finite-time distribution}\label{finite time}

The goal of this section is to describe the distribution of $Z(t)$ given $\Ft$ by means of $\Phi(t)$. We start with a similar result from \cite{jahnke2007solving} when $\prX$ is deterministic.

\begin{Theorem}\label{thm:jahnke2007solving}
 Suppose Assumption~\ref{as0} holds and  that $0\leq m_j\leq 1$ for  $1\leq j\leq d$. Moreover, assume the process $\prX$ is deterministic, that is, there exists a function $x\colon \R_{\geq0}\to\Gamma$ such that 
 $$X(t)=x(t)\quad\text{for all}\quad t\ge 0,$$
 a.s. Then, for  $t\ge 0$,
\begin{equation*}
\mathcal{L}(Z(t))=\Big(\Conv_{i=1}^d\mathcal{L}\left(\textup{Multi}\left(Z_i(0),\Phi(t)e_i\right)\right)\Big)\conv\mathcal{L}(\textup{Pois}\left(W(t)\right)),
\end{equation*}
\end{Theorem}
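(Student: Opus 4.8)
The plan is to reduce the statement to Proposition \ref{P1} (the single-species case with $m\in\{0,1\}$) by exploiting the independence of molecules that Assumption \ref{as0} guarantees, in the sense of Remark \ref{rem:independence}. Conditioned on $\Ft$, each molecule present in the system at time $0$, and each molecule created by a production reaction in $(0,t]$, evolves independently as a continuous-time Markov chain on the state space $\{S_1,\dots,S_d,0\}$ with $0$ absorbing and time-dependent generator given (in transpose) by $A_X(\cdot)$, as explained in the discussion preceding Lemma \ref{lem:interpretation}. Since the various molecules evolve conditionally independently, the distribution of $Z(t)$ given $\Ft$ factorizes into a convolution of the contributions of (a) the $Z_i(0)$ initial molecules of each species $S_i$, and (b) the molecules born through each production reaction $0\to S_j$.

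The first main step is to identify the contribution of the initial molecules. Fix $i$; a single $S_i$-molecule present at time $0$ is, at time $t$, in state $S_j$ with probability $P^{(i,j)}(t)=(\Phi(t))_{ji}=(\Phi(t)e_i)_j$ (by the interpretation of $\Phi$ in Lemma \ref{lem:interpretation}), and degraded with the remaining probability $1-\sum_j(\Phi(t)e_i)_j=1-\|\Phi(t)e_i\|_1$; note $\Phi(t)e_i\ge 0$ componentwise and these entries sum to at most one because $A_X$ is the transpose of a sub-generator, so $\Phi(t)e_i$ is a valid parameter for a \textup{Multi} distribution. The $Z_i(0)$ initial molecules being conditionally i.i.d., their joint contribution to $(Z_1(t),\dots,Z_d(t))$ is exactly $\textup{Multi}(Z_i(0),\Phi(t)e_i)$ given $\Ft$, and summing (convolving) over $i=1,\dots,d$ yields the first factor.

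The second main step handles the molecules born in $(0,t]$. The production reaction $0\to S_j$ fires according to a conditionally Poisson process of intensity $b_j(X(u))=(B_X(u))_j$; each molecule born at time $u$ independently ends up in state $S_\ell$ at time $t$ with probability $(\Phi(u,t)e_j)_\ell=(\Phi(u,t))_{\ell j}$, using $\Phi(u,t)=\Phi(t)\Phi^{-1}(u)$ and \eqref{funda_shift}. By the standard Poisson thinning/marking argument already carried out in the proof of Proposition \ref{P1} (compute the conditional Laplace transform of the marked sum given the number and locations of births, then average out the births), the vector counting surviving born molecules in each state is, given $\Ft$, distributed as $\textup{Pois}$ with mean vector $\int_0^t \Phi(u,t)B_X(u)\,du$; summing the contributions over all production reactions simply adds the mean vectors, and the restriction $m_j\le 1$ ensures each birth contributes a single molecule so no binomial-within-Poisson appears, unlike the $m>1$ case of Proposition \ref{P1}. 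By \eqref{eq:convergence} this mean vector equals $W(t)$, giving the second factor $\textup{Pois}(W(t))$. Convolving the two independent contributions completes the proof.

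The main obstacle is making the conditional-independence decomposition fully rigorous: one must justify that, conditioned on the whole path of $\prX$ up to time $t$, the fates of distinct molecules (and of distinct birth events) really are independent non-homogeneous Markov chains driven by the common, now-deterministic, time-dependent rates. This is exactly the content of Remark \ref{rem:independence} applied pathwise, together with the representation \eqref{eq:plugging_in}, but it requires care because the conversion reactions $S_i\to S_j$ couple the species marginally even though they do not couple individual molecules; the bookkeeping is cleanest if one tracks each molecule's type-trajectory as an independent copy of the $\{S_1,\dots,S_d,0\}$-valued chain with generator $A_X(\cdot)^\top$ and then reads off $Z(t)$ as the occupation vector. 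Everything else — the identification of $\Phi(t)e_i$ and $\Phi(u,t)B_X(u)$, and the Poisson marking computation — is either quoted from Lemma \ref{lem:interpretation} and \eqref{eq:convergence} or is a verbatim repetition of the argument in Proposition \ref{P1}.
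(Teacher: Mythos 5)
The paper does not actually prove Theorem~\ref{thm:jahnke2007solving}: it is quoted from the literature (Jahnke and Huisinga) as a known result for mono-molecular systems with a deterministic environment, and the surrounding text only supplies the heuristic particle interpretation of $\Phi(\cdot)$. Your argument is therefore a genuine addition rather than a rederivation, and it is essentially correct: the decomposition into conditionally independent single-molecule trajectories on $\{S_1,\dots,S_d,0\}$, the identification of the initial molecules' contribution as $\textup{Multi}(Z_i(0),\Phi(t)e_i)$, and the Poisson marking argument yielding mean vector $\int_0^t\Phi(u,t)B_X(u)\,du=W(t)$ via \eqref{eq:convergence} is exactly the standard proof, and it is consistent with how the paper itself argues in Propositions~\ref{P1} and~\ref{P2}. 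One caveat: you justify the identification $P^{(i,j)}(t)=(\Phi(t))_{ji}$ by citing Lemma~\ref{lem:interpretation}, but within the paper's logical order that lemma is \emph{proved using} Theorem~\ref{thm:jahnke2007solving}, so invoking it here is circular. The fix is easy and you already gesture at it: derive the single-particle transition matrix directly from the Kolmogorov (forward) equation for the time-inhomogeneous chain with sub-generator $A_X(\cdot)^\top$, which identifies its transpose with the fundamental matrix solution of \eqref{funda} (and, for molecules born at time $u$, with \eqref{funda_shift}) without any appeal to the lemma. With that substitution the proof is complete; the conditional-independence bookkeeping you flag as the main obstacle is indeed the content of Remark~\ref{rem:independence} applied pathwise and needs no further machinery since the environment is deterministic here.
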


\begin{Lemma}\label{lem:interpretation}
 Suppose Assumption \ref{as0} holds and let $\Phi(\cdot)$ be as in \eqref{funda}. For  $1\leq i,j\leq d$ and $t\geq u\geq0$, let $P_X^{(i,j)}(u,t)$ be the probability, given $\Ft$, that a molecule of species $S_i$, present at time $u$, eventually is converted to a molecule of species $S_j$ at time $t$. Then, for  $1\leq i,j\leq d$ and $t\geq u\geq0$,
 \begin{enumerate}[(a)]
 \item\label{item:probability} $P_X^{(i,j)}(u,t)$ is  the $(j,i)$-th entry of  $\Phi(u,t)$,
 \item\label{item:degradation_prob} the probability given $\Ft$ that a molecule of species $S_i$, present at time $0$, eventually is degraded by time $t$ is
 $$1-\sum_{j=1}^d P_X^{(i,j)}(u,t)=1-e^\top \Phi(u,t) e_i=1-\|\Phi(u,t) e_i\|_1,$$
 \item\label{item:norm_1_leq_1}  $\|\Phi(u,t) e_i\|_1\leq 1$.
 \end{enumerate}
\end{Lemma}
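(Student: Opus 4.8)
\noindent\emph{Outline of proof.} The plan is to reduce all three claims to the fate of a single tagged molecule, conditionally on the environment. Fix $t\ge 0$ and condition on $\Ft$; since $\prX$ is regular, the path $\{X(s)\colon 0\le s\le t\}$ is then a fixed, piecewise-constant trajectory with finitely many jumps. Under Assumption~\ref{as0} the only reactions that can remove or transform a molecule already present are the conversions $S_i\to S_j$ and the degradations $S_i\to 0$, with propensities $\widetilde\lambda_{ij}(X(s))z_i$ and $\widetilde\lambda_{i0}(X(s))z_i$ linear in the count of the consumed species. Extending Remark~\ref{rem:independence} from a single degradation reaction to this linear class --- by thinning, for each species $S_i$, the aggregate propensity $\big(\sum_{\ell\ne i}\widetilde\lambda_{i\ell}(X(s))+\widetilde\lambda_{i0}(X(s))\big)z_i$ in the representation \eqref{eq:plugging_in} into $z_i$ i.i.d.\ per-molecule clocks --- one constructs $\prZ$ so that, given $\Ft$, the \emph{type} of each molecule (the species it currently is, or the absorbing state $0$ once degraded) evolves as a time-inhomogeneous Markov jump process on $\{S_1,\dots,S_d,0\}$, independently across molecules, with conversion rate $\widetilde\lambda_{k\ell}(X(s))$ from $S_k$ to $S_\ell$ and degradation rate $\widetilde\lambda_{k0}(X(s))$. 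By \eqref{eq:AX}, the generator of this chain restricted to the transient block $\{S_1,\dots,S_d\}$ (degradation entering the diagonal) is exactly $A_X^\top(s)$.

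For part \ref{item:probability}, let $\widehat P(u,t)$ be the $d\times d$ sub-stochastic transition matrix of this tagged-molecule chain over $[u,t]\subseteq[0,t]$, so that by construction its $(i,j)$ entry equals $P_X^{(i,j)}(u,t)$. The Kolmogorov forward equation gives $\partial_t\widehat P(u,t)=\widehat P(u,t)A_X^\top(t)$ with $\widehat P(u,u)=I_d$; transposing, $\widehat P(u,t)^\top$ solves $\partial_t\widehat P(u,t)^\top=A_X(t)\widehat P(u,t)^\top$ with $\widehat P(u,u)^\top=I_d$, which is precisely the initial value problem \eqref{funda_shift} characterising $\Phi(u,t)$. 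By uniqueness of the fundamental matrix solution, $\widehat P(u,t)^\top=\Phi(u,t)$; hence $P_X^{(i,j)}(u,t)$ is the $(j,i)$ entry of $\Phi(u,t)$.

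Parts \ref{item:degradation_prob} and \ref{item:norm_1_leq_1} then follow by elementary bookkeeping. For a molecule of species $S_i$ present at time $u$, the events ``it is of species $S_j$ at time $t$'', $j=1,\dots,d$, together with ``it has been degraded by time $t$'', are mutually exclusive and exhaustive since $0$ is the unique absorbing state; hence the degradation probability equals $1-\sum_{j=1}^d P_X^{(i,j)}(u,t)$. By part \ref{item:probability}, $\sum_{j=1}^d P_X^{(i,j)}(u,t)=\sum_{j=1}^d(\Phi(u,t))_{ji}=e^\top\Phi(u,t)e_i$, and since each entry is a probability, hence non-negative, this equals $\|\Phi(u,t)e_i\|_1$, giving \ref{item:degradation_prob}. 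Finally, $\|\Phi(u,t)e_i\|_1=\sum_{j=1}^d P_X^{(i,j)}(u,t)$ is a sum of probabilities of disjoint events, hence at most $1$, which is \ref{item:norm_1_leq_1}.

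The only genuinely delicate point is the construction in the first paragraph --- making rigorous that, conditionally on the environment path, distinct molecules carry conditionally independent type-trajectories governed by the stated time-inhomogeneous generator; this is the natural generalisation of Remark~\ref{rem:independence} and can be carried out directly from \eqref{eq:plugging_in} by splitting each species' driving Poisson process. Once this is in place, the identification $\widehat P(u,t)^\top=\Phi(u,t)$ is just ODE uniqueness, and parts \ref{item:degradation_prob}--\ref{item:norm_1_leq_1} are immediate.
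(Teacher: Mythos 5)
Your proof is correct, but it reaches part \ref{item:probability} by a genuinely different route than the paper. The paper passes to the production-free modification of the system started from a single molecule, $\widehat Z(0)=e_i$, identifies this (via the conditional independence of Remark~\ref{rem:independence}) with the type process of a tagged molecule, and then reads off the law $\textup{Multi}\left(1,\Phi(u,u+s)e_i\right)$ by applying Theorem~\ref{thm:jahnke2007solving} conditionally on the environment path through \eqref{eq:plugging_in}; the entries of $\Phi(u,t)$ are then the multinomial parameters. You bypass Theorem~\ref{thm:jahnke2007solving} entirely: you realise the tagged molecule's type as a time-inhomogeneous Markov jump process on $\{S_1,\dots,S_d,0\}$ whose restriction to the transient block has generator $A_X^\top(s)$, write the Kolmogorov forward equation for its sub-stochastic transition matrix, and identify its transpose with $\Phi(u,t)$ by uniqueness of solutions to \eqref{funda_shift}. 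This is precisely the argument the paper sketches informally in Section~\ref{mod} for a constant environment and asserts, without proof, to carry over to the stochastic case; you supply that extension, and it is legitimate because regularity of $\prX$ makes the conditioned rates piecewise constant with finitely many pieces on $[0,t]$, exactly as exploited in Lemma~\ref{P5.4}. Your route is more self-contained (it needs only the single-molecule Markov structure and ODE uniqueness rather than the full convolution formula of Theorem~\ref{thm:jahnke2007solving}), while the paper's route reuses an already-stated theorem and so is shorter on the page. Both proofs rest on the same genuinely delicate ingredient --- the thinning/conditional-independence construction generalising Remark~\ref{rem:independence} to conversions and degradations --- which you correctly flag as the one point requiring care and which the paper likewise invokes without further detail. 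Parts \ref{item:degradation_prob} and \ref{item:norm_1_leq_1} are handled in essentially the same way in both arguments.
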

\begin{proof}
(a) Consider a molecule of species $S_i$ that is present at time $u$, and let $F(s)$ denote its type (a species or $0$ if it is degraded) at time $u+s$, for $s\geq0$. 
 
 Define $\widehat{X}(s)=X(s+u)$ for $s\geq0$. Consider modification of the model where  production is not allowed (that is, $\widetilde{\lambda}_{0i}$ is the null function for all $1\leq i\leq d$). Denote by $\widehat{Z}(\cdot)$ the process associated with this model, and by $\{\widehat{X}(s)\colon s\geq0\}$ the stochastic environment.
 
 The fate of single molecules  are conditionally independent given $\Ft$, in the sense of Remark~\ref{rem:independence}. Then, the process $\{\widehat{Z}(s)\colon s\geq0\}$ given $\widehat{Z}(0)=e_i$ is distributed exactly as the process $\{F(s)\colon s\geq0\}$, and the dependence on the environment is the same. Hence, Given $\Ft$, it follows from \eqref{eq:plugging_in}, Theorem~\ref{thm:jahnke2007solving}, and  \eqref{funda_shift} that for  $s\geq0$ 
\begin{equation*}
\mathcal{L}\Big(\widehat{Z}(s)\big| \Ft, \widehat{Z}(0)=e_i\Big)\mathop{=}\mathcal{L}\left(\text{Multi}(1,\Phi(u,u+s) e_i)\right),
\end{equation*}

(b) This is a straightforward consequence of (a).
(c) This follows from (b).
\end{proof}

The next proposition provides a multi-dimensional version of Proposition \ref{P1}. Compared to the deterministic result in \cite{jahnke2007solving}, we  allow $m_j\ge 0$ to be arbitrary and not restricted to $0,1$.

For any $n_1,n_2\in \Z_{\geq0}$, let $\Theta_{n_1,n_2}$ be the set of $n_2$-tuples  $\{(\nu_{1},\ldots,\nu_{n_{2}})\in \Z_{\geq0}^{n_2} \colon  1\le \sum^{n_2}_{j=1}\nu_{j} \le n_{1}\}$. We refer to $\nu\in \Theta_{n_1,n_2}$ as a configuration.

\begin{Proposition}\label{P2}
Suppose Assumption \ref{as0} holds and let $\Phi(\cdot)$ be as in \eqref{funda}. Then, for any fixed $t\ge 0$, the conditional distribution of  $Z(t)$ given $\Ft$ is
\begin{equation}
\Big(\Conv_{i=1}^d\mathcal{L}\big(\textup{Multi}\left(Z_i(0),\Phi(t)e_i\right)\big| \Ft\big)\Big)\conv\Big(\Conv_{j=1}^d \Conv_{\nu\in \Theta_{m_j,d}} \mathcal{L}\big(\nu\, N_{\nu j}(t) \big| \Ft\big)\Big),\label{multinom1}
\end{equation}
where the initial condition $Z(0)=(Z_1(0),\ldots,Z_{d}(0))$ is the molecular counts at time zero,
$$N_{\nu j}(t)\,\,\sim\,\,\textup{Pois}\left(\int_{0}^{t} \widetilde\lambda_{0j}(X(u))\,g^X_{u,t}(\nu, m_{j})\,du\right),$$
and
$g^X_{u,t}(\nu, m_j)$ is the probability that the multinomial random variable $\textup{Multi}\left(m_j,\Phi(u,t)e_j\right)$ takes the value $\nu$,  given $\Ft$ ($\nu N_{\nu j}$ is the vector $\nu$ multiplied by the number $N_{\nu j}$).

In particular, if $0\le m_j\le 1$ for all $j=1,\ldots,d$, then 
$$\Conv_{j=1}^d \Conv_{\nu\in \Theta_{m_j,d}} \mathcal{L}\big(\nu\, N_{\nu j}(t)\big| \Ft\big)= \mathcal{L}\big(\textup{Pois}(W(t))\big| \Ft\big).$$
\end{Proposition}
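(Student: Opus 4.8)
The plan is to generalize the argument of Proposition \ref{P1} to the multi-species setting, using Lemma \ref{lem:interpretation} to supply the transition probabilities that replace the scalar survival factor $e^{-\int_u^t\kappa_2(X(s))ds}$. First I would treat the fate of the initial molecules: by Remark \ref{rem:independence} and the conditional independence of single-molecule histories given $\Ft$, a molecule of species $S_i$ present at time $0$ ends up, by time $t$, in species $S_j$ with probability $P_X^{(i,j)}(0,t)$, which by Lemma \ref{lem:interpretation}\eqref{item:probability} is the $(j,i)$ entry of $\Phi(t)$, or degraded with the complementary probability (Lemma \ref{lem:interpretation}\eqref{item:degradation_prob}), and $\|\Phi(t)e_i\|_1\le 1$ by part \eqref{item:norm_1_leq_1} guarantees this is a genuine sub-probability vector. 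Summing independently over the $Z_i(0)$ molecules of each species $i$ gives exactly the term $\Conv_{i=1}^d\mathcal{L}(\textup{Multi}(Z_i(0),\Phi(t)e_i)\,|\,\Ft)$; this part requires almost no new work beyond citing Lemma \ref{lem:interpretation}.

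The substantive step is the contribution of molecules born in $(0,t]$. Fix a production reaction $0\to m_jS_j$. As in Proposition \ref{P1}, conditional on $\Ft$ the birth instances of this reaction form a Poisson process, so the number of firings is $\textup{Pois}\big(\int_0^t\widetilde\lambda_{0j}(X(u))\,du\big)$ and, conditioned on their count, the instances are i.i.d.\ with density proportional to $\widetilde\lambda_{0j}(X(\cdot))$ on $(0,t]$. At a firing at time $u$, $m_j$ molecules of $S_j$ are created; by Lemma \ref{lem:interpretation} and conditional independence of their subsequent histories, the vector recording how many of them sit in each species at time $t$ is $\textup{Multi}(m_j,\Phi(u,t)e_j)$ given $\Ft$, and $g^X_{u,t}(\nu,m_j)$ is by definition the probability this multinomial equals the configuration $\nu\in\Theta_{m_j,d}$. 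I would then compute the conditional Laplace transform of the total over all firings of reaction $j$: conditioning on the number $r$ of firings and the birth times $U_1,\dots,U_r$, the transform factorizes as $\prod_i\big(\sum_{\nu}g^X_{U_i,t}(\nu,m_j)e^{-\gamma\cdot\nu}\big)$; integrating each $U_i$ against its density, multiplying by $\widetilde\lambda_{0j}(X(U_i))$, and Poisson-averaging over $r$ exactly as in \eqref{multi} collapses the sum to $\exp\big(-\sum_{\nu\in\Theta_{m_j,d}}(1-e^{-\gamma\cdot\nu})\int_0^t\widetilde\lambda_{0j}(X(u))g^X_{u,t}(\nu,m_j)\,du\big)$. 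This is the Laplace transform of $\sum_{\nu\in\Theta_{m_j,d}}\nu\,N_{\nu j}(t)$ with the $N_{\nu j}(t)$ independent Poissons of the stated means, establishing the second convolution factor; independence across the $d$ production reactions (and from the initial-molecule term) follows from the Poissonian structure, i.e.\ thinning/superposition of independent Poisson inputs.

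The main obstacle is bookkeeping rather than any conceptual difficulty: one must be careful that the index $\nu$ ranges over $\Theta_{m_j,d}$ (not all of $\Z_{\ge0}^d$) so that the "fully degraded" outcome $\nu=0$ is correctly excluded from the sum — it contributes $1-e^{0}=0$ to the exponent and hence drops out harmlessly — and that the multinomial $\textup{Multi}(m_j,\Phi(u,t)e_j)$ uses the paper's convention in which the leftover mass $1-\|\Phi(u,t)e_j\|_1$ is the probability of degradation, so that $\sum_{\nu\in\Theta_{m_j,d}}g^X_{u,t}(\nu,m_j)<1$ in general. The final special case $0\le m_j\le 1$ is then immediate: when $m_j=1$ the only configurations in $\Theta_{1,d}$ are the unit vectors $e_1,\dots,e_d$, so $N_{e_l,j}(t)\sim\textup{Pois}\big(\int_0^t\widetilde\lambda_{0j}(X(u))(\Phi(u,t))_{lj}\,du\big)$, and summing these independent Poissons over $j$ for each fixed $l$ gives a Poisson with mean $\int_0^t\big(\Phi(u,t)B_X(u)\big)_l\,du=W_l(t)$ by \eqref{eq:convergence}, so the whole second factor is $\mathcal{L}(\textup{Pois}(W(t))\,|\,\Ft)$, recovering Theorem \ref{thm:jahnke2007solving}.
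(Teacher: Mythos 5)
Your proposal is correct and follows essentially the same route as the paper: the same decomposition into the multinomial fate of the initial molecules and, per production reaction, a marked-Poisson/Laplace-transform computation generalising \eqref{multi} to identify the per-configuration counts $N_{\nu j}(t)$ as independent Poisson variables. The only (cosmetic) difference is that the paper establishes the case $0\le m_j\le 1$ first by citing Theorem~\ref{thm:jahnke2007solving} together with \eqref{eq:plugging_in} and then generalises, whereas you derive the general formula directly from Lemma~\ref{lem:interpretation} and obtain the $m_j\le 1$ case as a specialisation at the end.
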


\begin{proof}
We begin with the case $0\le m_j\le 1$ for all $j=1,\ldots,d$.  
Given $\Ft$, it follows from \eqref{eq:plugging_in} and Theorem~\ref{thm:jahnke2007solving} that 
\begin{equation}\label{mainJahnke}
\mathcal{L}\big(Z(t)\big| \Ft\big)=\Big(\Conv_{i=1}^d\mathcal{L}\big(\text{Multi}\big(Z_i(0),\Phi(t)e_i\big)\big| \Ft\big)\Big)\ast\mathcal{L}\Big(\text{Pois}(W(t))\big| \Ft\Big).
\end{equation}
If $m_j=0$, then $\Theta_{m_j,d}=\emptyset$. If $m_j=1$, then $\Theta_{m_j,d}=\{ e_i|i=1,\ldots,d\}$  and  $g^X_{u,t}(e_i, m_{j})=e_i^\top\Phi(u,t)e_j$.  It follows that 
$$\sum_{j=1}^d\widetilde\lambda_{0j}(X(u)) g^X_{u,t}(e_i, m_{j})=e^\top_i\Phi(u,t)B_X(u).$$
Using the definition of $W(t)$ in \eqref{eq:convergence} then \eqref{multinom1} is the same as \eqref{mainJahnke}, and  the result holds.

The multinomial terms in \eqref{mainJahnke} represent the degradation of the molecules present at time $0$, and the Poisson term represents the number of the molecules born after time $0$ which survived up to time $t$.

We next generalise the result to arbitrary  $(m_1,\ldots,m_{d})\in\Z_{\geq0}^{d}$. First, we note that the multinomial part in the convolution \eqref{multinom1} is same as in \eqref{mainJahnke}, representing the distribution of the number of molecules at time $t$ that survived from the initial  $Z(0)$ molecules at time $0$. 

Let $V_{t}$ be the vector counting by species type the number of molecules  that are born in $(0,t]$ and survived until time $t$. The random variable  $V_t$ is a sum $\sum_{j=1}^dV^j_{t}$, where $V^j_{t}\in\Z_{\geq0}^{d}$ counts  the molecules that are produced by the reaction $0\to m_jS_j$ (if present) and subsequently transformed by conversion into other molecules and/or degraded.
Observe that the random variables $V_{t}^{j}$, $1\le j\le d$, are conditionally independent given $\Ft$, since they  are generated  by independent processes of production, and each molecule degrades or is transformed independently of the others, given $\Ft$ (Remark \ref{rem:independence}).  Further, observe that
\begin{equation}
\mathcal{L}\big(Z(t)\big| Z(u)=e_j, \Ft\big) \mathop{=}\mathcal{L}\big(\textup{Multi}\big(1, \Phi(u,t)e_j\big)\big| \Ft\big),\label{mul1}
\end{equation}
which follows from the first part of the proposition and  \eqref{funda_shift}.

Using similar arguments as those applied to show \eqref{multi}, we find that
\begin{equation}
\mathcal{L}\big(V_{t}^j\big|\Ft\big) =  \Conv_{\nu\in \Theta_{m_j,d}} \mathcal{L}\left(\left.\nu\,\textup{Pois}\left(\int_{0}^{t}\widetilde\lambda_{0j}(X(u))\,g^X_{u,t}(\nu, m_j)\,du\right)\right|\Ft\right),  \label{Wk}
\end{equation}
where  $g^X_{u,t}(\nu, m_j)$ is the probability that  $\textup{Multi}\!\left(m_j,\Phi(u,t)e_j\right)$ in \eqref{mul1} takes the value $\nu$. That is, each Poisson variable counts how often the configuration $\nu$ appears at time $t$.
Using the independence of the variables $V^{j}_{t}$, the Poisson part in the convolution \eqref{multinom1} follows from  \eqref{Wk}, and the proposition is proved.
\end{proof}

The probability distribution  $g^X_{u,t}(\nu, m_{j})$ (as a function of $t$) in Proposition \ref{P2}  takes  a similar form to that in  \eqref{eq:P1general}. The  difference being that Proposition \ref{P2}  describes a  multitype process in contrast to the univariate process in \eqref{eq:P1general} and that multiple molecules might be born at the same time. 

\section{The stationary distribution and ergodicity}
\label{StatDist}

In this section we study the long term behaviour of the models fulfilling  Assumption \ref{as0}. To motivate this further consider a process $\prX$ with 2 states $\{0,1\}$ and the following simple reaction system:
$$0\ce{<=>[\lambda_1][\lambda_2]}S,\qquad \lambda_1(x,z)=1,\quad \lambda_2(x,z)=xz.$$
The species $S$ is constantly produced while degradation at time $t$ depends on whether $X(t)$ is $1$ or $0$. Intuitively, if the process $\prX$ takes the value zero for a minuscule fraction of time one might expect a transient behaviour of $\prZ$. However, this is not the case.  The model falls in the category of  Section \ref{Ex} with $m=1$, $\kappa_1(x)=1$, and $\kappa_2(x)=x$. If $Z(0)=0$ then the distribution of $Z(t)$ is
$$\mathcal{L}\big(Z(t)\big| \Ft\big)=\mathcal{L}\left(\textup{Pois}\left(\int_0^t e^{-\int_u^t X(s)ds} du\right)\Big| \Ft\right).$$
If $\prX$ is ergodic, then   the intensity of the Poisson distribution converges in distribution as $t\to\infty$. Consequently, it follows using Levy's continuity theorem \cite{durrett}, that $Z(t)$ also converges in distribution and $Z(t)$ cannot be transient.
It is also a consequence of Theorem \ref{thm:ergodicity} below.

\subsection{Structural conditions for ergodicity}

To rigorously analyse this stability phenomenon we give graphical conditions under which the models satisfying Assumption \ref{as0} are ergodic for any initial condition. 

\begin{Definition}
Suppose Assumption \ref{as0} holds for a stochastic mass-action system with stochastic environment $(\clg, \Lambda, \prX)$, and let $S_i, S_j\in\cls$.  We say that 
\begin{itemize}
 \item $S_j$ is \emph{obtainable} from $S_i$, denoted $S_i\transf S_j$, if $S_i=S_j$, or there exists a sequence of reactions $S_{i_k}\to S_{i_{k+1}}$, $1\leq k\leq n-1$, with  $i_1=i$ and $i_n=j$,
 \item $S_i$ is \emph{properly produced}, denoted $0\transf S_i$, if there exists $0\to m_{i_1}S_{i_1}$ and a sequence of reactions $S_{i_k}\to S_{i_{k+1}}$, $1\leq k\leq n-1$, with $i_n=i$,
 \item $S_i$ is \emph{properly degraded}, denoted $S_i\transf 0$, if for every species $S_j$ obtainable from $S_i$, there is a  sequence of reactions  $S_{i_k}\to S_{i_{k+1}}$, $1\leq k\leq n-1$, with $i_1=j$, and such that $S_{i_n}\to 0$.
\end{itemize}
Furthermore, we write $S_i\ntransf S_j$, $0\ntransf S_i$, and $S_i\ntransf 0$ if the relation does not hold. 
\end{Definition}

\begin{Assumption}\label{asergodic*}
The process $\prX$ is ergodic with stationary distribution $\pi$. Furthermore, assume 
\begin{enumerate}[(a)]
 \item
 every properly produced species is properly degraded, 
  \item\label{item:finite_expectation} the expectation of $\widetilde\lambda_{0j}(\cdot)$ with respect to $\pi$ is finite for any $1\leq j\leq d$, that is
 \begin{equation*}
  \sum_{x\in\Gamma}\widetilde\lambda_{0j}(x)\pi(x)=\sum_{x\in\Gamma}\|B(x)\|_1\pi(x)<+\infty.
   \end{equation*}
\end{enumerate}
\end{Assumption}
 
The process $\prXZ$ can only be  ergodic for any initial condition if the properly produced species are also properly degraded. Hence, the essential restriction is \ref{item:finite_expectation}. 

Under Assumption \ref{as0} and \ref{asergodic*}, the relation  $\transf$ induces a partition of the species set 
$\cls=\cls_1\cup \cls_2\cup\dots\cup \cls_h \cup \cls_T\cup\cls_P$, $h\in\Z_{\ge 0}$, where each $\cls_i$ is a closed strongly connected component of the graph  associated to $\transf$, the set $\cls_P$ consists of the properly produced species (hence also properly degraded), and $\cls_T$ consists of the remaining (transient) species. The transient species are either properly degraded (but not produced) or converted into other species in one or more connected components, potentially both.

As an example, consider the reaction network $0\to 2S_1$, $S_1\to S_2\to 0$, $S_3\to S_4\to S_5$, $S_5\to S_4$. The partition induced by $\transf$ is $\cls=\{S_4,S_5\}\cup\{S_1,S_2\}\cup\{S_3\}$ with $\cls_P=\{S_1,S_2\}$ and $\cls_T=\{S_3\}$.

The first main result is the following.
\begin{Theorem}\label{thm:ergodicity}
Suppose Assumption \ref{as0} and \ref{asergodic*} are fulfilled. 
 Consider a stochastic  reaction network with stochastic environment, then  $\prXZ$ is ergodic for any initial condition.
  \end{Theorem}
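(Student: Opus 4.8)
\noindent\emph{Proof plan.}
The plan is to show that for every initial state $(x_0,z_0)\in\Gamma\times\Z_{\ge 0}^d$ the law of $(X(t),Z(t))$ converges, as $t\to\infty$, to a probability measure on $\Gamma\times\Z_{\ge 0}^d$; since $\prXZ$ is a Markov chain on a countable space, any such weak limit is (using tightness, which comes for free once the limit is shown to be proper) invariant under the transition semigroup, hence a stationary distribution, which is exactly ``ergodicity for any initial condition''. By Proposition~\ref{P2}, conditionally on $\Ft$ the variable $Z(t)$ splits as an independent convolution of (i) the multinomials $\textup{Multi}(Z_i(0),\Phi(t)e_i)$, $1\le i\le d$ — write $M(t)$ for their total — and (ii) the Poissons $N_{\nu j}(t)$ with means $\mu_{\nu j}(t)=\int_0^t\widetilde\lambda_{0j}(X(u))\,g^X_{u,t}(\nu,m_j)\,du$ — write $V(t)$ for their total. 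The conditional probability generating function $s\mapsto E\!\big[\prod_i s_i^{Z_i(t)}\,\big|\,\Ft\big]$, $s\in(0,1)^d$, is a bounded continuous function of the finite family $\big(\Phi(t),(\mu_{\nu j}(t))_{j,\nu}\big)$, and $X(t)$ is $\Ft$-measurable; so it suffices to prove that $\big(X(t),\Phi(t),(\mu_{\nu j}(t))_{j,\nu}\big)$ converges in distribution to a limit with a.s.\ finite $\mu_{\nu j}(\infty)$, and then pass to the limit inside the expectation by bounded convergence.

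For the multinomial part one reads $\Phi(t)e_i$ column by column using Lemma~\ref{lem:interpretation}: $\Phi(u,t)e_i$ is the conditional sub-probability vector of the species type at time $t$ of one $S_i$-molecule present at time $u$, with defect $1-\|\Phi(u,t)e_i\|_1$ the degradation probability. If $S_i$ is properly degraded then $\|\Phi(t)e_i\|_1\to 0$ a.s.\ and $\textup{Multi}(Z_i(0),\Phi(t)e_i)\to 0$; this handles every species of $\cls_P$ and every transient species that is properly degraded. The remaining initial molecules live in species of $\cls_T$ that feed into a closed strongly connected component $\cls_\ell$, or already in some $\cls_\ell$; each such molecule is eventually absorbed into a fixed conservative component $\cls_\ell$ (or degraded), and once inside $\cls_\ell$ it performs a finite-state CTMC modulated by the ergodic process $\prX$, so that the pair (molecule position, $X$) is positive recurrent and $\Phi(t)e_i$ — hence the corresponding multinomial — converges in distribution. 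Joint convergence over $i$ is automatic since all columns come from the single matrix $\Phi(t)$; thus $M(t)$ converges in distribution, and the limit is a proper distribution because only finitely many initial molecules are involved.

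For the production part it is natural to reverse time. Substituting $v=t-u$,
\[
\mu_{\nu j}(t)=\int_0^t\widetilde\lambda_{0j}\!\big(X(t-v)\big)\,g^X_{t-v,\,t}(\nu,m_j)\,dv,
\]
and the integrand is a functional of the backward-shifted path $\{X(t-s):0\le s\le v\}$. Since $\prX$ is ergodic, for each fixed $S$ the path $\{X(t-s):0\le s\le S\}$ converges in distribution to the time reversal of the stationary process, so the truncated integral $\int_0^S(\cdots)\,dv$ converges in distribution as $t\to\infty$. It then remains to show that the tail $\int_S^t(\cdots)\,dv$ is, uniformly in $t$, negligible as $S\to\infty$. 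Here one uses that by the Markov property of $\prX$ the quantity $E\big[g^X_{t-v,t}(\nu,m_j)\,\big|\,X(t-v)=y\big]$ depends only on $(y,v)$ and is dominated by $m_j$ times the survival probability of one $S_j$-molecule over a window of length $v$; together with Assumption~\ref{asergodic*}\eqref{item:finite_expectation} and the fact that proper degradation of the species produced by $0\to m_jS_j$ makes the expected number of surviving ``old'' molecules summable in their age, this gives $\lim_{S\to\infty}\sup_{t\ge S}E\big[\int_S^t\widetilde\lambda_{0j}(X(t-v))\,g^X_{t-v,t}(\nu,m_j)\,dv\big]=0$. Hence $\mu_{\nu j}(t)\todist{}\mu_{\nu j}(\infty)<\infty$ a.s., so $V(t)$ converges in distribution and is tight. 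Assembling the three pieces — and noting that all the limiting objects are, up to negligible terms, functionals of the backward path of $\prX$, so that $X(t)$, $\Phi(t)$ and the $\mu_{\nu j}(t)$ converge jointly — yields weak convergence of $(X(t),Z(t))$ from any initial condition, which is the claim.

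I expect the crux to be the uniform tail estimate for $\mu_{\nu j}(t)$: one must control the cumulative contribution of production events that occurred long ago, when $\prX$ was not yet close to $\pi$, and Assumption~\ref{asergodic*}\eqref{item:finite_expectation} — finiteness of $E_\pi[\widetilde\lambda_{0j}]$ — is precisely what makes the associated Markov-modulated $M/M/\infty$-type quantity $\sup_{t}E\big[\sum_{\nu}\mu_{\nu j}(t)\big]$ finite; establishing this bound rigorously (carefully combining the ergodic convergence of $\prX$ with the finite expected lifetime of a molecule guaranteed by proper degradation) is the technical heart. A secondary point requiring care is the convergence in distribution of $\Phi(t)e_i$ for species in a closed conservative component, i.e.\ positive recurrence of a finite-state CTMC driven by the ergodic environment $\prX$.
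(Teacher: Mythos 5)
Your overall architecture matches the paper's: reduce ergodicity to control of the conditional law from Proposition~\ref{P2}, kill the multinomial part via proper degradation, and control the accumulated Poisson intensity $W(t)$ (the paper does this through Lemma~\ref{cor:ergodic}, which shows tightness of $\prW$ is \emph{equivalent} to ergodicity because a tight CTMC has a unique accumulation point, and then proves tightness of $\prW$). The genuine gap sits exactly where you place the ``technical heart'': you assert the uniform tail bound $\lim_{S\to\infty}\sup_{t\ge S}E\big[\int_S^t\widetilde\lambda_{0j}(X(t-v))\,g^X_{t-v,t}(\nu,m_j)\,dv\big]=0$ but supply no mechanism, and the two ingredients you invoke (``finite expected lifetime of a molecule'' and Assumption~\ref{asergodic*}\ref{item:finite_expectation}) do not combine by themselves: $\widetilde\lambda_{0j}(X(t-v))$ may be unbounded when $\Gamma$ is infinite and is correlated with the survival probability $\|\Phi(t-v,t)e_j\|_1$ through the very same path of $\prX$, so the expectation does not factor, and finiteness of the expected lifetime is itself something that must be proved. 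The missing idea is the renewal decomposition at the successive return times $\tau^x_k$ of $\prX$ to a fixed state $x$, which produces i.i.d.\ blocks $(C^x_k,D^x_k)$ as in \eqref{eq:def_C} and for which Lemma~\ref{lem:norm_1_implications} supplies exactly the two estimates needed: a contraction $E(\|C^x_0J\|_1)<1$ over $\alpha$ returns (part (a)) and a finite block cost $E(\|D^x_0\|_1)<\infty$ (part (c), whose proof is itself a renewal--reward argument converting $E_\pi[\widetilde\lambda_{0j}]<\infty$ into finiteness of the integral over one excursion). Summing the resulting geometric series bounds $\sup_t E(\|W(\tau^x(t))\|_1)$, and the remaining piece $W(t)-W(\tau^x(t))$ is handled by tightness of the age process (Lemma~\ref{int-tight}). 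Without this block structure your tail estimate has no proof.

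A secondary issue: for species in a closed strongly connected component you claim $\Phi(t)e_i$ converges in distribution because ``the pair (molecule position, $X$) is positive recurrent''. Positive recurrence gives convergence of the \emph{unconditional} law of the molecule's type, not of the random vector $\Phi(t)e_i$, which is the conditional law given $\Ft$. The paper obtains the latter in Lemma~\ref{lem:R} by exchanging the order of the i.i.d.\ blocks, replacing the forward product of the $C^x_k$ (which need not converge a.s.) by an equidistributed backward product that does converge by the theory of infinite products of random matrices; the same exchange makes $\widehat W^x(t)$ monotone in $t$ and hence a.s.\ convergent. For ergodicity alone this second point is not fatal, since the multinomial part is bounded by $\|Z(0)\|_1$ and hence automatically tight, and tightness plus uniqueness of the accumulation point of a CTMC suffices; but your plan as written aims at full weak convergence of $(X(t),Z(t))$ and therefore needs it.
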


By `ergodic for any initial condition' we understand the existence of a unique irreducible component of $\Gamma\times\Z_{\ge 0}^d$, that is reach with probability one, for any initial condition.  The process restricted to the irreducible component is positive recurrent (or ergodic, as component is  irreducible). Since $\Gamma$ by assumption is irreducible, the possible division of $\Gamma\times\Z_{\ge 0}^d$ is induced by the structure of the reaction network.

We prove the theorem by making use of properties of the fundamental matrix solution $\Phi(\cdot)$. The proof is in Section \ref{sec:ergodicity} and draws on material in Section \ref{sec:assumps}.

\subsection{Connecting  Assumption \ref{asergodic*} with properties of $\Phi(\cdot)$}
\label{sec:assumps}

Here we highlight some connections between properties of the matrix $\Phi(\cdot)$ and Assumption~\ref{as0} and \ref{asergodic*}. 

If the state space $\Gamma$ consists of just one element then the matrices $A_X$ and $B_X$ are constant, deterministic matrices, and the environment is not stochastic. Moreover, if $m_j\in\{0,1\}$ for all $1\leq j\leq d$, then the process $\prZ$ is a CTMC associated with a mono-molecular stochastic reaction network  \cite{jahnke2007solving}. Questions about ergodicity  and the form of the stationary distribution have been fully answered in this case \cite{jahnke2007solving}.  

Therefore, assume $\Gamma$ has at least two elements and let $x\in\Gamma$. Define $\tau^x_{0}\geq 0$ to be the first time $\prX$ hits the state $x$, and let $T^x_{0}$ denote the time spent in $x$. If $X(0)=x$, then $\tau^x_{0}=0$. Recursively define for $k\ge 1$,
\begin{equation*}
\tau^x_{k}=\inf\{t>\tau^x_{k-1}+T^x_{k-1}\colon X(t)=x\},\s\text{ and }\s T^x_{k}=\inf\{t>\tau^x_{k}\colon X(t)\neq x\}-\tau^x_{k}.
\end{equation*}
The time between the $(k-1)$-th and $k$-th visits to  $x$ is $\tau^x_k-\tau^x_{k-1}$ for $k\ge 1$. By the Markov property the random variables $\tau^x_k-\tau^x_{k-1}$, $k\ge 1$, are independent and identically distributed. Moreover, $\tau^x_0$ is independent of $\tau^x_k-\tau^x_{k-1}$ for all $k\ge 1$. If $\prX$ is ergodic then the waiting times $\tau^x_k$, $k\ge 1$, have positive finite expectation (recall,  $\Gamma$ is irreducible). If $X(0)\neq x$, then the same holds for $\tau^x_0$; otherwise $\tau^x_0=0$.

We state here a technical implication of Assumption~\ref{as0} and \ref{asergodic*} in terms of $\Phi(\cdot)$.

\begin{Lemma}\label{lem:norm_1_implications}
 Suppose Assumptions \ref{as0} and \ref{asergodic*} hold and that $\Gamma$ contains at least two elements. Then, there exists $\alpha\in\Z_{>0}$ such that for all $x\in\Gamma$,
 \begin{enumerate}[(a)]
\item\label{item:norm<1} $\displaystyle E\Big(\max_{\substack{1\leq i\leq d\\ S_i\transf 0}}\left\|\Phi(\tau^x_0,\tau^x_{\alpha})e_i\right\|_{1}\Big)<1$,
\item\label{item:irreducibility}  $\displaystyle E\Big(\min_{\substack{1\leq i,j\leq d\\S_i\transf S_j}}\Phi_{ij}(\tau^x_0,\tau^x_{\alpha})\Big)>0$,
\item\label{item:production_bounded} $\displaystyle E\left(\int_{\tau^x_0}^{\tau^x_{\alpha}}\left\| \Phi(u,\tau^x_{\alpha})B_{X}(u)\right\|_{1} \,du\right)<+\infty$.
\end{enumerate}
\end{Lemma}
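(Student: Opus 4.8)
\textbf{Proof plan for Lemma \ref{lem:norm_1_implications}.}

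The plan is to reduce all three statements to properties of the transition probabilities $P_X^{(i,j)}$ identified in Lemma \ref{lem:interpretation}, and then exploit ergodicity of $\prX$ to obtain the uniform-in-$x$ control. By Lemma \ref{lem:interpretation}\eqref{item:probability}, $\Phi_{ij}(\tau^x_0,\tau^x_\alpha) = P_X^{(j,i)}(\tau^x_0,\tau^x_\alpha)$ is the conditional probability (given $\mathcal F^X_{\tau^x_\alpha}$) that a molecule of species $S_j$ present at time $\tau^x_0$ has been converted to $S_i$ by time $\tau^x_\alpha$, and $\|\Phi(\tau^x_0,\tau^x_\alpha)e_i\|_1 = \sum_j P_X^{(i,j)}(\tau^x_0,\tau^x_\alpha)$ is the conditional probability that a molecule of species $S_i$ present at $\tau^x_0$ has not yet been degraded by $\tau^x_\alpha$. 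These are bounded by $1$ (part \eqref{item:norm_1_leq_1} of Lemma \ref{lem:interpretation}), which is exactly claim \eqref{item:norm_1_implications}(c)'s integrand bound in disguise and gives the easy half of \eqref{item:norm<1}; the content is that the bound is \emph{strict in expectation} once $\alpha$ is chosen large enough.

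For \eqref{item:norm<1}: fix a reference state $x_0\in\Gamma$ at which $\prX$ spends a positive-probability sojourn of positive length; since $\Gamma$ is irreducible, from $x_0$ any sojourn pattern over $\alpha$ excursions occurs with positive probability. For a properly degraded species $S_i$, by definition every species obtainable from $S_i$ can reach a degradation reaction through a finite chain of conversions; hence there is a fixed time horizon $t^\ast$ and a constant $p>0$ (depending only on the rate functions evaluated at states visited along a suitable deterministic excursion pattern of $\prX$) such that, conditional on that pattern occurring within the first $\alpha$ returns to $x_0$, a molecule of $S_i$ present at $\tau^{x_0}_0$ is degraded by $\tau^{x_0}_\alpha$ with conditional probability at least $p$ — uniformly over the finitely many properly degraded $i$. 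Choosing $\alpha$ large enough that the good pattern has probability bounded below, we get $E(\max_i\|\Phi(\tau^{x_0}_0,\tau^{x_0}_\alpha)e_i\|_1)\le 1-\delta$ for some $\delta>0$. To upgrade from $x_0$ to an arbitrary $x$, split at the first hitting time of $x_0$ after $\tau^x_\alpha$-type construction, or more cleanly: use the cocycle identity \eqref{eq:phi_tu}, $\Phi(\tau^x_0,\tau^x_\alpha)=\Phi(s,\tau^x_\alpha)\Phi(\tau^x_0,s)$ where $s$ is an intermediate time at which $\prX=x_0$, together with the submultiplicativity $\|\Phi(s,\tau^x_\alpha)\Phi(\tau^x_0,s)e_i\|_1\le \|\Phi(\tau^x_0,s)e_i\|_1\le 1$ and conditioning, so that the gain $\delta$ obtained over an $x_0$-block survives; one must take $\alpha$ uniformly over $\Gamma$, which works because $\Gamma$ need not be finite but the argument only uses \emph{one} fixed reference state and the finite-expectation of return times. (If $\Gamma$ is infinite this uniformity needs a short separate argument; I expect this to be the main obstacle.)

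For \eqref{item:irreducibility}: symmetrically, if $S_i\transf S_j$ there is a finite conversion chain from $S_i$ to $S_j$, so along a good excursion pattern of $\prX$ the conditional probability $P_X^{(i,j)}(\tau^x_0,\tau^x_\alpha)$ is bounded below by a positive constant; taking the minimum over the finitely many pairs with $S_i\transf S_j$ and the expectation (the good pattern has positive probability) gives a strictly positive lower bound, again after reducing to the reference state $x_0$ by the cocycle identity. For \eqref{item:production_bounded}: by \eqref{eq:convergence}, $\int_{\tau^x_0}^{\tau^x_\alpha}\|\Phi(u,\tau^x_\alpha)B_X(u)\|_1\,du \le \int_{\tau^x_0}^{\tau^x_\alpha}\|B_X(u)\|_1\,du$ since $\|\Phi(u,\tau^x_\alpha)\|_1\le 1$ (column sums bounded by $1$, from the sub-generator structure noted after \eqref{eq:BX} and Lemma \ref{lem:interpretation}(c)); then take expectations. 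The right-hand side is $E\big(\int_{\tau^x_0}^{\tau^x_\alpha}\|B(X(u))\|_1\,du\big)$, which by the ergodic theorem / renewal-reward over the $\alpha$ excursions equals $\alpha\, E(\tau^{x}_1-\tau^x_0)\sum_{y\in\Gamma}\|B(y)\|_1\pi(y)$ up to the initial segment, finite by Assumption \ref{asergodic*}\eqref{item:finite_expectation} and finiteness of the expected return time. The initial segment $[\tau^x_0,\tau^x_1)$ contributes $E\big(\int_0^{T^x_0}\|B(x)\|_1\,ds\big) = \|B(x)\|_1 E(T^x_0)<\infty$. This concludes all three parts.
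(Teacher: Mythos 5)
Your part (c) is essentially the paper's argument (bound $\|\Phi(u,\tau^x_\alpha)B_X(u)\|_1\le\|B_X(u)\|_1$ via Lemma \ref{lem:interpretation}\ref{item:norm_1_leq_1}, then renewal--reward together with Assumption \ref{asergodic*}\ref{item:finite_expectation}), and reading $\Phi$ probabilistically through Lemma \ref{lem:interpretation} is also the paper's starting point for (a) and (b). The problem is the step you yourself flag: the existence of a \emph{single} $\alpha$ that works for \emph{every} $x\in\Gamma$, with $\Gamma$ possibly infinite. Your route --- establish the contraction over a block of returns to one reference state $x_0$ and transport it to general $x$ by the cocycle identity --- leaves exactly this open: to exploit an $x_0$-block inside the window $[\tau^x_0,\tau^x_\alpha]$ you must guarantee that, with positive probability, the required number of returns to $x_0$ is completed before the $\alpha$-th return to $x$, and a priori the number of $x$-returns elapsing during an $x_0$-block has no bound uniform in $x$. (Also, as written, the display $\|\Phi(s,\tau^x_\alpha)\Phi(\tau^x_0,s)e_i\|_1\le\|\Phi(\tau^x_0,s)e_i\|_1\le 1$ only reproduces the trivial bound $\le 1$; to retain the gain you must insert the $x_0$-block as the middle factor of the cocycle product and use that the vector it acts on is supported on properly degraded species.)

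The paper closes this gap without any reference state, and that is the one idea missing from your plan. For a degradation path $S_{i_\ell}=S_{j_0}\to\cdots\to S_{j_{\alpha_\ell}}\to 0$ pick, for each edge, a state $x_k\in\Gamma$ with $\widetilde\lambda_{j_kj_{k+1}}(x_k)>0$; these milestones depend only on the reaction network, not on $x$. By irreducibility one can choose a trajectory of $\prX$ that visits $x_0,\dots,x_{\alpha_\ell}$ in that order and, after contracting loops, returns to $x$ at most once between consecutive milestones; hence the whole pattern occurs with positive probability within $\alpha_\ell$ returns to $x$, for every $x$ simultaneously, and $\alpha$ can be taken to be a sum of reaction-path lengths --- a purely combinatorial quantity. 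The same device gives (b). One further point your sketch compresses: for the expectation of the \emph{maximum} in (a) to be strictly less than $1$ you need a single event of positive probability on which \emph{all} properly degraded species are degraded; the paper obtains it by concatenation, degrading one species per block of $\alpha_\ell$ returns and taking $\alpha=\sum_\ell\alpha_\ell$. Your ``suitable deterministic excursion pattern \dots uniformly over the finitely many properly degraded $i$'' gestures at this but should be made explicit.
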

\begin{proof} \ref{item:norm<1}
 For simplicity, let $S_{i_1}, S_{i_2}, \dots, S_{i_{\tilde{d}}}$, $\tilde d\ge 0$, be the properly degraded species. By Lemma~\ref{lem:interpretation}\ref{item:degradation_prob} we have $\|\Phi(\tau^x_{k},\tau^x_{k+k'})e_i\|_{1}\le 1$ for any $k,k'\in\Z_{\geq0}$, $1\leq i\leq d$, and $x\in\Gamma$. Hence it suffices to show that there exists $\alpha\in\Z_{>0}$ such that for all $x\in\Gamma$,
 \begin{equation}\label{eq:pos_probab_<1}
 P\Big(\max_{1\leq \ell\leq \tilde{d}}\|\Phi(\tau^x_0,\tau^x_{\alpha})e_{i_\ell}\|_1<1\Big)>0,
\end{equation}
For any $1\leq \ell\leq \tilde{d}$, there exists a directed path of reactions $S_{i_\ell}=S_{j_0}\to S_{j_1}\to \dots \to S_{j_{\alpha_{\ell}}} \to0$ $(=S_{j_{\alpha_\ell}+1}$ for convenience)  
with $\alpha_{\ell}\in\Z_{\geq0}$. For any $0\leq k\leq \alpha_{\ell}$, let $x_k\in\Gamma$ be such that $\lambda_{j_k,j_{k+1}}(x_k)>0$, and $x\in \Gamma$. Since $\Gamma$ is irreducible, with positive probability the process $\prX$ visits the states $x_0,x_1,\ldots,x_{\alpha_{\ell}}$ in that order, between time $\tau^x_0$ and $\tau^x_{\alpha_{\ell}}$ (if there is more than one visit to $x$  between visits to $x_{k-1}$ and $x_k$, then the path could be contracted). 
Hence, for any $1\leq \ell\leq \tilde{d}$ and any $x\in\Gamma$, there is  positive probability that a molecule of species $S_{i_\ell}$, present at time $\tau_0^x$, is degraded by time $\tau_{\alpha_\ell}^x$. Therefore, it follows from Lemma~\ref{lem:interpretation}\ref{item:probability} that
$$P\left(\|\Phi(\tau^x_0,\tau^x_{\alpha})e_{i_\ell}\|_{1}<1\right)>0\quad\text{for all }\quad\alpha\ge \alpha_{\ell}\text{ and all } x\in\Gamma.$$
To prove \eqref{eq:pos_probab_<1}, we  need to show that molecules of different properly degraded species can \emph{all} be degraded by the same return time to $x$ with positive probability.  Define
$$\widetilde{\alpha}=\sum_{1\leq \ell\leq \tilde{d}} \alpha_{\ell}.$$
Assume that a molecule of every properly degraded species is present at time $\tau_0^x$. With positive probability, the molecule of species $S_{i_1}$ is degraded by time $\tau^x_{\alpha_{1}}$, while the other molecules do not change. Then, with positive probability the molecule of species $S_{i_2}$ is degraded by time $\tau^x_{\alpha_{1}+\alpha_{2}}$, while the molecules of the other species do not change, and so on. In conclusion, with positive probability all the molecules of properly degraded species are degraded by time $\tau^x_{\widetilde{\alpha}}$, and \eqref{eq:pos_probab_<1} holds for any $\alpha\geq \widetilde{\alpha}$.

\ref{item:irreducibility} Assume $S_i\transf S_j$. By a similar argument as before, there exists $\alpha_{ij}\in\Z_{>0}$ such that a molecule of species $S_i$, present at time $\tau_0^x$, is transformed into a molecule of species $S_j$ by time $\tau_{\alpha_{ij}}^x$ with positive probability $\Phi_{ij}(\tau^x_0,\tau^x_{\alpha_{ij}})$, for any $x\in\Gamma$. Similarly as before, by choosing
$$\widehat\alpha=\sum_{\substack{1\leq i,j\leq d \\ S_i\transf S_j}}\alpha_{ij}$$
then the desired holds for  all $\alpha\geq\widehat\alpha$. Hence, \ref{item:norm<1} and \ref{item:irreducibility} hold at the same time for $\alpha\geq \max(\widetilde{\alpha}, \widehat{\alpha})$.

\ref{item:production_bounded} It follows from Lemma~\ref{lem:interpretation}\ref{item:norm_1_leq_1} that for any $x\in\Gamma$
 $$E\left(\int_{\tau^x_0}^{\tau^x_{\alpha}}\big\|\Phi(u,\tau^x_{\alpha})B_{X}(u)\big\|_{1} \,du\right)\leq E\left(\int_{\tau^x_0}^{\tau^x_{\alpha}}\big\|B_{X}(u)\big\|_{1} \,du\right)= \alpha\, E\left(\int_{\tau^x_0}^{\tau^x_1}\big\|B_{X}(u)\big\|_{1} \,du\right).$$
Hence, it suffices to show that the expectation of the right-hand side is bounded. By the definition of $B_X(\cdot)$, it suffices to show that
 $$E\left(\int_{\tau^x_0}^{\tau^x_1}\widetilde{\lambda}_{0i}(X(u)) \,du\right)<\infty$$
 for all $i$ such that $m_i\neq 0$. By ergodicity of $\prX$ and non-negativity of $\widetilde{\lambda}_{0i}(\cdot)$,
 $$ \lim_{t\to\infty}\frac{1}{t}\int_{0}^t\widetilde{\lambda}_{0i}(X(u))\,du=\sum_{x\in\Gamma}\widetilde{\lambda}_{0i}(x)\pi(x)\qquad\text{a.s.,}$$
 where $\pi$ denotes the stationary distribution of $\prX$. Since $\Gamma$ is irreducible and contains at least  two elements, $\lim_{n\to\infty} \tau^x_n=\infty$ a.s. It follows that
 $$\lim_{n\to\infty}\frac{1}{\tau^x_n} \int_0^{\tau^x_n}\widetilde{\lambda}_{0i}(X(u))\,du=\sum_{x\in\Gamma}\widetilde{\lambda}_{0i}(x)\pi(x)\qquad\text{a.s.}$$
Since $\tau^x_0<\infty$ a.s., then  by  the strong law of large numbers,   it holds with probability one that
   \begin{align*}
 \lim_{n\to\infty}\frac{1}{\tau^x_n} \int_0^{\tau^x_n}\widetilde{\lambda}_{0i}(X(u))\,du & =
 \lim_{n\to\infty}\frac{1}{\tau^x_n-\tau^x_0} \int_{\tau^x_0}^{\tau^x_n}\widetilde{\lambda}_{0i}(X(u))\,du\\
 &=\lim_{n\to\infty}\frac{n}{\sum_{k=1}^n(\tau^x_k-\tau^x_{k-1})} \frac{1}{n}\sum_{k=1}^n\int_{\tau^x_{k-1}}^{\tau^x_k}\widetilde{\lambda}_{0i}(X(u))\,du \\ 
 &=\frac{1}{E(\tau^x_1-\tau^x_0)}E\left(\int_{\tau^x_0}^{\tau^x_1}\widetilde{\lambda}_{0i}(X(u))\,du\right).
  \end{align*}
Consequently
 $$E\left(\int_{\tau^x_0}^{\tau^x_1} \widetilde{\lambda}_{0i}(X(u))\,du\right)=E(\tau^x_1-\tau^x_0) \sum_{x\in\Gamma}\widetilde{\lambda}_{0i}(x)\pi(x).$$
 The proof is concluded by the fact that the right-hand side is bounded by Assumption \ref{asergodic*}. 
\end{proof}

To gain intuition about the role played by $\alpha$ in Lemma~\ref{lem:norm_1_implications}, we give an example of a mass-action system with stochastic environment and find an integer $\alpha$ satisfying Lemma~\ref{lem:norm_1_implications}(a).

\begin{Example}
Let $\prX$ be a Markov process with two states $0$ and $1$ such that  $q_{10}$ is the transition rate from 1 to 0, and $q_{01}$ that from 0 to 1. Consider the following stochastic mass-action system with stochastic environment:
$$0\ce{->[\lambda_1]} S_1\ce{->[\lambda_2]} S_2\ce{->[\lambda_3]} S_3\ce{->[\lambda_4]} 0$$
with transition rates
$$\begin{array}{ll}
 \lambda_1(x,z)=\kappa_1 & \lambda_2(x,z)=\kappa_2xz_1 \\
 \lambda_3(x,z)=\kappa_3(1-x)z_2 & \lambda_4(x,z)=\kappa_4xz_3 \\
\end{array}
$$
for positive constants $\kappa_1$, $\kappa_2$, $\kappa_3$, and $\kappa_4$. Assumption \ref{as0} and \ref{asergodic*} hold, and all species are properly produced and properly degraded. Hence Lemma~\ref{lem:norm_1_implications} applies. The conclusions of the lemma hold for $\alpha=2$, but not for $\alpha=1$. It follows from the simple observation that a molecule of species $S_1$, present at time 0, can only be  removed from the system  after the process $\prX$ has visited the state 1 twice: $S_1$ can only be transformed into $S_2$ when $X(t)=1$, $S_2$ can only be transformed into $S_3$ when $X(t)=0$, and finally $S_3$ can be degraded only when $X(t)=1$. Using  Lemma~\ref{lem:interpretation}, it follows that
$$\left\|\Phi(\tau^1_0,\tau^1_1)e_i\right\|_1=\sum_{j=1}^3 P_X^{(i,j)}(\tau^1_0,\tau^1_1)=1\quad\text{a.s.,}\quad
\quad\left\|\Phi(\tau^1_0,\tau^1_2)e_i\right\|_1=\sum_{j=1}^3 P_X^{(i,j)}(\tau^1_0,\tau^1_2)<1\quad\text{a.s.}$$
\end{Example}

\subsection{A recurrence relation}

A random variable $V$ with values in $\R^n$ satisfies a Stochastic Recurrence Equation (SRE) characterised by a pair of random variables  $(Q_1,Q_2)$ with values in $\R^{m\times n}\times\R^m$ if  the following holds
\begin{equation}
V\,\,\sim\,\, Q_{1}V+Q_{2},\qquad V\,\,\bigCI \,\,(Q_{1},Q_{2}).\label{defstochrec}
\end{equation}
Furthermore, if any two random variables  satisfying \eqref{defstochrec} are identical in law, then this law is said to be the weakly unique solution of \eqref{defstochrec}. This type of equation appears in modelling perpetuities which are  applicable in financial mathematics \cite{buraczewski2016stochastic}. General existence and uniqueness conditions for the law of $V$ are given in \cite{buraczewski2016stochastic}, where other applications are also mentioned.

Define
\begin{equation}\label{eq:hattau}
n_t^x=\sup\{n\colon\tau^x_{n}\le t\},\quad \text{and}\quad \tau^x(t)=\tau^x_{n_t^x},
\end{equation}
where $n_t^x$ is the number of times  $\prX$ visits  $x\in\Gamma$ before time $t$.
For  $k\in\Z_{\geq0}$, define
\begin{align}\label{eq:def_C}
C^x_k&=\Phi(\tau^x_k,\tau^x_{k+1}),\quad\text{and}\quad
D^x_k=\int_{\tau^x_k}^{\tau^x_{k+1} }\Phi(u,\tau^x_{k+1} )B_{X}(u)\,du.
\end{align}
By the strong Markov property, $(C^x_k,D^x_k)_{k\ge 0}$, is a sequence of independent, identically distributed random variables. For notational convenience, let
$$C^x_{-1}=\Phi(0,\tau^x_0),\quad\text{and}\quad D^x_{-1}=\int_{0}^{\tau^x_0 }\Phi(u,\tau^x_0 )B_{X}(u)\,du.$$
Then, using \eqref{eq:phi_tu}, we obtain
\begin{align}\label{eq:rel_C_Phi}
\Phi(t)&=\Phi(\tau^x(t),t) \prod_{k=-1}^{n_t^x-1}C^x_k, \\
\label{eq:rel_D_W}
W(t)&=\int_{\tau^x(t)}^t \Phi(u,t)B_X(u)\,du\,+ \Phi(\tau^x(t),t)\sum_{k=-1}^{n_t^x-1} \left(\prod_{i=k+1}^{n_t^x-1} C^x_i\right)D^x_k.
\end{align}

The proof of the next two lemmas can be found in the Appendix.  In the lemma below the order of the matrices in the product of $\Phi^x_\infty$ and $W^x_\infty$ is reversed compared to \eqref{eq:rel_C_Phi} and \eqref{eq:rel_D_W}. In general we cannot assure the existence of the random variables $\prod_{k=0}^{\infty}C^x_k$ and $\sum_{k=0}^{\infty} (\prod_{i=0}^{k-1} C^x_i)D^x_k$, intended as strong convergence limits. More details can be found in the proof of Lemma~\ref{lem:R}, and from \cite{infinite_matrix_product}.

\begin{Lemma}\label{lem:R}
 Suppose Assumption~\ref{as0} and \ref{asergodic*} hold, and $\cls_T=\emptyset$. Furthermore, assume $\Gamma$ is not a singleton set, and $X(0)=x\in\Gamma$. Then, 
 \begin{equation}\label{eq:def_Phi_W_infty}
 \Phi^x_\infty=\prod_{k=\infty}^{0}C^x_k\qquad\text{and}\qquad W^x_\infty=\sum_{k=0}^{\infty} \Big(\prod_{i=k-1}^{0} C^x_i\Big)D^x_k
\end{equation}
 exist a.s. Moreover,
 \begin{equation}\label{eq:goal}
  \big(\Phi(\tau^x(t)),W(\tau^x(t))\big)\todist{}(\Phi^x_\infty,W^x_\infty)\quad \text{for}\quad t\to\infty,
 \end{equation}
$ E(\|W^x_\infty\|_1)<\infty,$
 and for all continuity sets  $A$ of $(\Phi^x_\infty,W^x_\infty)$,  
 \begin{equation}\label{eq:conditional_limit}
  \lim_{t\to\infty}P\Big(\big(\Phi(\tau^x(t)), W(\tau^x(t))\big)\in A\,|\, X(t)=x\Big)=P\big((\Phi^x_\infty,W^x_\infty)\in A\big).
 \end{equation}
 The columns of the matrix $\Phi^x_\infty$ corresponding to the species in $\cls_i$, $i=1,\ldots,h$, are identical.
\end{Lemma}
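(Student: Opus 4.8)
The plan is to combine the i.i.d.\ structure of $(C^x_k,D^x_k)_{k\ge0}$ with the estimates of Lemma~\ref{lem:norm_1_implications}, a reversal identity for i.i.d.\ sequences, and the key renewal theorem. Write $\chi_k$ for the path of $\prX$ on $[\tau^x_k,\tau^x_{k+1}]$ (an i.i.d.\ sequence, with $C^x_k,D^x_k$ functions of $\chi_k$), and $\Pi_n:=C^x_0C^x_1\cdots C^x_{n-1}$ (with $\Pi_0=I_d$). First reorder the species so that the closed components $\cls_1,\dots,\cls_h$ and the set $\cls_P$ of properly produced species appear as consecutive blocks. By Lemma~\ref{lem:interpretation}\ref{item:probability} an entry of $\Phi(u,t)$ joining two different such blocks would be the probability of an impossible conversion, so $\Phi(u,t)$, every $C^x_k$ and every $\Pi_n$ are block diagonal; moreover $B(x)$, hence every $D^x_k$, is supported on $\cls_P$ (a directly produced species is properly produced), and $\cls_P\subseteq\cls_D:=\{S_i:S_i\transf0\}$ by Assumption~\ref{asergodic*}(a). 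Fix $\alpha$ and put $\rho:=E\big(\max_{S_i\transf0}\|\Phi(\tau^x_0,\tau^x_\alpha)e_i\|_1\big)<1$ from Lemma~\ref{lem:norm_1_implications}\ref{item:norm<1}. Since $X(0)=x$ gives $\tau^x_0=0$, reversing the i.i.d.\ sequence shows $C^x_0\cdots C^x_{\alpha-1}\sim C^x_{\alpha-1}\cdots C^x_0=\Phi(\tau^x_0,\tau^x_\alpha)$. For a block $\cls_\ell\subseteq\cls_D$, each column $C^x_ne_j$ ($S_j\in\cls_\ell$) is a sub-probability vector supported on $\cls_\ell$, so $\max_{S_j\in\cls_\ell}\|\Pi_{n+1}e_j\|_1\le\max_{S_i\in\cls_\ell}\|\Pi_ne_i\|_1$; this monotonicity with submultiplicativity of $\|\cdot\|_1$, the reversal identity and independence of the $\alpha$-blocks gives $E\big(\max_{S_j\in\cls_\ell}\|\Pi_ne_j\|_1\big)\le\rho^{\lfloor n/\alpha\rfloor}$, whence the $\cls_\ell$-block of $\Pi_n$ tends to $0$ a.s. For a \emph{conserved} block $\cls_\ell$ (one containing no degradation) the $\cls_\ell$-block of $\Pi_n$ is a product of i.i.d.\ random column-stochastic matrices which, by Lemma~\ref{lem:norm_1_implications}\ref{item:irreducibility}, is strictly positive over one $\alpha$-block with positive probability; running two $\cls_\ell$-molecules in the same environment — conditionally independent by Remark~\ref{rem:independence} and coupled to stick once they coincide — they coincide within each $\alpha$-block with probability at least $E\big(\min_{S_i\transf S_j}\Phi_{ij}(\tau^x_0,\tau^x_\alpha)\big)>0$, so $\mathrm{diam}_n:=\max_{S_i,S_j\in\cls_\ell}\|\Pi_n(e_i-e_j)\|_1$ is non-increasing with $E(\mathrm{diam}_n)$ decaying geometrically; hence $\sum_n\mathrm{diam}_n<\infty$ a.s., each $\cls_\ell$-column of $\Pi_n$ is a.s.\ Cauchy (as $\|\Pi_{n+1}e_i-\Pi_ne_i\|_1\le\mathrm{diam}_n$), and all of them converge to one common limit. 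Thus $\Pi_n\to\Phi^x_\infty:=\prod_{k=\infty}^{0}C^x_k$ a.s., with $\cls_D$-blocks null and conserved blocks rank one; in particular the columns of $\Phi^x_\infty$ indexed by any $\cls_i$ coincide.

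For $W^x_\infty$: since $D^x_k$ lives on $\cls_P\subseteq\cls_D$, $\|\Pi_kD^x_k\|_1\le\|D^x_k\|_1\max_{S_j\in\cls_P}\|\Pi_ke_j\|_1$; here $\Pi_k$ (a function of $\chi_0,\dots,\chi_{k-1}$) is independent of $D^x_k$ (a function of $\chi_k$), and $E\|D^x_0\|_1\le E\int_{\tau^x_0}^{\tau^x_1}\|B_X(u)\|_1\,du=E(\tau^x_1-\tau^x_0)\sum_{x\in\Gamma}\|B(x)\|_1\pi(x)<\infty$ by the computation inside the proof of Lemma~\ref{lem:norm_1_implications}\ref{item:production_bounded} and Assumption~\ref{asergodic*}\ref{item:finite_expectation}. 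Hence $E\sum_k\|\Pi_kD^x_k\|_1\le E\|D^x_0\|_1\sum_k\rho^{\lfloor k/\alpha\rfloor}<\infty$, so $W^x_\infty=\sum_{k\ge0}(\prod_{i=k-1}^{0}C^x_i)D^x_k=\sum_k\Pi_kD^x_k$ converges absolutely a.s.\ and $E\|W^x_\infty\|_1<\infty$, which also gives \eqref{eq:def_Phi_W_infty}.

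For the convergence statements: with $\tau^x_0=0$, equations \eqref{eq:rel_C_Phi}--\eqref{eq:rel_D_W} at $t=\tau^x_n$ read $\Phi(\tau^x_n)=C^x_{n-1}\cdots C^x_0$ and $W(\tau^x_n)=\sum_{k=0}^{n-1}(\prod_{i=k+1}^{n-1}C^x_i)D^x_k$; reversing $(\chi_0,\dots,\chi_{n-1})$ is measure preserving, fixes $\tau^x_n$, and sends $(\Phi(\tau^x_n),W(\tau^x_n))$ to $(\Pi_n,\widehat W_n)$ with $\widehat W_n=\sum_{k=0}^{n-1}\Pi_kD^x_k$, so $(\Phi(\tau^x_n),W(\tau^x_n),\tau^x_n)\sim(\Pi_n,\widehat W_n,\tau^x_n)$ and, by the previous steps, $(\Phi(\tau^x_n),W(\tau^x_n))\todist{}(\Phi^x_\infty,W^x_\infty)$. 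For a continuity set $A$ of $(\Phi^x_\infty,W^x_\infty)$, conditioning on $\chi_0,\dots,\chi_{m-1}$ and using that $|\chi_m|$ and the $\textup{Exp}(q_x)$ holding time of $\prX$ at $x$ inside $\chi_m$ are independent of them ($q_x$ the exit rate at $x$), one obtains after applying the reversal inside the expectations
\begin{align*}
P\big((\Phi(\tau^x(t)),W(\tau^x(t)))\in A\big)&=\sum_{m\ge0}E\big[\mathbf 1_{\{(\Pi_m,\widehat W_m)\in A\}}\,\mathbf 1_{\{\tau^x_m\le t\}}\,\bar F(t-\tau^x_m)\big],\\
P\big((\Phi(\tau^x(t)),W(\tau^x(t)))\in A,\,X(t)=x\big)&=\sum_{m\ge0}E\big[\mathbf 1_{\{(\Pi_m,\widehat W_m)\in A\}}\,\mathbf 1_{\{\tau^x_m\le t\}}\,e^{-q_x(t-\tau^x_m)}\big],
\end{align*}
with $\bar F(u)=P(\tau^x_1-\tau^x_0>u)$. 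Now truncate the sums at a large $M$ (the part $m<M$ vanishes as $t\to\infty$ by bounded convergence), replace $\mathbf 1_{\{(\Pi_m,\widehat W_m)\in A\}}$ by $p_A:=P((\Phi^x_\infty,W^x_\infty)\in A)$ at a cost controlled by $E(\sup_{m\ge M}|\mathbf 1_{\{(\Pi_m,\widehat W_m)\in A\}}-\mathbf 1_{\{(\Phi^x_\infty,W^x_\infty)\in A\}}|)\to0$ (from the a.s.\ convergence $(\Pi_m,\widehat W_m)\to(\Phi^x_\infty,W^x_\infty)$), and apply the key renewal theorem to $\sum_mE[\mathbf 1_{\{\tau^x_m\le t\}}h(t-\tau^x_m)]=\int_0^th(t-s)\,dU(s)\to\frac1\mu\int_0^\infty h$, with $U$ the renewal measure of the i.i.d.\ cycle lengths and $\mu=E(\tau^x_1-\tau^x_0)$. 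Taking $h=\bar F$ gives \eqref{eq:goal} since $\frac1\mu\int\bar F=1$; taking $h(u)=e^{-q_xu}$ gives numerator $\to p_A/(\mu q_x)$ and, for $A$ the whole space, $P(X(t)=x)\to\pi(x)=1/(\mu q_x)$, so dividing yields \eqref{eq:conditional_limit}.

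\emph{Main obstacle.} Two points need care: (i) turning the coupling bound on a conserved block — which only controls the \emph{diameter} of the column set of $\Pi_n$ — into genuine a.s.\ convergence of $\Pi_n$ to a rank-one matrix, which the monotonicity of $\mathrm{diam}_n$ together with the geometric decay of $E(\mathrm{diam}_n)$ resolve; and (ii) the final renewal step, where $(\Pi_m,\widehat W_m)$ and $\tau^x_m$ are \emph{not} independent, so the expectation cannot be factored — this is handled by exploiting the a.s.\ (rather than merely in law) convergence of the reversed sequence to pull the constant $p_A$ out uniformly over the tail indices before invoking the key renewal theorem, once the requisite uniform integrability of $\sum_m\mathbf 1_{\{\tau^x_m\le t\}}h(t-\tau^x_m)$ in $t$ is verified.
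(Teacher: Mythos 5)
Your proposal is broadly viable and follows the same skeleton as the paper (reverse the i.i.d.\ cycle sequence $(C^x_k,D^x_k)$, prove a.s.\ convergence of the reversed products, get $E\|W^x_\infty\|_1<\infty$ from the contraction of Lemma~\ref{lem:norm_1_implications}\ref{item:norm<1}), but it departs from the paper in two substantive places. First, for the closed components $\cls_i$ the paper simply invokes \cite[Theorem 1.4]{infinite_matrix_product} to get a.s.\ convergence of the reversed product to a rank-one (identical-columns) limit, whereas you give a self-contained Doeblin-type coupling: monotonicity of the column diameter under left multiplication by column-stochastic factors, plus a geometric bound on $E(\mathrm{diam}_{n\alpha})$, yields summability of the diameters and hence a Cauchy column sequence. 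This is a legitimate and arguably more transparent substitute (one small slip: the per-block coalescence probability of two conditionally independent molecules is bounded below by $E\big[(\min_{i,j}\Phi_{ij}(\tau^x_0,\tau^x_\alpha))^2\big]$, an expectation of a \emph{product} of entries, not by $E\big(\min_{i,j}\Phi_{ij}(\tau^x_0,\tau^x_\alpha)\big)$ itself; positivity still follows from Lemma~\ref{lem:norm_1_implications}\ref{item:irreducibility}). Second, for the conditional limit \eqref{eq:conditional_limit} the paper does not use renewal theory at all: it introduces a slowly diverging time $\sigma(t)$ such that conditioning on $\mathcal{F}_{\sigma(t)}$-measurable events perturbs $P(X(t)=x)$ by at most $\varepsilon$ (via ergodicity and a finite exhaustion $\Gamma_\varepsilon$ of $\Gamma$), and then uses that $(\widehat\Phi^x(\sigma(t)),\widehat W^x(\sigma(t)))$ is already close to the limit. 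Your key-renewal-theorem route, decomposing over the index $m$ of the last visit and weighting by $\bar F(t-\tau^x_m)$ or $e^{-q_x(t-\tau^x_m)}$, is a natural alternative.

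There is, however, a concrete gap in that last step. Replacing $\mathbf 1_{\{(\Pi_m,\widehat W_m)\in A\}}$ by the \emph{constant} $p_A$ inside $\sum_m E\big[\mathbf 1_{\{(\Pi_m,\widehat W_m)\in A\}}\mathbf 1_{\{\tau^x_m\le t\}}h(t-\tau^x_m)\big]$ requires two substitutions, and your error bound $E\big(\sup_{m\ge M}|\mathbf 1_{\{(\Pi_m,\widehat W_m)\in A\}}-\mathbf 1_{\{(\Phi^x_\infty,W^x_\infty)\in A\}}|\big)\to 0$ only controls the first (indicator of the $m$-th partial product versus indicator of the limit). The second substitution, replacing the \emph{random variable} $\mathbf 1_{\{(\Phi^x_\infty,W^x_\infty)\in A\}}$ by its expectation $p_A$, is a decorrelation statement,
\begin{equation*}
E\Big[\mathbf 1_{\{(\Phi^x_\infty,W^x_\infty)\in A\}}\sum_m\mathbf 1_{\{\tau^x_m\le t\}}h(t-\tau^x_m)\Big]\;\longrightarrow\;\frac{p_A}{\mu}\int_0^\infty h(u)\,du,
\end{equation*}
which does not follow from the key renewal theorem alone because $(\Phi^x_\infty,W^x_\infty)$ is correlated with every $\tau^x_m$. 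It is true here, but needs an argument: approximate $\mathbf 1_{\{(\Phi^x_\infty,W^x_\infty)\in A\}}$ in $L^1$ by an $\mathcal{F}_{\tau^x_K}$-measurable variable (martingale convergence), condition on $\mathcal{F}_{\tau^x_K}$, and apply the renewal theorem to the delayed renewal process started at $\tau^x_K$. Relatedly, converting the $L^1$-smallness of the sup into smallness of the tail sum requires $\sup_t E\big[(\sum_m\mathbf 1_{\{\tau^x_m\le t\}}h(t-\tau^x_m))^2\big]<\infty$, which you flag but do not verify; it does hold because every inter-visit time dominates an $\textup{Exp}(q_x)$ holding time, so the number of renewals in a bounded window has uniformly bounded moments. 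With these two points filled in, your argument for \eqref{eq:conditional_limit} goes through; the remainder of the proposal is correct as written.
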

 
 As the transient species are never produced but are all eventually degraded or transformed into other species, the assumption $\cls_T=\emptyset$ is more of convenience than necessity.
  
\begin{Lemma}\label{lem:uniqueness} Suppose Assumption~\ref{as0} and \ref{asergodic*} hold and that   $\Gamma$ contains at least two elements. 
Then  $(\Phi^x_\infty,W^x_\infty)$ is a solution to the  SRE,  
\begin{equation}\label{matrixrec}
\begin{pmatrix} V_1 \\ V_2\end{pmatrix} \,\,\sim \,\,\begin{pmatrix} C & 0 \\ 0 & C\end{pmatrix} \begin{pmatrix} V_1 \\ V_2\end{pmatrix} +\begin{pmatrix} 0 \\ D\end{pmatrix},\qquad (V_1,V_2)\,\bigCI \,(C, D),
\end{equation}
where $(C,D)$ is distributed as $(C^x_0,D^x_0)$. Moreover, $W^x_\infty$ is the weakly unique solution to the  bottom line  $V_2\sim CV_2+D$ in \eqref{matrixrec}.
\end{Lemma}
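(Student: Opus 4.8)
\emph{Proof plan.}

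The first assertion is purely algebraic and exploits the shift-invariance of the renewal decomposition already used in Lemma~\ref{lem:R}. By the strong Markov property the pairs $(C^x_k,D^x_k)_{k\ge0}$ in \eqref{eq:def_C} are i.i.d., and by Lemma~\ref{lem:R} the series $\Phi^x_\infty,W^x_\infty$ in \eqref{eq:def_Phi_W_infty} converge a.s. Peeling off the $k=0$ factor gives $\Phi^x_\infty=C^x_0\widetilde\Phi$ and $W^x_\infty=D^x_0+C^x_0\widetilde W$, where $(\widetilde\Phi,\widetilde W)$ is obtained from $(C^x_k,D^x_k)_{k\ge1}$ by exactly the functionals that produce $(\Phi^x_\infty,W^x_\infty)$ from $(C^x_k,D^x_k)_{k\ge0}$. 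Since $(C^x_{k+1},D^x_{k+1})_{k\ge0}$ has the same law as $(C^x_k,D^x_k)_{k\ge0}$ and is independent of $(C^x_0,D^x_0)$, we get $(\widetilde\Phi,\widetilde W)\sim(\Phi^x_\infty,W^x_\infty)$ and $(\widetilde\Phi,\widetilde W)\bigCI(C^x_0,D^x_0)$. Reading the two peeled identities as the two rows of a block system and setting $(C,D)=(C^x_0,D^x_0)$ shows that $(\Phi^x_\infty,W^x_\infty)$ solves \eqref{matrixrec}; in particular $W^x_\infty$ solves its bottom line $V_2\sim CV_2+D$.

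For weak uniqueness of the bottom line I would invoke the general theory of stochastic recurrence equations, e.g.\ \cite{buraczewski2016stochastic}, whose hypotheses are $E(\log^+\|C\|_1)<\infty$, $E(\log^+\|D\|_1)<\infty$, and a contractivity condition on the random matrices. The first is immediate: $\|C^x_0\|_1=\max_i\|\Phi(\tau^x_0,\tau^x_1)e_i\|_1\le1$ by Lemma~\ref{lem:interpretation}\ref{item:norm_1_leq_1}. The second follows since $0\le D^x_0\le W^x_\infty$ componentwise, whence $E(\|D^x_0\|_1)\le E(\|W^x_\infty\|_1)<\infty$ by Lemma~\ref{lem:R}. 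The contractivity is the real content. One cannot use $\|C\|_1\le1$ alone, as it only yields a non-positive top Lyapunov exponent, with equality genuinely occurring along species lying in closed strongly connected components of $\transf$ that admit no degradation. The remedy is to localise, then pass to $\alpha$-step blocks. Because $\widetilde\lambda_{0j}\equiv0$ unless $S_j$ is produced, and $\Phi(\cdot)$ is block diagonal with respect to the partition of $\cls$ induced by $\transf$, the vectors $B_X(\cdot)$ --- hence $D^x_k$ and $W^x_\infty$ --- are supported on the coordinates of the properly produced, hence (by Assumption~\ref{asergodic*}) properly degraded, species; projecting the bottom line onto these coordinates yields a self-contained recursion of smaller dimension. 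Grouping the i.i.d.\ sequence into runs of $\alpha$ consecutive visits to $x$, the matrix of the blocked recursion is a product of $\alpha$ consecutive factors $C^x_k$, which by exchangeability has the same expected $\|\cdot\|_1$-norm as $\Phi(\tau^x_0,\tau^x_\alpha)=C^x_{\alpha-1}\cdots C^x_0$; restricted to the properly degraded coordinates this expectation is $<1$ by Lemma~\ref{lem:norm_1_implications}\ref{item:norm<1}. Iterating the blocked recursion $n$ times, the term carrying the arbitrary initial condition is dominated in $L^1$ by a geometrically small factor and vanishes, while the remaining sum converges --- by the classical re-indexing exchanging forward and backward products of i.i.d.\ matrices --- in law to the a.s.\ limit $W^x_\infty$ of Lemma~\ref{lem:R}. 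Thus any solution of $V_2\sim CV_2+D$ has the law of $W^x_\infty$.

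The crux, and the main obstacle, is the contractivity step: since $\|C\|_1\le1$ with equality possible, the textbook ``negative top Lyapunov exponent'' hypothesis is not available for the full matrix, and one must pass to the subspace spanned by the properly degraded species --- where $B_X(\cdot)$, $D^x_k$ and $W^x_\infty$ all live by the block structure of $\Phi(\cdot)$ --- on which Lemma~\ref{lem:norm_1_implications}\ref{item:norm<1} provides a genuine $\alpha$-step mean contraction. Everything else --- measurability of the limiting functionals, the forward/backward product re-indexing, and the independence claims in the peeled identities --- is routine given the renewal structure of Lemma~\ref{lem:R}.
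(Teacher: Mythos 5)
Your treatment of the first assertion is in substance the paper's own: by the i.i.d.\ structure of $(C^x_k,D^x_k)_{k\ge0}$ and the ordering of the products in \eqref{eq:def_Phi_W_infty}, peeling off the index-$0$ factor exhibits $(\Phi^x_\infty,W^x_\infty)=(C^x_0\widetilde\Phi,\,D^x_0+C^x_0\widetilde W)$ with $(\widetilde\Phi,\widetilde W)\sim(\Phi^x_\infty,W^x_\infty)$ and $(\widetilde\Phi,\widetilde W)\bigCI(C^x_0,D^x_0)$; the paper phrases the same fact by prepending a fresh independent copy of $(C,D)$ to the sequence, which is equivalent. For uniqueness you rely on the same key input as the paper, namely the $\alpha$-step mean contraction of Lemma~\ref{lem:norm_1_implications}\ref{item:norm<1} applied to blocks of $\alpha$ consecutive factors; your insistence that the contraction is only available on the properly degraded coordinates (where $B_X(\cdot)$, $D^x_k$ and $W^x_\infty$ live) is a point the paper makes explicit only in the proof of Lemma~\ref{lem:R}, via the projection matrix $J$ onto those coordinates. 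Where you diverge is the uniqueness machinery: the paper verifies, via Jensen's inequality and the strong law of large numbers applied to $\log\|\cdot\|_1$ of the $\alpha$-blocks, the two almost-sure conditions $C_{n}\cdots C_0\to0$ and $(C_{n-1}\cdots C_0)D_n\to0$ required by \cite[Theorem~2.1]{erhardsson2014conditions}, whereas you iterate the recursion directly and control the remainder in $L^1$.

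That last step is the one genuine gap. If $V_2$ is an \emph{arbitrary} solution of $V_2\sim CV_2+D$, iterating gives $V_2\sim \big(C_0C_1\cdots C_{n-1}\big)V_2^{(n)}+\sum_{k=0}^{n-1}\big(C_0\cdots C_{k-1}\big)D_k$ with $V_2^{(n)}\sim V_2$ independent of the first $n$ coefficient pairs, and your claim that the first term is ``dominated in $L^1$ by a geometrically small factor'' presupposes $E(\|V_2\|_1)<\infty$ --- but weak uniqueness must also rule out non-integrable fixed points, and no moment bound on $V_2$ is available a priori. The repair is standard and is precisely what the almost-sure conditions above buy: the geometric decay of $E(\|C_0J\cdots C_{n-1}J\|_1)$ together with the monotonicity of these norms in $n$ (each $\|C_i\|_1\le1$ by Lemma~\ref{lem:interpretation}\ref{item:norm_1_leq_1}) forces $C_0J\cdots C_{n-1}J\to0$ a.s., and since $V_2^{(n)}$ has a fixed, hence tight, law the initial-condition term vanishes in probability with no integrability assumption. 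With that substitution --- and either the standing assumption $\cls_T=\emptyset$ or a block-triangular argument to justify that the recursion genuinely closes on the $\cls_P$-coordinates, since conversions from $\cls_T$ into $\cls_P$ are not excluded by the hypotheses --- your route goes through and amounts to a self-contained version of the paper's appeal to \cite{erhardsson2014conditions}.
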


\subsection{Proof of Theorem \ref{thm:ergodicity}} 
  \label{sec:ergodicity}
  
Before proving Theorem~\ref{thm:ergodicity} we need some preliminary results. The following lemma relies on the characterisation of the distribution of $Z(t)$ for $t\ge 0$ given in Proposition~\ref{P2}. Recall that the state space of $\prX$ is irreducible, hence whether the process $\prW$ is tight or not holds regardless of the initial condition $X(0)$.

\begin{Lemma}\label{cor:ergodic}
Suppose Assumption \ref{as0} holds. Furthermore, assume the process $\prX$ is ergodic and let $\prW$ be as in \eqref{eq:convergence}. Then, $\prW$ is tight if and only if the process $\prXZ$ is ergodic for any initial condition.
\end{Lemma}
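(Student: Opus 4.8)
The plan is to exploit the explicit description of $\mathcal{L}(Z(t)\mid\Ft)$ in Proposition~\ref{P2}, which expresses $Z(t)$ as a convolution of multinomial terms (depending on $Z(0)$ and on $\Phi(t)$) and Poisson terms whose intensities are determined, through the functions $g^X_{u,t}$, by the process $\prW$ and more generally by the path functionals $\int_0^t\widetilde\lambda_{0j}(X(u))g^X_{u,t}(\nu,m_j)\,du$. First I would observe that the multinomial contribution from the initial condition is automatically tight: since $\|\Phi(t)e_i\|_1\le 1$ by Lemma~\ref{lem:interpretation}\ref{item:norm_1_leq_1}, each $\textup{Multi}(Z_i(0),\Phi(t)e_i)$ is stochastically dominated by the fixed random variable $Z_i(0)$, so this part contributes nothing to a possible loss of mass and can be ignored for the tightness question. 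Thus tightness of $\prZ$ (equivalently of $\prXZ$, since $\prX$ is ergodic, hence tight, on the irreducible $\Gamma$) is governed solely by the Poisson part, i.e.\ by the family of intensities appearing in Proposition~\ref{P2}.

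For the direction ``$\prW$ tight $\Rightarrow$ $\prXZ$ ergodic for any initial condition'', I would argue as follows. Each Poisson intensity $\int_0^t\widetilde\lambda_{0j}(X(u))g^X_{u,t}(\nu,m_j)\,du$ is bounded above by $\binom{m_j}{\nu}\int_0^t\widetilde\lambda_{0j}(X(u))\|\Phi(u,t)e_j\|_1^{|\nu|}\,du$, and summing the configuration-weighted contributions one recovers, up to combinatorial constants depending only on $m=(m_1,\dots,m_d)$, a quantity comparable to $\|W(t)\|_1=\|\int_0^t\Phi(u,t)B_X(u)\,du\|_1$ (when all $m_j\le 1$ this is exactly $W(t)$; for general $m_j$ the extra configurations only add finitely many further terms, each again dominated by the $M/M/\infty$-type integral). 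Hence tightness of $\prW$ forces tightness of the total Poisson intensity, and since a Poisson (or sum of scaled Poissons) random variable with tight intensity is itself tight, $\mathcal{L}(Z(t))$ is tight. Tightness of $\prZ$ together with irreducibility of the reachable component of $\Gamma\times\Z_{\ge0}^d$ gives, by the standard Foster--Lyapunov/tightness criterion for CTMCs on a countable state space (a tight irreducible Markov chain on a countable space is positive recurrent), ergodicity for any initial condition; one must also check that there is a unique such reachable component, which follows from irreducibility of $\Gamma$ and the reaction-network structure as noted after the theorem statement.

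For the converse ``$\prXZ$ ergodic $\Rightarrow$ $\prW$ tight'', I would run the implication the other way: if $\prXZ$ is ergodic for any initial condition then $\prZ$ is tight, so in particular, starting from $Z(0)=0$, the random variables $Z(t)$ are tight. With $Z(0)=0$ the multinomial part vanishes and $Z(t)$ conditionally on $\Ft$ is exactly the Poisson convolution, whose total intensity dominates $\|W(t)\|_1$ from below (again using that for $m_j\le 1$ it equals $W(t)$, and for larger $m_j$ the configuration $\nu=e_j$ alone already contributes $\int_0^t\widetilde\lambda_{0j}(X(u))\Phi_{jj}(u,t)\,du$, and a lower bound on $W(t)$ can be assembled from such single-species terms along properly-produced chains). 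Since $E(Z(t)\mid\Ft)$ equals (a linear image of) these intensities, tightness of the integer-valued $Z(t)$ forces tightness of its conditional mean vector, hence of $\prW$. The main obstacle I anticipate is the bookkeeping for general $m_j>1$: translating between the configuration-indexed Poisson intensities $g^X_{u,t}(\nu,m_j)$ and the single vector-valued functional $W(t)$ requires showing these are comparable up to constants depending only on the fixed integers $m_j$, uniformly in $t$ and in the environment path; once that comparison is in place, the two-way tightness argument is routine. A secondary point requiring care is passing from tightness of the \emph{conditional} laws $\mathcal{L}(Z(t)\mid\Ft)$ and $\mathcal{L}(\prW)$ to tightness of the unconditional laws, which is handled by taking expectations in the uniform tail bounds and using Markov's inequality.
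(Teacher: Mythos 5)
Your overall strategy is the same as the paper's: reduce everything to the Poisson part of the convolution in Proposition~\ref{P2}, show that tightness of $\prW$ is equivalent to tightness of $\prZ$, and then pass from tightness of $\prZ$ to ergodicity of $\prXZ$ via compactness of the family of laws (your ``tight irreducible chain is positive recurrent'' is the same device as the paper's Prokhorov argument). The forward direction is essentially correct. However, the converse direction as you have written it contains a genuine gap. You justify ``$\prZ$ tight $\Rightarrow$ $\prW$ tight'' by asserting that tightness of the integer-valued $Z(t)$ forces tightness of its conditional mean $E(Z(t)\mid\Ft)$. That implication is false in general: take $Y_t=t^2\mathbf{1}_{A_t}$ with $P(A_t)=1/t$; the family $\{Y_t\}$ is tight, but $E(Y_t)=t$ is not. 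What saves the argument here is not a generic property of conditional means but the Poisson structure: a mixed Poisson family $\textup{Pois}(L(t))$ can only be tight if the mixing intensities $L(t)$ are tight, because $\textup{Pois}(\lambda)$ concentrates around $\lambda$ (Chebyshev gives $P(\textup{Pois}(\lambda)>\lambda-2\sqrt{\lambda})\geq 3/4$). This concentration step is exactly the content of the paper's Lemma~\ref{lem:tight_poisson}, and it is the missing ingredient in your write-up; without it the converse does not go through.

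A second, smaller inaccuracy is your lower bound on the Poisson intensity for general $m_j$. The contribution of the configuration $\nu=e_j$ is $\int_0^t\widetilde\lambda_{0j}(X(u))\,g^X_{u,t}(e_j,m_j)\,du$ with $g^X_{u,t}(e_j,m_j)=m_j\Phi_{jj}(u,t)\bigl(1-\|\Phi(u,t)e_j\|_1\bigr)^{m_j-1}$, not $\int_0^t\widetilde\lambda_{0j}(X(u))\Phi_{jj}(u,t)\,du$; the extra factor can be small, so single configurations do not by themselves dominate $W(t)$. The clean comparison is either to sum over all nonzero configurations and use $1-(1-a)^{m}\geq a$ for $0\leq a\leq1$, $m\geq1$ (so that the intensity of ``birth events with at least one survivor'' dominates $\|W(t)\|_1$), or, as the paper does, to couple $\prZ$ with the process obtained by replacing $m_j$ with $m'_j=\min(1,m_j)$, for which the Poisson part is exactly $\textup{Pois}(W(t))$ and which is stochastically dominated by $\prZ$. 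With Lemma~\ref{lem:tight_poisson} and one of these comparisons in place, the rest of your argument is sound.
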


The proof of the Lemma is in the Appendix. 

\begin{Lemma}\label{int-tight}
Assume that $\prX$ is ergodic and that $\Gamma$ contains at least two elements. Let $x\in\Gamma$ and define $\tau^x(t)$ as in \eqref{eq:hattau}. Then, the process 
$$\{W(t)-W(\tau^x(t))\colon t\ge 0\}$$
is tight.
\end{Lemma}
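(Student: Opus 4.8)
The plan is to show that $W(t)-W(\tau^x(t))$ is the accumulated production over the incomplete excursion from the last visit to $x$ before time $t$, transported by the fundamental matrix, and to bound it by a quantity whose law does not depend on $t$. Concretely, using \eqref{eq:convergence} and \eqref{eq:phi_tu}, I would first write
\begin{equation*}
W(t)-\Phi(\tau^x(t),t)W(\tau^x(t))=\int_{\tau^x(t)}^{t}\Phi(u,t)B_X(u)\,du,
\end{equation*}
and hence
\begin{equation*}
W(t)-W(\tau^x(t))=\int_{\tau^x(t)}^{t}\Phi(u,t)B_X(u)\,du+\big(\Phi(\tau^x(t),t)-I_d\big)W(\tau^x(t)).
\end{equation*}
Then I would observe that the matrix entries of $\Phi(u,t)$ are, by Lemma~\ref{lem:interpretation}, conversion/survival probabilities and so $\|\Phi(u,t)e_i\|_1\le 1$; thus $\|\Phi(\tau^x(t),t)-I_d\|_1\le 2$ and $\|\int_{\tau^x(t)}^t\Phi(u,t)B_X(u)\,du\|_1\le\int_{\tau^x(t)}^t\|B_X(u)\|_1\,du$. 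This reduces tightness of $W(t)-W(\tau^x(t))$ to controlling the two quantities $\int_{\tau^x(t)}^t\|B_X(u)\|_1\,du$ and $\|W(\tau^x(t))\|_1$.

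For the first quantity, the key point is that between $\tau^x(t)$ and $t$ the process $\prX$ has not yet returned to $x$, so this integral is stochastically dominated by $\int_{\tau^x_0}^{\tau^x_1}\|B_X(u)\|_1\,du$ evaluated on an excursion started from $x$ (more carefully: by the strong Markov property at $\tau^x(t)$, the post-$\tau^x(t)$ path up to the next return is a sub-segment of a full inter-visit excursion). By Lemma~\ref{lem:norm_1_implications}\ref{item:production_bounded} (with $\alpha=1$), $E\big(\int_{\tau^x_0}^{\tau^x_1}\|B_X(u)\|_1\,du\big)<\infty$; a finite-mean nonnegative random variable that dominates the family $\{\int_{\tau^x(t)}^t\|B_X(u)\|_1\,du:t\ge0\}$ uniformly in $t$ gives tightness of that family by Markov's inequality.

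For the second quantity, I would invoke Lemma~\ref{lem:R} (after first reducing to $\cls_T=\emptyset$ — the transient species never being produced, their contribution to $W$ is itself a production integral over a bounded-expectation excursion and is handled as above, or one simply notes transient species do not affect tightness). Lemma~\ref{lem:R} gives that $W(\tau^x(t))$ converges in distribution as $t\to\infty$ to $W^x_\infty$, and in particular the family $\{W(\tau^x(t)):t\ge0\}$ is tight (a convergent-in-distribution family on $\R^d$ is tight, and the finitely many small $t$ are harmless). Combining: a sum of three uniformly-tight $\R^d$-valued families is tight, so $\{W(t)-W(\tau^x(t)):t\ge0\}$ is tight.

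\textbf{Main obstacle.} The delicate step is the stochastic domination of $\int_{\tau^x(t)}^t\|B_X(u)\|_1\,du$ by a single integrable random variable uniformly in $t$: one must argue via the strong Markov property at the stopping-time-like object $\tau^x(t)$ that the residual excursion (from $\tau^x(t)$ to $t$, with $t<\tau^x(t)+T$, $T$ the next inter-visit time) is, in distribution, a prefix of a genuine inter-visit excursion of $\prX$ started at $x$, so that the integral over it is bounded by the integral over the whole excursion. The subtlety is that $\tau^x(t)$ is not a stopping time for the natural filtration in the usual sense (it looks into the future to know there is no return before $t$); the clean fix is to decompose over the discrete return times $\tau^x_n$, write $\int_{\tau^x(t)}^t\|B_X\|_1\le \sum_{n\ge0}\mathbf 1_{\{\tau^x_n\le t<\tau^x_{n+1}\}}\int_{\tau^x_n}^{\tau^x_{n+1}}\|B_X\|_1$, and note the right side is at most $\sup_{n\ge0}\int_{\tau^x_n}^{\tau^x_{n+1}}\|B_X(u)\|_1\,du$ — but since a supremum of i.i.d.\ copies need not be integrable, the better route is to fix $\varepsilon>0$, use Lemma~\ref{lem:R}-type control plus the renewal structure to find $K$ with $P(\int_{\tau^x_n}^{\tau^x_{n+1}}\|B_X\|_1>K \text{ for the relevant } n)<\varepsilon$ uniformly in $t$, which follows because for each fixed $t$ only one index $n$ is relevant and its excursion integral has the fixed (integrable) law of $D^x_0$'s norm-type quantity. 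Once that uniform smallness is in hand, tightness follows routinely.
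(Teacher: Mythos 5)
Your decomposition is more careful than the paper's in one respect: you correctly note that $W(t)-W(\tau^x(t))=\int_{\tau^x(t)}^{t}\Phi(u,t)B_X(u)\,du+(\Phi(\tau^x(t),t)-I_d)W(\tau^x(t))$, whereas the paper's own proof writes the difference as the integral alone. However, your argument has a genuine gap in the treatment of $\int_{\tau^x(t)}^{t}\|B_X(u)\|_1\,du$. The domination of this quantity by $\int_{\tau^x_0}^{\tau^x_1}\|B_X(u)\|_1\,du$ is exactly the inspection (length-bias) paradox: the excursion $[\tau^x_{n^x_t},\tau^x_{n^x_t+1})$ is the one \emph{containing} $t$, and conditioning on containing a fixed time biases its law towards long excursions. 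Your proposed fix rests on the assertion that ``for each fixed $t$ only one index $n$ is relevant and its excursion integral has the fixed (integrable) law of $D^x_0$'s norm-type quantity'' --- this is precisely the false step: the straddling excursion does \emph{not} have the law of a typical excursion. The desired uniform tightness is in fact true and could be recovered via a renewal--reward computation (the limiting probability that the straddling excursion integral exceeds $K$ is $E[(\tau^x_1-\tau^x_0)\mathbf{1}\{\cdot>K\}]/E[\tau^x_1-\tau^x_0]$, which vanishes as $K\to\infty$), but that argument is not the one you give, and making it uniform in $t$ requires additional work.

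The paper sidesteps this entirely with a different and more elementary route: it bounds $\int_{\tau^x(t)}^{t}\|B_X(u)\|_1\,du\leq (t-\tau^x(t))\cdot\sup_{u\in[\tau^x(t),t]}\|B_X(u)\|_1$, controls the age $t-\tau^x(t)$ by tightness of the age process from renewal theory, and controls the supremum by combining tightness of the ergodic process $\prX$ with non-explosivity (so that $X$ stays in a finite set over a window $[t-T,t]$ with high probability, on which $\|B_X\|_1$ is bounded). Note that this uses only ergodicity, discreteness of $\Gamma$ and non-explosivity --- no moment condition on $B$ at all --- which matters because Lemma~\ref{int-tight} does not assume Assumption~\ref{asergodic*}, whereas your proof imports it twice, through Lemma~\ref{lem:norm_1_implications}\ref{item:production_bounded} and through Lemma~\ref{lem:R} (the latter additionally needing $\cls_T=\emptyset$). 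Within the proof of Theorem~\ref{thm:ergodicity} those hypotheses are available, so your overall strategy could be salvaged there, but as a proof of the lemma as stated it both proves less and, at the key step, relies on an incorrect distributional identification.
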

\begin{proof}
 The process $\prX$ is ergodic, hence tight. Moreover, the state space $\Gamma$ is by assumption discrete. It follows that for any $\varepsilon>0$ there exists a finite set $\Upsilon_\varepsilon\subseteq\Gamma$ such that
 $$\inf_{t\ge 0}P(X(t)\in \Upsilon_\varepsilon)>1-\frac{\varepsilon}{2}.$$
 By assumption, the process $\prX$ is non-explosive. Hence, since $\Upsilon_\varepsilon$ is finite, it follows that for any $T>0$ (with the given $\varepsilon$) there exists a finite set $\widetilde{\Upsilon}_{\varepsilon, T}\subseteq\Gamma$ such that
 $$\inf_{t\ge 0}P\Big(X(u)\in \widetilde{\Upsilon}_{\varepsilon, T}\text{ for all }u\in [t,t+T]\Big)>1-\frac{\varepsilon}{2}.$$
 The finiteness of $\widetilde{\Upsilon}_{\varepsilon, T}$ in turn implies that
 $$\inf_{t\ge 0}P\Big(\|B_X(u)\|_1\leq M_{\varepsilon, T}\text{ for all }u\in [t,t+T]\Big)>1-\varepsilon,$$
 where
 $$M_{\varepsilon, T}=\max_{\widetilde{x}\in \widetilde{\Upsilon}_{\varepsilon, T}}\|B(\widetilde{x})\|_1<\infty.$$
 By standard renewal theory  the age process $\{t-\tau^x(t)\colon t\ge 0\}$ is tight \cite{durrett}. In particular, a unique stationary distribution exists with density with respect to the Lebesgue measure  \cite{durrett},
  $$f^x(u)=\frac{P(\tau^x_1\geq u)}{E(\tau^x_1)}\quad\text{ for all }\quad u\geq0.$$
 Hence, for all $\varepsilon>0$ there exists $M'_\varepsilon$ such that
 $$\inf_{t\ge 0}P\Big(t-\tau^x(t)\leq M'_\varepsilon\Big)>1-\frac{\varepsilon}{2}.$$
 By Lemma~\ref{lem:interpretation},
 \begin{align*}
  \|W(t)-W(\tau^x(t))\|_1&=\left\|\int_{\tau^x(t)}^{t}\Phi(u,t)B_{X}(u)\,du\right\|_1\\
  &\leq \int_{\tau^x(t)}^{t}\|B_{X}(u)\|_1\,du\leq (t-\tau^x(t))\cdot\sup_{u\in[\tau^x(t), t]}\|B_{X}(u)\|_1
 \end{align*}
 It follows that for any $\varepsilon>0$,
 $$\inf_{t\in\R_{\geq0}}P\Big(\|W(t)-W(\tau^x(t))\|_1\leq M'_\varepsilon \cdot M_{\varepsilon, M'_\varepsilon}\Big)>1-\varepsilon,$$
 which concludes the proof.
\end{proof}

We are now ready to prove Theorem~\ref{thm:ergodicity}.

\begin{proof}[Proof of Theorem~\ref{thm:ergodicity}]
First, assume that $\Gamma$ has at least two elements. Due to Lemma~\ref{cor:ergodic}, it is sufficient to prove that the process $\prW$ is tight for any initial condition $X(0)$. Assume $X(0)=x\in\Gamma$. Since
\begin{equation*}
W(t)= \big(W(t)-W(\tau^x(t))\big)+W(\tau^x(t))\quad\text{for all}\quad t\ge 0,
\end{equation*}
the process $\prW$ is tight because of Lemmas~\ref{lem:R} and \ref{int-tight}. The proof is then concluded for $\Gamma$ containing at least two elements. 

If $\Gamma$ contains a single element $x$, then we  consider a modification of the model with $\Gamma'=\{x_1, x_2\}$ and $\prXp$ being any irreducible CTMC on $\Gamma'$. Since $\Gamma'$ is finite, $\prXp$ is ergodic \cite{norris1998markov}. We define a stochastic reaction system with stochastic environment $(\clg, \Lambda', \prXp)$ by letting 
$$\lambda_r(x_1,z)=\lambda_r(x_2,z)=\lambda_r(x,z)\quad\text{for all}\quad z\in\Z_{\geq0}^d,$$
and all reactions $y_r\to y'_r$ of $\clg$.
Then the distribution of $\prZp$ is the same as that of $\prZ$. Specifically, $\prXZ$ is ergodic if and only if $\prXZp$ is ergodic, and since $\Gamma'$ contains at least two elements the proof is concluded.
\end{proof}

\subsection{Characterisation of the stationary distribution}

We characterise the explicit structure of the stationary distribution of models satisfying Assumption \ref{as0} and \ref{asergodic*} in  the case 
$m_i\in\{0,1\}$ for  $1\leq i\leq d$. If $d=1$, the model is a $M/M/\infty$ queue with stochastic environment, and the stationary distribution has been characterised by a recursive equation  under the assumption that $\Gamma$ has finitely many states  \cite{o1986m}.

For  $x\in\Gamma$, let $q_x$ be the rate of the exponential holding time of $\prX$ in $x$, and let $G_x\colon \R_{\ge 0}\to \R^{d}$ be the function defined as
\begin{equation}\label{eq:G}
    G_x(s)= \int_0^{s} e^{A(x)u}B(x)\,du=\sum_{n= 0}^{\infty}\frac{1}{(n+1)!}A^n(x)B(x)s^{n+1},
\end{equation} 
where $A(x)$ and $B(x)$ are as given in \eqref{eq:AX} and \eqref{eq:BX}, respectively.

\begin{Theorem}\label{T1}
Suppose Assumption \ref{as0} and \ref{asergodic*} hold, and that $\cls_T=\emptyset$. Further, assume that $m_i\in\{0,1\}$ for $1\leq i\leq d$. Then, $\prXZ$ is ergodic, and the irreducible closed sets are
\begin{equation*}
\Xi_{n_1, \dots, n_h}=\Big\{(x,z)\in\Gamma\times\Z^d_{\geq0}\colon \sum_{S_j\in\cls_i}z_j=n_i\text{ for  }i=1,\ldots,h\Big\},
\end{equation*}
with $n_1, \dots, n_h\in\Z_{\geq0}$. Moreover, for any $(x,z)\in\Xi_{n_1, \dots, n_h}$
\begin{equation}
\widehat\pi(x,z) = \pi(x)\int_{\R_{\geq0}^{d\times d}\times\R_{\geq0}^d} P\Big(N(w)+\sum_{i=1}^h M_i(n_i,ue_{j(i)})=z\Big)  \mu_x(du,dw),\label{invariant}
\end{equation}
where $N(w)\sim\textup{Pois}(w)$ and $M_i(k,v)\sim\textup{Multi}(k,v)$, $j$ is any function $j\colon\{1,\ldots,h\}\to\{1,\dots,d\}$ such that $S_{j(i)}\in\cls_i$ for all $i$, $\pi$ is the stationary distribution of $\prX$ and $\mu_x$ is the distribution of
\begin{equation}\label{mainres}
\big(e^{A(x)U_{x}}\Phi^x_\infty,e^{A(x)U_{x}}W^x_\infty+G^{x}(U_{x})\big),
\end{equation}
with $U_x\sim \textup{Exp}(q_x)$ being independent of $(\Phi^x_\infty,W^x_\infty)$.
\end{Theorem}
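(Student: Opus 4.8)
The plan is to obtain the ergodic decomposition from Theorem~\ref{thm:ergodicity} together with the conservation laws, and then to identify the stationary law by passing to the limit $t\to\infty$ in the finite-time description of Proposition~\ref{P2}, feeding in the limit theorem for the path functionals from Lemma~\ref{lem:R} and an extra independent exponential holding time of $\prX$ at the state $x$.

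First I would settle the ergodic decomposition. Since $\cls_T=\emptyset$ and $m_i\in\{0,1\}$, the reactions are conversions $S_i\to S_j$, productions $0\to S_j$, and degradations $S_i\to 0$; by the structure of the partition $\cls=\cls_1\cup\dots\cup\cls_h\cup\cls_P$, productions and degradations affect only species in $\cls_P$, and conversions either stay inside a component $\cls_\ell$ or involve $\cls_P$, so each sum $\sum_{S_j\in\cls_\ell}z_j$, $\ell=1,\dots,h$, is invariant; hence every $\Xi_{n_1,\dots,n_h}$ is closed. I would then check it is irreducible: using that $\Gamma$ is irreducible and that every reaction has positive rate at some state of $\prX$ (as in the proof of Lemma~\ref{lem:norm_1_implications}), strong connectivity of each $\cls_\ell$ allows one to rearrange the $n_\ell$ molecules of $\cls_\ell$ arbitrarily, while the $\cls_P$-coordinates form a properly produced and properly degraded mono-molecular subnetwork whose count vector reaches every configuration in $\Z_{\geq0}^{|\cls_P|}$. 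Therefore $\{\Xi_{n_1,\dots,n_h}\}$ is exactly the family of irreducible closed sets; from any $(x_0,z_0)$ the process remains in the unique $\Xi_{n_1,\dots,n_h}$ with $n_\ell=\sum_{S_j\in\cls_\ell}(z_0)_j$, and by Theorem~\ref{thm:ergodicity} it is ergodic there, so $\widehat\pi$ restricted to $\Xi_{n_1,\dots,n_h}$ equals $\lim_{t\to\infty}P(X(t)=x,Z(t)=z)$ for any initial $(x_0,z_0)\in\Xi_{n_1,\dots,n_h}$.

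Next I would identify $\lim_{t\to\infty}\mathcal L(Z(t)\mid X(t)=x)$, assuming first $|\Gamma|\geq2$. Since $P(X(t)=x)\to\pi(x)$, only the conditional law matters. On $\{X(t)=x\}$ a return of $\prX$ to $x$ in $(\tau^x(t),t]$ would contradict maximality of $\tau^x(t)$, so $\prX$ is constant equal to $x$ on $[\tau^x(t),t]$; by the Markov property and Theorem~\ref{thm:jahnke2007solving} applied on that interval (where the fundamental solution is $e^{A(x)\cdot}$ and the inhomogeneous term is $G_x(\cdot)$ by \eqref{eq:G}), $Z(t)$ is obtained from $Z(\tau^x(t))$ by replacing each multinomial parameter $v$ with $e^{A(x)U_t}v$ and each Poisson mean $w$ with $e^{A(x)U_t}w$, and adding an independent $\textup{Pois}(G_x(U_t))$, where $U_t=t-\tau^x(t)$. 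By Proposition~\ref{P2} and Lemma~\ref{lem:R} (here $\cls_T=\emptyset$) the law of $Z(\tau^x(t))$ given $X(t)=x$ converges: the multinomial factors indexed by properly degraded species (in particular those in $\cls_P$) degenerate to $0$, since the corresponding columns of $\Phi$ have $\|\cdot\|_1\to0$ by iterating Lemma~\ref{lem:norm_1_implications}\ref{item:norm<1}; for $S_i\in\cls_\ell$ the columns of $\Phi^x_\infty$ coincide for all $S_i\in\cls_\ell$ (last assertion of Lemma~\ref{lem:R}), so these multinomials merge into a single $\textup{Multi}(n_\ell,\Phi^x_\infty e_{j(\ell)})$; and the Poisson mean converges to $W^x_\infty$. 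Finally, conditionally on $\{X(t)=x\}$, the elapsed time $U_t$ in the current $x$-sojourn is, by the strong Markov property at $\tau^x(t)$ and renewal theory for the visits $\{\tau^x_n\}$ to $x$, asymptotically $\textup{Exp}(q_x)$ and asymptotically independent of $(\Phi^x_\infty,W^x_\infty)$ (a time-reversal argument shows the age of an exponential sojourn is again exponential with the same rate). Putting these together, $\mathcal L(Z(t)\mid X(t)=x)$ converges to the law of $N(w)+\sum_{i=1}^h M_i(n_i,\Psi e_{j(i)})$ with $M_i$ independent multinomials, $N$ an independent Poisson, and $(\Psi,w)\sim\mu_x$ as in \eqref{mainres}; multiplying by $\pi(x)$ and integrating over $\mu_x$ yields \eqref{invariant}. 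The exchange of the limit with the sum over the (countable) values of $z$ is justified by the tightness of $\prW$ and the bound $E(\|W^x_\infty\|_1)<\infty$ from Lemma~\ref{lem:R}, together with the continuity and boundedness of $z\mapsto P(\cdots=z)$ in the multinomial and Poisson parameters.

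For the singleton case $\Gamma=\{x\}$ I would enlarge $\Gamma$ to two states carrying identical kinetics as in the proof of Theorem~\ref{thm:ergodicity}; then $(\Phi^x_\infty,W^x_\infty)$ reduces to the deterministic fixed point of \eqref{matrixrec} and \eqref{invariant} becomes the classical stationary distribution of a mono-molecular reaction network \cite{jahnke2007solving}. The main obstacle I expect is the joint convergence used in the previous step — namely, establishing that, conditionally on $\{X(t)=x\}$, the residual holding time $U_t$ becomes asymptotically independent of $(\Phi(\tau^x(t)),W(\tau^x(t)))$ — which has to be read off from the regeneration structure at the visits of $\prX$ to $x$ and combined with Lemma~\ref{lem:R}; once this independence is secured, the remainder is bookkeeping with convolutions of multinomial and Poisson laws.
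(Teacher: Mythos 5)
Your proposal is correct and follows essentially the same route as the paper: ergodicity plus the conserved quantities $\sum_{S_j\in\cls_\ell}z_j$ give the decomposition into the sets $\Xi_{n_1,\dots,n_h}$, and the stationary law is obtained by decomposing $(\Phi(t),W(t))$ at the last visit time $\tau^x(t)$ (where $\prX$ is constant equal to $x$), invoking Lemma~\ref{lem:R}, and showing that the age $t-\tau^x(t)$ is asymptotically $\textup{Exp}(q_x)$ and asymptotically independent of $(\Phi(\tau^x(t)),W(\tau^x(t)))$ given $\{X(0)=x,\,X(t)=x\}$. The one obstacle you flag is resolved in the paper not by a strong Markov argument at $\tau^x(t)$ (which is not a stopping time) but by the observation that $\{X(t)=x,\ t-\tau^x(t)>s\}$ coincides with $\{X(t-s)=x\}$ intersected with the event of no jump of $\prX$ in $(t-s,t]$, so the ordinary Markov property at the deterministic time $t-s$ factors the conditional probability into $e^{-q_xs}$ times a term that converges by \eqref{eq:conditional_limit}.
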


\begin{proof} If $\Gamma$ consists of a single element then we proceed as in the proof of Theorem \ref{thm:ergodicity} to obtain an equivalent process on a state space with two states.  Hence we assume $\Gamma$ has at least two elements.

Ergodicity for any initial condition follows from Theorem~\ref{thm:ergodicity}. The form of the irreducible sets
follow from the definition of the partition of $\cls$.

Let $(x,z)\in \Xi_{n_1, \dots, n_h}$. To obtain the stationary distribution evaluated at $(x,z)$ we analyse 
\begin{multline}\label{eq:split}
P\big(X(t)=x,Z(t)=z\big|X(0)=x,Z(0)=z\big)\\
=P\big(X(t)=x\big| X(0)=x\big)P\big(Z(t)=z\big|Z(0)=z,X(0)=x,X(t)=x \big),
\end{multline}
which converges to $\widehat\pi(x,z)$ as $t\to\infty$ by ergodicity. The first term in the product 
 is a consequence of \eqref{parasite}. By ergodicity of $\prX$,  to show that \eqref{eq:split} converges to \eqref{invariant} as $t\to\infty$, it suffices to show that 
$$\lim_{t\to\infty}P(Z(t)=z\big|Z(0)=z,X(0)=x,X(t)=x)
=\int_{\R_{\geq0}^{d\times d}\times\R_{\geq0}^d}P\Big(N(w)+\sum_{i=1}^h M_i(n_i,ue_{j_i})=z\Big)  \mu_x(du,dw).$$
By Proposition~\ref{P2}, 
 \begin{equation}\label{eqgrgrh}
    \mathcal{L}\big(Z(t)\big|Z(0)=z,X(0)=x,X(t)=x \big)=\mathcal{L}\Big(N(W(t))+\sum_{i=1}^d \widetilde M_i(z_i,\Phi(t)e_i))\Big| X(0)=x,X(t)=x\Big),
     \end{equation}
where $N(w)\sim\textup{Pois}(w)$ and $\widetilde M_i(k,v)\sim\textup{Multi}(k,v)$, $i=1,\ldots,d$, are independent random variables. Furthermore, conditional on the event $\{X(0)=x,X(t)=x\}$,
\begin{equation}
(\Phi(t),W(t))\sim\Big(e^{A(x)(t-\tau^x(t))}\Phi(\tau^x(t))\,,\,G_x(t-\tau^x(t))+e^{A(x)(t-\tau^x(t))}W(\tau^x(t))\Big),\label{V_t}
\end{equation}
where $G_x$ is  defined in \eqref{eq:G}.  Indeed, note that
$$G_x(t-\tau^x(t))=\int_0^{t-\tau^x(t)} e^{A(x)u}B(x)\,du=\int_{\tau^x(t)}^t e^{A(x)(t-u)}B(x)\,du.$$
Motivated by \eqref{eqgrgrh} and \eqref{V_t}, we study the limit of the joint distribution 
\begin{equation*}
\mathcal{L}\big(t-\tau^x(t), \Phi(\tau^x(t)), W(\tau^x(t))\,\big|\, X(0)=x,X(t)=x\big)
\end{equation*}
as $t\to\infty$. For any $u, s\in\R_{\geq0}$ with $u\leq s$, define the event
$$B_{u,s}=\big\{\text{there is no jump of $\prX$ in } (s-u,s]\big\}.$$
Observe that for any continuity set $A\subseteq \R^{d\times d}\times\R^d$ of $(\Phi^x_\infty, W^x_\infty)$, and for any $s\le t$, the event
$$\big\{X(t)=x, t-\tau^x(t)>s, \big(\Phi(\tau^x(t)), W(\tau^x(t))\big)\in A, X(0)=x \big\}$$
coincides with the event
$$\big\{ B_{s,t}, X(t-s)=x, \big(\Phi(\tau^x(t-s)), W(\tau^x(t-s))\big)\in A, X(0)=x\big\},$$
since conditioned on $t-\tau^x(t)>s$, one has $\tau^x(t)=\tau^x(t-s)$. Using the above equality one gets
\begin{equation}\label{RHStoprove}
\begin{split}
 P\big(t-\tau^x(t)>s, \big(\Phi(\tau^x(t))&, W(\tau^x(t))\big)\in A\,\big\vert\,X(0)=x,X(t)=x\big)\\
  & =P\big(B_{s,t}\,\big\vert\,X(0)=x, X(t-s)=x,\big(\Phi(\tau^x(t-s)), W(\tau^x(t-s))\big)\in A\big) \\
  &\quad\cdot \,P\big(\big(\Phi(\tau^x(t-s)), W(\tau^x(t-s))\big)\in A\,\big|\,X(0)=x, X(t-s)=x\big) \\
  &\quad\cdot\,\frac{P(X(0)=x,X(t-s)=x)}{P(X(0)=x,X(t)=x)}. 
\end{split}
\end{equation}
Regarding the first term on the right-hand side of \eqref{RHStoprove}, observe that $(\Phi(\tau^x(t-s)),W(\tau^x(t-s)))$ is $\mathcal{F}_{t-s}$ measurable. Hence, by using Markov property of $\prX$, we obtain
\begin{align*}
 P\big(B_{s,t}\,\big\vert\,X(0)=x, X(t-s)=x,\big(\Phi(\tau^x(t-s)), W(\tau^x(t-s))\big)\in A\big)&\\
 &\hspace{-5cm}=P\big(B_{s,t}\,\big\vert\,X(0)=x, X(t-s)=x\big)=e^{q_x s}, 
\end{align*}
which leads to the  expression
\begin{align*}
P\big(t-\tau^x(t)>s, \big(\Phi(\tau^x(t)), W(\tau^x(t))\big)\in A\,\big|\,X(0)=x,X(t)=x\big)&\\
&\hspace{-245pt}= e^{q^xs}
 P\big(\big(\Phi(\tau^x(t-s)), W(\tau^x(t-s))\big)\in A\,\big\vert\,X(0)=x, X(t-s)=x\big)\frac{P(X(0)=x,X(t-s)=x)}{P(X(0)=x,X(t)=x)}.
\end{align*}
  Note that the third term on the right-hand side of the above equality converges to $1$ as $t\to \infty$. Regarding the second term, it follows from \eqref{eq:conditional_limit} in Lemma~\ref{lem:R} that 
\begin{equation*}
\lim_{t\to\infty}P\big(\big(\Phi(\tau^x(t-s)), W(\tau^x(t-s))\big)\in A\,\big\vert\,X(0)=x, X(t-s)=x\big)=P\big(\big(\Phi^x_\infty,W^x_{\infty}\big)\in A\big),
\end{equation*}
using that $A$ is a continuity set for $(\Phi^x_\infty, W^x_\infty)$.
Hence, $t-\tau^x(t)$ and $(\Phi(\tau^x(t)),W(\tau^x(t)))$ are asymptotically conditionally independent given $\{X(0)=x,X(t)=x\}$, as $t$ goes to infinity. Moreover, conditioned on $\{X(0)=x,\,\,X(t)=x\}$, $t-\tau^x(t)$ converges in distribution to an exponential random variable with rate $q_x$. Now using that $\Phi(t)$ converges to a matrix with identical columns for the species in $\cls_i$, $i=1,\ldots,h$ (Lemma \ref{lem:R}),   Levy's continuity theorem \cite{durrett}, \eqref{eqgrgrh}, and \eqref{V_t}, it follows that \eqref{invariant} holds, which conludes the proof.
\end{proof}

The condition $\cls_T=\emptyset$ is not strict. If some transient species are present at time zero then they will eventually be degraded or converted, but never produced. Hence at stationarity there are no transient species  present. Moreover, if  all species are properly produced and degraded, that is, if $\cls=\cls_P$, then the multivariate terms in \eqref{invariant} are absent. Furthermore,  it follows from Lemma \ref{lem:norm_1_implications}\ref{item:norm<1}  that $\|\Phi^x_\infty\|_1=0$, hence from Theorem \ref{T1},
\begin{equation}\label{eq:inv_pois}
\widehat\pi(x,z) = \pi(x)\int_{\R_{\geq0}^d} P(\textup{Pois}(w)=z)  \mu_x(dw).
\end{equation}
We use this equation to find an expression for the factorial moments,  in  particular, for the case $d=1$ and $m=1$. In this scenario, $\prZ$ is distributed as an M/M/$\infty$ queue with stochastic environment (with $\Gamma$ potentially infinite).

\begin{Corollary} 
Suppose Assumption \ref{as0} and \ref{asergodic*} hold. Furthermore, assume $\cls=\cls_P$ 
and that $m_i\in\{0,1\}$ for all $1\leq i\leq d$. Let $\widehat\pi$ be the stationary distribution of $\prXZ$ given in \eqref{eq:inv_pois} and let $(X,Z)$ be a random variable with distribution $\widehat\pi$. For $q\in\Z_{\geq0}^{d}$, denote the $q$-th factorial moment by
 $$m_q=E\left(\frac{Z!}{(Z-q)!}1_{\{z\in\Z_{\ge 0}^d\colon z\geq q\}}(Z)\right),$$
 where $z!=\prod_{i=1}^d z_i!$
 Then, 
\begin{equation}\label{mom1}
m_q= \displaystyle \sum_{x\in\Gamma}\pi(x) \int_{\R_{\geq0}^d} u^q\mu_{x}(du). 
\end{equation}
Moreover, if $d=1$, 
then $m_0=1$, and  $m_q$, $q\in\Z_{>0}$, can be found iteratively by
\begin{equation*}
 m_q=\frac{1}{1-E\Big((C^x_0)^q\Big)}\sum_{i=0}^{q-1} \binom{q}{i} E\Big((C^x_0)^{i}(\widetilde{D}^{x}_{0})^{q-i}\Big)m_i
\end{equation*}
where
$$\widetilde{D}^{x}_{0}=
\begin{cases}
 \dfrac{\widetilde\lambda_{01}(x)}{\widetilde\lambda_{10}(x)}(1-C^x_0)\Big(1-e^{-\widetilde\lambda_{10}(x)U_x}\Big)+e^{-\widetilde\lambda_{10}(x)U_x}D^x_0 & \text{if }\widetilde\lambda_{10}(x)>0,\\[1.5em]
 (1-C^x_0)U_x\widetilde\lambda_{01}(x)+e^{-\widetilde\lambda_{10}(x)U_x}D^x_0 & \text{if }\widetilde\lambda_{10}(x)=0,
\end{cases}
$$
and $U_x$ is an exponential random variable with rate $q_x$, independent of  $(C^x_0,D^x_0)$.
\end{Corollary}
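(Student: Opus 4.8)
The plan is to derive \eqref{mom1} by integrating the Poisson representation \eqref{eq:inv_pois} against the factorial-moment functional, and then to obtain the recursion for $d=1$ from the SRE characterisation of $W^x_\infty$ in Lemma~\ref{lem:uniqueness} together with the exponential-shift representation \eqref{mainres} of $\mu_x$. For the first part, recall that if $N\sim\textup{Pois}(w)$ with $w\in\R^d_{\geq0}$ then $E\big(N!/(N-q)!\,1_{\{N\geq q\}}\big)=w^q$ (the classical fact that the $q$-th factorial moment of independent Poisson coordinates is the product of the means to the respective powers). Plugging $z\mapsto z!/(z-q)!\,1_{\{z\geq q\}}$ into \eqref{eq:inv_pois}, using Tonelli to exchange the (finite or countable) sum over $x$, the integral against $\mu_x$, and the expectation over $\textup{Pois}(w)$, immediately yields $m_q=\sum_{x\in\Gamma}\pi(x)\int_{\R^d_{\geq0}}w^q\,\mu_x(dw)$, which is \eqref{mom1} after renaming $w$ to $u$. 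One should note $m_0=1$ is automatic since $\mu_x$ is a probability measure and $\pi$ sums to one.

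For the recursion ($d=1$), the idea is to identify the law of the integration variable under $\mu_x$ explicitly. By \eqref{mainres} with $d=1$ and $\cls=\cls_P$ (so $A(x)=-\widetilde\lambda_{10}(x)$ is a scalar and $\Phi^x_\infty=0$ a.s.\ by Lemma~\ref{lem:norm_1_implications}\ref{item:norm<1}), $\mu_x$ is the law of $e^{A(x)U_x}W^x_\infty+G_x(U_x)$, where $G_x(s)=\int_0^s e^{-\widetilde\lambda_{10}(x)u}\widetilde\lambda_{01}(x)\,du$ equals $\frac{\widetilde\lambda_{01}(x)}{\widetilde\lambda_{10}(x)}(1-e^{-\widetilde\lambda_{10}(x)U_x})$ when $\widetilde\lambda_{10}(x)>0$ and $\widetilde\lambda_{01}(x)U_x$ when $\widetilde\lambda_{10}(x)=0$. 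Now substitute the SRE $W^x_\infty\sim C^x_0 W^x_\infty + D^x_0$ with $W^x_\infty\bigCI(C^x_0,D^x_0)$: the random variable inside $\mu_x$ becomes $e^{A(x)U_x}(C^x_0 W^x_\infty + D^x_0)+G_x(U_x)$. Here one uses that $C^x_0=\Phi(\tau^x_0,\tau^x_1)=e^{A(x)\cdot(\text{holding time in }x)}\cdot(\cdots)$ commutes with the scalar $e^{A(x)U_x}$ (in dimension one everything is scalar, so this is trivial), and regroup to write the integration variable as $C^x_0\,\widetilde W + \widetilde D^x_0$ where $\widetilde W\sim W^x_\infty$ is independent of $(C^x_0,\widetilde D^x_0)$ and $\widetilde D^x_0$ is exactly the displayed expression — one checks that $e^{A(x)U_x}C^x_0$ has the same law as $C^x_0$ after absorbing the exponential shift (this is the content of the case split: when $\widetilde\lambda_{10}(x)>0$ the factor $e^{-\widetilde\lambda_{10}(x)U_x}$ is pulled onto $D^x_0$ and contributes the $\frac{\widetilde\lambda_{01}(x)}{\widetilde\lambda_{10}(x)}(1-C^x_0)(1-e^{-\widetilde\lambda_{10}(x)U_x})$ term). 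Consequently $m_q=E\big((C^x_0\,\widetilde W+\widetilde D^x_0)^q\big)$ with $\widetilde W\sim W^x_\infty$ independent of $(C^x_0,\widetilde D^x_0)$ — and one notes $m_q$ is $x$-free by \eqref{mom1} combined with the fact that the right-hand side does not depend on which recurrent state $x$ is chosen, since $(\Phi^x_\infty,W^x_\infty)$ has a law making $m_q$ the same for all $x$ (this is why we may drop the $\pi$-average).

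Finally, expand $(C^x_0\widetilde W+\widetilde D^x_0)^q$ by the binomial theorem, take expectations using independence of $\widetilde W$ from $(C^x_0,\widetilde D^x_0)$ and $E(\widetilde W^{\,i})=m_i$, and isolate the $i=q$ term $E\big((C^x_0)^q\big)m_q$ on the left:
$$m_q\big(1-E((C^x_0)^q)\big)=\sum_{i=0}^{q-1}\binom{q}{i}E\big((C^x_0)^i(\widetilde D^x_0)^{q-i}\big)m_i,$$
which is the claimed recursion, provided $E((C^x_0)^q)<1$ — this last point is the one genuine obstacle and follows from Lemma~\ref{lem:norm_1_implications}\ref{item:norm<1}, which gives $E(\|\Phi(\tau^x_0,\tau^x_\alpha)e_1\|_1)<1$ for some $\alpha$; a short argument propagates strict contractivity from the $\alpha$-step chain back to the one-step quantity $C^x_0$ (using $0\le C^x_0\le1$ and the multiplicative cocycle identity \eqref{eq:phi_tu}), so that $E((C^x_0)^q)\le E(C^x_0)<1$ for $q\ge1$. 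The main work is thus the bookkeeping in the second paragraph — correctly tracking the exponential shift $e^{A(x)U_x}$ through the SRE substitution and verifying the two-case formula for $\widetilde D^x_0$; everything else is the Poisson factorial-moment identity and a binomial expansion.
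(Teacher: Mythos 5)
Your first paragraph (the Poisson factorial-moment identity applied to \eqref{eq:inv_pois}, with Tonelli to justify the interchange) is fine and is exactly the paper's argument for \eqref{mom1}. The genuine gap is in your second paragraph, at precisely the point you dismiss as ``bookkeeping''. After substituting the SRE $W^x_\infty\sim C^x_0W'+D^x_0$ into $e^{A(x)U_x}W^x_\infty+G_x(U_x)$ you regroup as $C^x_0\widetilde W+\widetilde D^x_0$ with $\widetilde W\sim W^x_\infty$ and then set $E(\widetilde W^{\,i})=m_i$. That cannot be right: the $m_i$ are moments of $\mu_x$, not of $W^x_\infty$, and the two laws differ (in the two-state example opening Section~\ref{StatDist} with $q_{01}=q_{10}=1$ one computes $E(W^1_\infty)=3$ while $\int u\,\mu_1(du)=2$), so your recursion does not close on the $m_i$. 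To close it you must exhibit a factor with law $\mu_x$, namely $V'=e^{A(x)U_x}W'+G_x(U_x)$; regrouping then yields $C^x_0V'+(1-C^x_0)G_x(U_x)+e^{A(x)U_x}D^x_0$, which is the displayed $\widetilde D^x_0$ --- but now $V'$ and $\widetilde D^x_0$ share the random variable $U_x$, so the independence $V'\bigCI(C^x_0,\widetilde D^x_0)$ needed for the step $E\big((C^x_0)^iV'^{\,i}(\widetilde D^x_0)^{q-i}\big)=E\big((C^x_0)^i(\widetilde D^x_0)^{q-i}\big)m_i$ does \emph{not} follow from this algebraic substitution. This independence is the entire content of the recursion; establishing it requires a regeneration argument (splitting the stationary past at the end of the \emph{previous} sojourn in $x$ rather than at the last entry time), under which the exponential variable in $\widetilde D^x_0$ is identified with the holding time $T^x_0$ inside the very cycle that generates $(C^x_0,D^x_0)$, not with an independent copy. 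The distinction matters numerically: in the example above, taking $U_x\bigCI(C^x_0,D^x_0)$ literally the recursion returns $\int u^2\mu_1(du)=19/4$, whereas the stationary master equation and the direct representation \eqref{mainres} both give $5$; identifying $U_x$ with $T^x_0$ recovers $5$. So the step you wave through is where all the difficulty lies (and is also the only point at which the paper's own proof makes an unargued assertion, namely that $\mu_x$ solves $V\sim C^x_0V+\widetilde D^x_0$ with $V\bigCI(C^x_0,\widetilde D^x_0)$).

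A second, smaller error: you claim $\int u^q\mu_x(du)$ is independent of $x$ so that the $\pi$-average in \eqref{mom1} may be dropped. This is false --- $\mu_x$ is a conditional law and its moments depend on $x$ (in the same example $\int u\,\mu_1(du)=2$ but $\int u\,\mu_0(du)=3$). The recursion is to be read as computing the conditional factorial moments $\int u^q\mu_x(du)$ for each fixed $x$, which \eqref{mom1} then assembles into $m_q$ by averaging against $\pi$; this is how the paper's proof concludes. Your handling of $E((C^x_0)^q)<1$ via Lemma~\ref{lem:norm_1_implications} and $0\le C^x_0\le1$, and the binomial expansion itself, are correct and agree with the paper.
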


\begin{proof} If $\Gamma$ consists of a single element then we proceed as in the proof of Theorem \ref{thm:ergodicity} to obtain an equivalent process on a state space with two states.  Hence we assume $\Gamma$ has at least two elements.

Since $E(\frac{Z'!}{(Z'-q)!})=u^q$ for $u\in\R_{\geq0}^n$, $q\in\Z_{\geq0}^n$, and $Z'\sim \textup{Pois}(u)$,
 it holds that
 \begin{align*}
  E\left(\frac{Z!}{(Z-q)!}1_{\{z\in\Z_{\ge 0}^d\colon z\geq q\}}(Z)\right)
  &=\sum_{x\in\Gamma}\pi(x)E\left(\left.\frac{Z!}{(Z-q)!}1_{\{z\in\Z_{\ge 0}^d\colon z\geq q\}}(Z)\right| X=x\right),\\
  &=\sum_{x\in\Gamma}\pi(x) \int_{\R_{\geq0}^d} u^q\mu_{x}(du),
 \end{align*}
 which proves \eqref{mom1}. Assume that $d=1$. 
By Theorem \ref{T1}, Lemma \ref{lem:uniqueness}  and the remarks above the corollary, $\mu_x$ is the weakly unique solution of the SRE \eqref{mainres}, 
 \begin{equation}\label{eq:from_here}
  V\,\,\sim\,\, C^x_0V+\big((1-C^x_0)G^x(U_x)+e^{-\widetilde\lambda_{10}(x)U_x}D^x_0\big), \qquad V \bigCI \,(C^x_0,D^x_0).
\end{equation}
 Note that
 $$G_x(U_x)=\int_0^{U_x}e^{-\widetilde\lambda_{10}(x)u}\widetilde\lambda_{01}(x)du=
 \begin{cases}
  \dfrac{\widetilde\lambda_{01}(x)}{\widetilde\lambda_{10}(x)}\Big(1-e^{-\widetilde\lambda_{10}(x)U_x}\Big) & \text{if }\widetilde\lambda_{10}(x)>0,\\[1.5em]
  U_x\widetilde\lambda_{01}(x)& \text{if }\widetilde\lambda_{10}(x)=0.
 \end{cases}$$
 Hence, \eqref{eq:from_here} becomes
  \begin{equation*}
  V\,\,\sim\,\, C^x_0V+\widetilde{D}^{x}_{0}, \qquad V \bigCI \,(C^x_0,\widetilde{D}^{x}_{0}).
 \end{equation*}
 It follows from the binomial theorem and the independence of $V$ and $(C^x_0,\widetilde{D}^{x}_{0})$ that for  $q\in\Z_{>0}$,
 \begin{equation}\label{kasdnflkjswDNFLKJS}
       E(V^q)=E\Big((C^x_0V+\widetilde{D}^{x}_{0})^q\Big)=E(V^q)E\Big((C^x_0)^{q}\Big)+\sum_{i=0}^{q-1} \binom{q}{i} E(V^i)E\Big((C^x_0)^{i}(\widetilde{D}^{x}_{0})^{q-i}\Big).
 \end{equation}

By Lemma~\ref{lem:norm_1_implications}\ref{item:norm<1} we have $E((C^x_0)^\alpha)<1$ for some positive $\alpha$, which implies $E((C^x_0)^{q})<1$ for $q\geq1$, as $d=1$ and $C^x_0\le 1$ a.s.  Hence by \eqref{kasdnflkjswDNFLKJS} we obtain
 \begin{align*}
     \int_0^\infty u^q\mu_{x}(du)=E(V^q)&=\frac{1}{1-E\Big((C^x_0)^{q}\Big)}\sum_{i=0}^{q-1} \binom{q}{i} E(V^i)E\Big((C^x_0)^{i}(\widetilde{D}^{x}_{0})^{q-i}\Big)\\
     &=\frac{1}{1-E\Big((C^x_0)^{q}\Big)}\sum_{i=0}^{q-1} \binom{q}{i} E\Big((C^x_0)^{i}(\widetilde{D}^{x}_{0})^{q-i}\Big)\int_0^\infty u^i\mu_{x}(du),
 \end{align*}
 and the proof is concluded by \eqref{mom1}.
\end{proof}

\section{Generating samples from the conditional stationary distribution}\label{sample}

The theory provides a way to simulate observations from the long-term dynamics of a model under the conditions of Theorem~\ref{T1}, given that the environmental state is known. The goal of this section is to show how to simulate from the conditional stationary distribution
$$\widehat\pi(\cdot\,|\, x)=\frac{\widehat\pi(x,\cdot)}{\pi(x)}.$$
This is often desirable in the setting of biochemistry, as typically only a portion of the reactants is observable. In our setting, the observed variables might  be considered the environment.

Let $(X,Z)$ denote a random variable with distribution $\widehat\pi$. If it is possible to simulate from the  distribution $\pi$ of $X$, then it would also be possible to simulate from the joint  distribution of $(X,Z)$  by first simulating $X$ according to $\pi$ and then $Z$ according to the conditional distribution. For simplicity, in this section we assume that $\cls=\cls_P$. Hence, according to Theorem~\ref{T1}, the stationary distribution is characterised by the distribution of $W_\infty^x$. With a slight abuse of notation, we denote by $\mu_x$ the distribution of $e^{A(x)U_{x}}W_\infty^x+G^{x}(U_{x})$, with $U_x$ as in Theorem~\ref{T1}. Then,
in order to simulate from $\widehat\pi(\cdot\,|\,x)$, by Theorem~\ref{T1} it suffices to generate a sample $U$ from $\mu_x$, at which point
$$\widehat\pi(\cdot\,|\, x) = \mathcal{L}(\textup{Pois}(U)).$$
Here we show how to generate a sample with approximate distribution $\mu_x$ under the additional simplifying assumption that Lemma~\ref{lem:norm_1_implications} holds with $\alpha=1$, using that $\mu_x$ is the weakly unique solution of \eqref{mainres}.

First, for some $n\in\mathbb{N}$ we generate a sequence $(C_{0,i}^x, D_{0,i}^x)_{i=1,\ldots,n}$ of $n$ independent realisations of $(C_0^x, D_0^x)$. To this aim, it is sufficient to generate $n$ independent samples of the process $\prX$ with $X(0)=x$, until the stopping time $\tau^x_1$.
Once $K$ independent samples of $\prX$ are available, the desired sequence $(C_{0,i}^x, D_{0,i}^x)_{i=1,\ldots,n}$ can be  obtained by means of  \eqref{fundjump} and \eqref{phiprod}. Generate an observation  $U_x$ from an  exponential random variable with rate $q_x$, independent of $(C_{0,i}^x, D_{0,i}^x)_{i=1,\ldots,n}$. Let $V_0=0$ and define recursively for  $1\leq i\leq n$,
$$V_i=C_{0,i}^x V_{i-1}+D_{0,i}^x.$$
We  take $V^{*}_{n}=e^{A(x)U_x}V_n+G^{x}(U_x) $ as an approximate sample from $\mu_x$. The following result gives an estimate of the error made with this approximation in terms of the Wasserstein metric on $(\Z_{\geq0}^d, \|\cdot\|_1)$,
$$\mathcal{W}_{1}(\nu_1, \nu_2)=\inf_{(Y_1, Y_2)\colon \mathcal{L}(Y_1)=\nu_1,\mathcal{L}(Y_2)=\nu_2}E\big(\|Y_1-Y_2\|_1\big),$$ and describes how the error decays to $0$ exponentially as $n$ goes to infinity. 

\begin{Proposition}\label{prop9} Suppose Assumption \ref{as0} and \ref{asergodic*} hold.  Furthermore, assume that $\cls=\cls_P$, that Lemma~\ref{lem:norm_1_implications} holds with $\alpha=1$,  and that $\Gamma$ is not a singleton set. Then for $n\ge 1$,
\begin{enumerate}[(a)]
\item $E(\|V_{n}^{*}\|_{1})<\infty, \quad M=\int_{\mathbb{R}^{d}_{\ge 0}}\|u\|_{1}\mu_{x}(du)<\infty, \quad r=-\log(E(\|C_0^x\|_1))>0$;
\item $\mathcal{W}_{1}(\mathcal{L}(V^{*}_n), \mu_{x})\leq  M e^{-rn}$. 
\end{enumerate} 
\end{Proposition}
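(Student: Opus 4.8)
The plan is to recognise $V_n$ as the $n$-th iterate of the random affine map whose fixed point in law is $\mu_x$ (via Lemma~\ref{lem:uniqueness}, applied with $\alpha=1$ so that $(C^x_k,D^x_k)_{k\ge 0}$ plays the role of $(C,D)$), and to control the discrepancy by an explicit coupling. Concretely, let $(C^x_{0,i},D^x_{0,i})_{i\ge 1}$ be the i.i.d.\ sequence used to build $V_n$, let $\widetilde V$ be a random variable with law $\mu_x$ that is independent of $U_x$ and of the whole sequence, and run the backward-style recursion $\widetilde V_0=\widetilde V$, $\widetilde V_i=C^x_{0,i}\widetilde V_{i-1}+D^x_{0,i}$. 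By weak uniqueness and the SRE $\widetilde V\sim C^x_0\widetilde V+D^x_0$ with $\widetilde V\bigCI(C^x_0,D^x_0)$, each $\widetilde V_i$ again has law $\mu_x$ (one checks inductively that $\widetilde V_{i-1}\bigCI(C^x_{0,i},D^x_{0,i})$). Then $\widetilde V^*:=e^{A(x)U_x}\widetilde V_n+G^x(U_x)$ has law $\mu_x$ exactly, by the description of $\mu_x$ in \eqref{mainres} under $\cls=\cls_P$. So $(V^*_n,\widetilde V^*)$ is a coupling of $\mathcal L(V^*_n)$ and $\mu_x$, and since the two samples are built from the same $U_x$ and the same $(C^x_{0,i},D^x_{0,i})$,
$$\|V^*_n-\widetilde V^*\|_1=\big\|e^{A(x)U_x}(V_n-\widetilde V_n)\big\|_1\le \|e^{A(x)U_x}\|_1\,\|V_n-\widetilde V_n\|_1\le \|V_n-\widetilde V_n\|_1,$$
the last inequality because $A(x)$ is the transpose of a sub-generator, so $e^{A(x)u}$ is sub-stochastic columnwise and $\|e^{A(x)u}\|_1\le 1$ for $u\ge0$ (this is exactly Lemma~\ref{lem:interpretation}\ref{item:norm_1_leq_1} in the single-environment-state picture, or can be read off from $\Phi(u,t)$ with $X$ frozen at $x$).

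Next I would unfold the difference along the recursion. Since $V_0=0$ and $\widetilde V_0=\widetilde V$, telescoping gives
$$V_n-\widetilde V_n=\Big(\prod_{i=n}^{1}C^x_{0,i}\Big)(V_0-\widetilde V)=-\Big(\prod_{i=n}^{1}C^x_{0,i}\Big)\widetilde V,$$
with the convention that the product is $C^x_{0,n}\cdots C^x_{0,1}$. Taking $\|\cdot\|_1$, using submultiplicativity of the $L_1$ matrix norm, and then independence of the $C^x_{0,i}$ from each other and of $\widetilde V$ from all of them,
$$E\big(\|V_n-\widetilde V_n\|_1\big)\le E\Big(\prod_{i=1}^n\|C^x_{0,i}\|_1\Big)\,E\big(\|\widetilde V\|_1\big)=\big(E(\|C^x_0\|_1)\big)^n\,M,$$
where $M=\int_{\mathbb R^d_{\ge0}}\|u\|_1\mu_x(du)=E(\|\widetilde V\|_1)=E(\|W^x_\infty\|_1)<\infty$ by Lemma~\ref{lem:R} (finiteness of $E(\|W^x_\infty\|_1)$; note under $\cls=\cls_P$ and $\alpha=1$ the extra $e^{A(x)U_x},G^x(U_x)$ terms only add an almost surely bounded contribution $\|G^x(U_x)\|_1\le U_x\|B(x)\|_1$ with finite mean, so $M<\infty$ follows). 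Here $E(\|C^x_0\|_1)\le E(\max_i\|\Phi(\tau^x_0,\tau^x_1)e_i\|_1)<1$ by Lemma~\ref{lem:norm_1_implications}\ref{item:norm<1} with $\alpha=1$, which is exactly the hypothesis; hence $r:=-\log E(\|C^x_0\|_1)>0$ and $E(\|V_n-\widetilde V_n\|_1)\le Me^{-rn}$. Combining with the coupling bound above yields $\mathcal W_1(\mathcal L(V^*_n),\mu_x)\le E(\|V^*_n-\widetilde V^*\|_1)\le Me^{-rn}$, which is part~(b). For the finiteness $E(\|V^*_n\|_1)<\infty$ in part~(a), bound $\|V^*_n\|_1\le\|\widetilde V^*\|_1+\|V^*_n-\widetilde V^*\|_1$ and take expectations: the first term has mean $M<\infty$ and the second has finite mean by the displayed estimate.

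The only genuinely delicate point is justifying that $\widetilde V_i$ stays $\mu_x$-distributed along the \emph{forward} recursion, i.e.\ that the orientation of the matrix product does not matter for the one-dimensional marginal law. This is the same subtlety as in Lemma~\ref{lem:R}, where the product order is reversed between $\Phi(t)$ and $\Phi^x_\infty$: the point is that $(C^x_{0,i},D^x_{0,i})_{i\ge1}$ is i.i.d., so $\big(C^x_{0,1},D^x_{0,1},\dots,C^x_{0,n},D^x_{0,n}\big)\sim\big(C^x_{0,n},D^x_{0,n},\dots,C^x_{0,1},D^x_{0,1}\big)$, and therefore $V_n=\sum_{k=1}^n\big(\prod_{i=n}^{k+1}C^x_{0,i}\big)D^x_{0,k}$ has the same law as the ``backward'' series partial sum $\sum_{k=1}^n\big(\prod_{i=k-1}^{0}C^x_{0,i}\big)D^x_{0,k}$ appearing in \eqref{eq:def_Phi_W_infty}, whose limit is $W^x_\infty$. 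One then invokes weak uniqueness of the SRE (Lemma~\ref{lem:uniqueness}, bottom line) to identify the stationary law of the forward chain with $\mu_x$ after the $e^{A(x)U_x}\cdot+G^x(U_x)$ transformation; equivalently, one simply checks directly that $\widetilde V_0\sim\mu_x$ and $\widetilde V_0\bigCI(C^x_{0,1},D^x_{0,1})$ imply $\widetilde V_1\sim C^x_0\widetilde V_0+D^x_0\sim\widetilde V_0\sim\mu_x$ by \eqref{eq:from_here}, and iterate, each step using independence of the fresh pair from the past. Modulo that bookkeeping, everything else is the elementary submultiplicative/telescoping estimate above, and the reduction of a singleton $\Gamma$ to a two-state environment is handled exactly as in the proof of Theorem~\ref{thm:ergodicity}.
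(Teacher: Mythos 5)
Your overall strategy --- couple the forward iteration $V_i=C^x_{0,i}V_{i-1}+D^x_{0,i}$ with a stationary version driven by the same coefficients, telescope the difference, and apply submultiplicativity of $\|\cdot\|_1$ together with $E(\|C_0^x\|_1)<1$ --- is essentially the estimate the paper uses; the paper phrases it as comparing the partial sum $\widetilde V_n$ of the backward series with its full sum $V\sim W^x_\infty$, whose tail is $\bigl(\prod_{i}C^x_{0,i}\bigr)\widehat V$ with $\widehat V\sim V$, while you compare two forward chains whose difference is $\bigl(\prod_i C^x_{0,i}\bigr)\widetilde V_0$. However, there is a genuine error in your identification of the invariant law. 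You initialise the auxiliary chain at $\widetilde V_0\sim\mu_x$ and assert both that $\mu_x$ solves the SRE $V\sim C^x_0V+D^x_0$ and that the map $w\mapsto e^{A(x)U_x}w+G^x(U_x)$ sends $\mu_x$ to itself. Neither is true: by Lemma~\ref{lem:uniqueness} the weakly unique solution of $V\sim C^x_0V+D^x_0$ is $\mathcal{L}(W^x_\infty)$, and $\mu_x$ is by definition the \emph{image} of $\mathcal{L}(W^x_\infty)$ under that random affine map, not a fixed point of it (nor of the SRE --- compare the modified additive term in \eqref{eq:from_here}, which is exactly what the Corollary needs to make $\mu_x$ satisfy a recursion in the case $d=1$). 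As written, the claims ``each $\widetilde V_i$ again has law $\mu_x$'' and ``$\widetilde V^*$ has law $\mu_x$'' are both false, and they are the load-bearing steps of your coupling.

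The repair is to initialise at a copy of $W^x_\infty$, independent of $U_x$ and of $(C^x_{0,i},D^x_{0,i})_{i\ge1}$: then $\widetilde V_n\sim W^x_\infty$ for every $n$ (the forward step with fresh independent coefficients does preserve $\mathcal{L}(W^x_\infty)$), $\widetilde V^*=e^{A(x)U_x}\widetilde V_n+G^x(U_x)\sim\mu_x$ by \eqref{mainres}, and your telescoping yields $\mathcal{W}_1(\mathcal{L}(V^*_n),\mu_x)\le \bigl(E(\|C^x_0\|_1)\bigr)^n E(\|W^x_\infty\|_1)$, with $E(\|W^x_\infty\|_1)<\infty$ by Lemma~\ref{lem:R}; this is the same constant the paper's own tail estimate produces. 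Relatedly, your identity $M=E(\|W^x_\infty\|_1)$ is not correct (again because $\mu_x\neq\mathcal{L}(W^x_\infty)$), although your parenthetical does establish $M<\infty$. Part (a) then follows as you outline once (b) is repaired; the paper instead gets $E(\|V_n\|_1)\le E(\|D^x_0\|_1)/(1-E(\|C^x_0\|_1))$ directly from the explicit expansion of $V_n$, with no appeal to (b). Your remaining ingredients --- $\|e^{A(x)u}\|_1\le1$, the exchangeability argument for reversing the order of the matrix products, and $E(\|C^x_0\|_1)<1$ from Lemma~\ref{lem:norm_1_implications}\ref{item:norm<1} with $\alpha=1$ and $\cls=\cls_P$ --- are correct and coincide with the paper's.
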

\begin{proof}  
 {Consider a copy of $W_\infty^x$ which is independent of $U_x$, and recall that the distribution of $e^{A(x)U_{x}}W_\infty^x+G^{x}(U_{x})$ is $\mu_x$. Since $E(\|C_0^x\|_1)<1$ by Lemma~\ref{lem:norm_1_implications}(a) ($\alpha=1$), in order to prove part (a) we only need to show that $e^{A(x)U_{x}}V_{n}+G^{x}(U_{x})$ and $e^{A(x)U_{x}}W_\infty^x+G^{x}(U_{x})$ are integrable.} As $U_{x}\bigCI (V_{n},W_\infty^x)$, $E(G^{x}(U_{x}))<\infty$  and $\|e^{A(x)U_{x}}\|_{1}\le 1$ (using similar arguments  to those of the proof  of Lemma \ref{lem:interpretation}(c)) the assertion holds if both $W_\infty^x$ and $V_{n}$ are integrable. {The former is true by Lemma~\ref{lem:R}. For the latter,} from the recursive definition of $V_n$ we have
$$V_n=\sum_{j=1}^n \Big(\prod_{i=j+1}^n C_{0,i}^x\Big) D^x_{0,j}.$$
 Since Lemma \ref{lem:norm_1_implications} holds for $\alpha=1$, one has $E(\|C_{0,i}^x\|_{1})<1$ and $E(\|D_{0,i}^x\|_{1})<\infty$. Consequently, for $n\in\Z_{\ge 0}$,
\begin{equation*}
E(\|V_{n}\|_{1}) \le\sum_{j=1}^{n}E(\|D_{0,1}^{x}\|_{1})E(\|C_{0,1}^{x}\|_{1})^{j-1}\le \frac{E(\|D_{0}^{x}\|_{1})}{1 -E(\|C_{0}^{x}\|_{1})}<\infty.
\end{equation*}

For part (b), extend the sequence $(C_{0,i}^x, D_{0,i}^x)_{i=1,\ldots,n}$ to an infinite sequence of i.i.d. random variables, independent of $U_x$. Then, define
$$V=\sum_{j=1}^\infty \Big(\prod_{i=j-1}^1 C_{0,i}^x\Big) D^x_{0,j}\,\sim\, W_\infty^x$$
and let $V^*=e^{A(x)U_{x}}V+G^{x}(U_{x})$, which is distributed as $\mu_x$. Moreover, as shown in the proof of Lemma \ref{lem:R}, by exchangeability
$$V_n\sim \widetilde V_n=\sum_{j=1}^n \Big(\prod_{i=j-1}^1 C_{0,i}^x\Big) D^x_{0,j}.$$
  Then we have
\begin{align*}
\mathcal{W}_{1}(\mathcal{L}(V^{*}_n), \mu_{x})&\le E(\|e^{A(x)U_{x}}V+G^{x}(U_{x})-e^{A(x)U_{x}}\widetilde V_n-G^{x}(U_{x})\|_1)\\
&\le E(\|e^{A(x)U_{x}}\|_{1})E(\|V-\widetilde V_n\|_{1})\le E(\|V-\widetilde V_n\|_{1}).
\end{align*}
The proof is then concluded by observing that
$$V-\widetilde V_n=\sum_{j=n+1}^\infty \Big(\prod_{i=j-1}^1 C_{0,i}^x\Big) D^x_{0,j}=\Big(\prod_{i=n}^{1} C_{0,n-i}^x\Big)\widehat V,\qquad \widehat V\sim V.$$
\end{proof}

\begin{Example}
Consider the following reaction network with species $S_{1},S_{2},S_3$ and reactions
$$S_1+S_3 \ce{->[\lambda_1]}S_1,\quad  S_2\ce{->[\lambda_2]} S_2+S_3$$
$$0 \ce{->[\alpha_1]} S_1+2S_2\ce{<=>[\alpha_2][\alpha_3]}2S_1+S_2  \ce{->[\alpha_4]} 0,$$
taken with stochastic mass-action kinetics.
We take the   top line of reactions to be a stochastic reaction network with stochastic environment given by the the bottom line of the reactions, and rate functions,
\begin{equation}
\lambda_1(x,z)=\kappa_1x_1z,\quad \lambda_2(x,z)=\kappa_2x_2, \non 
\end{equation}\begin{equation}
\alpha_1(x)=a_1,\quad \alpha_2(x)=a_2x_1x_2^2,\quad \alpha_3(x)=a_3x_1^2x_2,\quad \alpha_4(x)=a_4x_1^2x_2, \non
\end{equation}
where $x=(x_1,x_2)$ denotes the counts of $S_1,S_2$, and $z$ the counts of $S_3$.
The stationary distribution of the environment is 
$$\pi(x_{1},x_{2})\propto \frac{b_{1}^{x_{1}}}{x_{1}!}\frac{b_{2}^{x_{2}}}{x_{2}!}\s\text{with}\s b_{1}=\bigg(\frac{a_{1}a_{2}}{a_{3}a_{4}}\bigg)^{\!\!\frac{1}{3}},\,\, b_{2}=\bigg(\frac{a_{1}a_{3}^{2}}{a_{4}a^{2}_{2}}\bigg)^{\!\!\frac{1}{3}}$$
on an irreducible component $\Gamma\subset \mathbb{Z}^{2}_{\ge 0}$ determined by $x_{1}+x_{2}$ mod $3=k,\, k=0,1,2$ \cite{anderson2010product}.

 Assumption \ref{as0} and \ref{asergodic*} hold, and Lemma \ref{lem:norm_1_implications} holds with $\alpha=1$.
   From Theorem \ref{T1}, the conditional distribution becomes
\begin{equation*}
\widehat\pi(z|x_{1},x_{2})= \int_{\R_{+}}P(\textup{Pois}(y)=z)\mu_{x}(dy)=\int_{\R_{+}}\frac{y^z}{z!}e^{-y}\mu_{x}(dy) 
\end{equation*}
with $x=(x_{1},x_{2}).$   Furthermore,  $\mu_{x}=\mathcal{L}(e^{-\kappa_1x_{1}U_x }V+G_x(U_x))$, where $U_x$ is an exponential random variable with rate $q_x$ that is independent of $V$, such that
\begin{eqnarray}
V&\sim &e^{-\int_{\tau_{0}^{x}}^{\tau_{1}^{j}}\kappa_1 X_{1}(s)ds}V+ \int_{\tau_{0}^{j}}^{\tau_{1}^{x}}\kappa_2X_{2}(u)e^{-\int_{u}^{\tau_{1}^{x}}\kappa_1X_{1}(s)ds}\,du, \non\\
q_x&=& \alpha_1(x)+\alpha_2(x)+\alpha_3(x)+\alpha_4(x).\non
\end{eqnarray}
 Using the iterative method of  Section \ref{sample} with  rate constants $(\kappa_1,\kappa_2,a_1,a_2,a_3,a_4)=(3,4,1,2,8,8)$, the realisations of $\widehat\pi(z|x_{1},x_{2})$ are found in Figure \ref{fig:one} for two different values of $x=(x_{1},x_{2})$ (with $K$ being the iteration number as in Proposition \ref{prop9}). The mixture of the Poisson distribution   changes with $(x_1,x_2)$.
 
\begin{figure}[h]
\centering
\includegraphics[width=14cm, height=7cm]{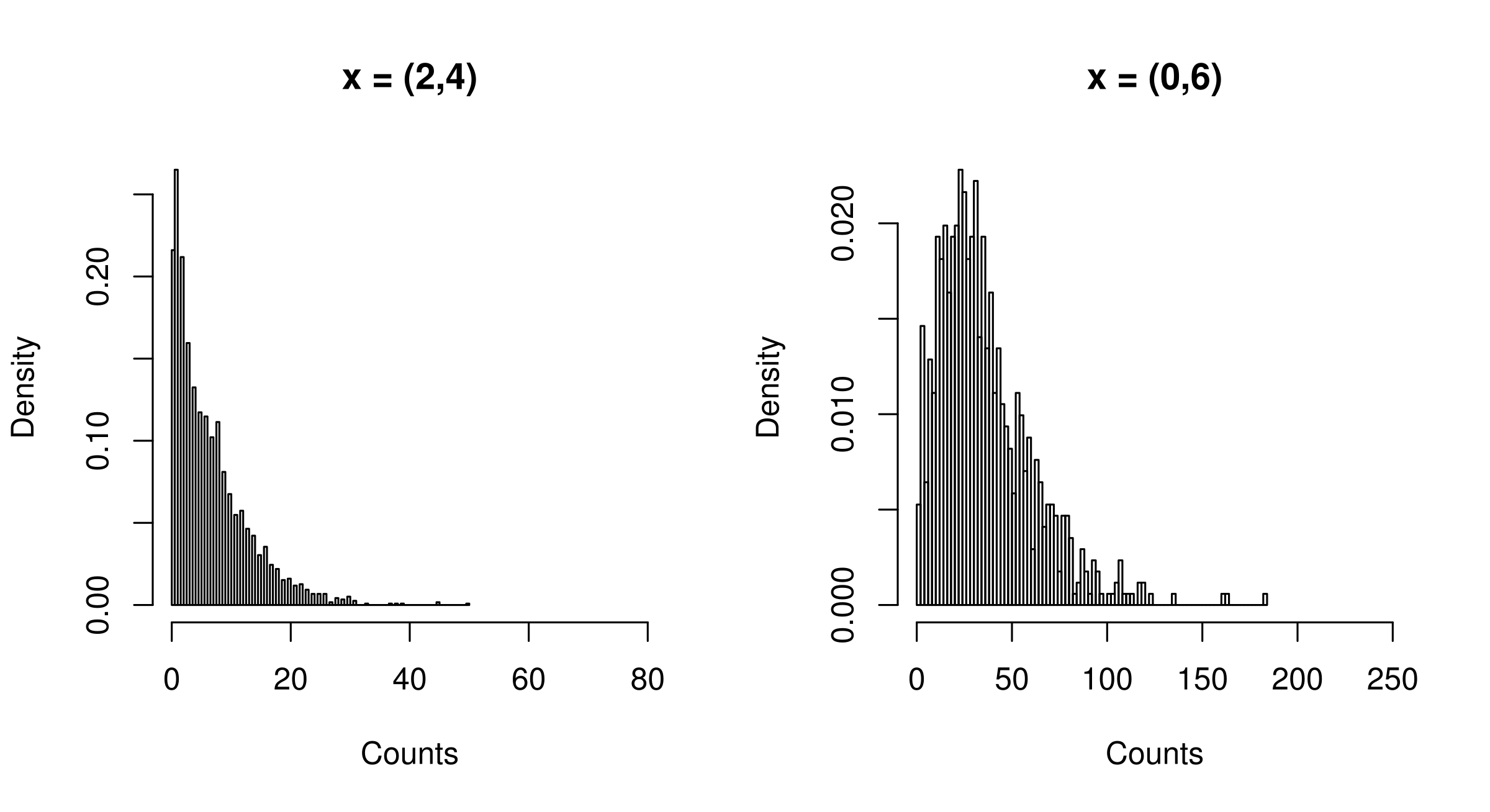}
\caption{Simulated densities with $K=20$ for two different states of the environment.  
}\label{fig:one}
\end{figure}
\end{Example}

\section{Gene Regulatory Networks}\label{regulatory}

An important class of reaction networks is gene regulatory networks \cite{peccoud1995markovian, kepler2001stochasticity, hornos2005self} that are building blocks of   large biological networks and underlie many cellular processes \cite{paulsson2004summing}. Gene regulation is  the process by which a gene is regulated by other molecules, generally known as transcription factors (TFs). Regulation controls the  production of the gene product, a protein. An elementary version of a gene regulatory system allows the gene to be in two states, an activated state where protein is produced and an inactivated state where protein is not produced (or produced at a lower rate).  The activation and inactivation is controlled by TFs with potential feedback mechanisms. Here we consider two   models of a gene regulatory mechanism \cite{peccoud1995markovian,jkedrak2016influence} and show that the stationary distributions exists and can be exactly characterised.   

The first is the following  reaction network with stochastic mass-action,
$$G\ce{<=>[\alpha_1][\alpha_2]} G',  \qquad G'\ce{->[\lambda'_1]} P+G',\qquad P\ce{->[\lambda_2]} \emptyset,$$
$$\alpha_1(x)=a_1x_1,\quad \alpha_2(x)=a_2x_2,\quad \lambda'_1(x,z)=\kappa'_1x_2,\quad\lambda_2(x,z)=\kappa_2z,$$
where  the dynamics of $G,G'$ constitutes the environment,  $x=(x_1,x_2)$ denotes the counts of $G,G'$ and $z$ that of $P$ \cite{peccoud1995markovian}. Here, $G$ denotes the inactive gene and $G'$ the activated gene. While in state $G'$ the gene  produces protein $P$. Hence, we have a stochastic reaction network with a stochastic environment described as
$$0\ce{->[\lambda'_1]} P,\qquad P\ce{->[\lambda_2]} \emptyset.$$

Denote the evolution of the counts of $G,G'$ by $((X_{1}(t),X_{2}(t))\colon t \ge 0) $ and observe that the total count  is conserved  $X_1(t)+X_2(t)=N$ for all $t\ge 0$. In \cite{peccoud1995markovian}, the authors consider $N=1$ and found the stationary distribution  using Kummer's differential equation after a suitable change of variable. Biologically,  $N=1$ would be the case for haploid (or monoploid) cells with one chromosome, such as bacteria. However, many organisms are diploid ($N=2$, like humans) or polyploid ($N\ge 2$, like many plants). Also, in particular in synthetic biology or experimental system biology, one might consider the situation in which the gene is located on a plasmid, a small chromosomal-like structure, with $N\ge 1$. If $N>1$, then the framework of \cite{peccoud1995markovian} will not work to characterise the stationary distribution.

To characterise the stationary distribution for general $N$ we rely on Theorem \ref{T1}. The stationary distribution of the counts of $G,G'$ is  a binomial distribution \cite{anderson2010product},
$$\pi_N(x)=\binom{N}{x_1}\!\left(\frac{a_2}{a_1+a_2}\right)^{\!\!x_1}\left(\frac{a_1}{a_1+a_2}\right)^{\!\!x_2}, \quad x\in \Gamma_N=\{(x_1,x_2)\in\Z^2_{\ge 0}\colon x_1+x_2=N\},$$
The function $G_x(s)$ is
$$G_x(s)= \sum_{n=0}^\infty\frac{1}{(n+1)!}(-\kappa_2)^n\kappa'_1(N-x_1)s^{n+1}=\frac{\kappa'_1(N-x_1)}{\kappa_2}(1-e^{-\kappa_2 s})\quad\text{for}\quad s\ge0.$$
Let $U_{x}$ be the exponential holding time in $x$ with rate $q_x=a_1x_1+a_2(N-x_1)$. 
Then the stationary distribution $\widehat\pi_N(x,z)$ is 
\begin{eqnarray}
\widehat\pi_N(x,z) &= \pi_N(x)\int_{\R_{\ge 0}}P(\textup{Pois}(y)= z) \mu_x(dy), \quad(x,z)\in \Gamma\times\Z_{\geq0}, \label{ExampleGRNs}
\end{eqnarray}
where $\mu_x$ is characterised in Theorem \ref{T1} as the law of $V^{*}= e^{-\kappa_2 U_x} V +G_{x}(U_{x}),$ 
and $V$ solves an explicit SRE.

The second example is a generalisation of the first and models a more complex mechanism adding a TF as a separate species \cite{jkedrak2016influence}.
 Specifically, the system is
$$G\ce{<=>[\alpha_1][\alpha_2]} G',  \qquad G\ce{->[\lambda_1]} P+G  \qquad G'\ce{->[\lambda'_1]} P+G',\qquad P\ce{->[\lambda_2]} \emptyset,$$
$$P\ce{->[\lambda_3]} P+TF, \qquad  TF\ce{->[\lambda_4]}\emptyset,$$
$$\alpha_1(x)=a_1x_1,\quad \alpha_2(x)=a_2x_2,\quad \lambda_1(x,y)=\kappa_1x_2,\quad \lambda'_1(x,y)=\kappa'_1x_2,$$
$$\lambda_2(x,y)=\kappa_2y,\quad \lambda_3(x,y,z)=\kappa_3y,\quad \lambda_4(x,y,z)=\kappa_4z,$$
 where $x=(x_1,x_2)$ denotes the counts of $G,G'$  $y$ the count of $P$ and $z$  that of $TF$, and the rate functions dependence of $x,y,z$ reflect how we will build up the network.
 
In this example, the gene product  $P$ can be produced from either version of the gene, but at different rates $\lambda_1,\lambda'_1$.  Subsequently, $P$ is involved in the production of a TF. The TF does not affect the activation of the gene nor the production of $P$. A still more complete model would consider a feedback mechanism, where the TF binds to the gene to enhance production of $P$.

As before we might at first take the dynamics of $G,G'$ to constitute the stochastic environment and 
$$0\ce{->[\lambda_1+\lambda'_1]} P,\qquad P\ce{->[\lambda_3]} \emptyset$$ 
to be a stochastic reaction network with stochastic environment.  Using Theorem \ref{T1} we  characterise  the stationary distribution, but now $\mu_x$ in \eqref{ExampleGRNs} is the law of
$$ e^{-\kappa_2 U_x} V_1 +\frac{\kappa'_1N+x_1(\kappa_1-\kappa'_1)}{\kappa_2}(1-e^{-\kappa_2 U_x} ), $$ 
where the law of $V_1$ is the unique solution of an explicit SRE, and $U_x\sim\textup{Exp}(\alpha_1(x)+\alpha_2(x))$.

At a second level, to understand the evolution of the TF, we might take the joint dynamics of $G,G',P$  represented by $((X_{1}(t),X_{2}(t),Y(t))\colon t\ge 0)$ to constitute the stochastic environment, where $Y(t)$ is the count of $P$, and
$$0\ce{->[\lambda_3]} TF,\qquad TF\ce{->[\lambda_4]} \emptyset$$
to be a stochastic reaction network with stochastic environment.   Using \eqref{invariant} the conditional probability is
$$\widehat\pi(z|x,y) = \int_{\R_{\ge 0}}P(\textup{Pois}(u)=z)\mu_{(x,y)}(du).$$ 
where $\mu_{(x,y)}$ is the law of 
$$e^{-\kappa_4 U_{(x,y)}}V_{2} +\frac{y\kappa_3}{\kappa_4}\left(1-e^{-\kappa_4 U_{(x,y)}}\right),$$
$U_{(x,y)}$ is an exponential variable with rate $q_x=\alpha_1(x)+\alpha_2(x)+\lambda_1(x,y)+\lambda'_1(x,y)+\lambda_2(x,y)$, and  $V_2$ solves an explicit SRE.

\section{Discussion}

A class of stochastic reaction networks with stochastic environment is studied and their finite time distribution as well as their closed form stationary distribution is characterised (when it exists). In applications, the stochastic environment itself is a stochastic reaction network. We focused on mono-molecular reaction networks, networks where reaction rates depend linearly on the counts of the species. This had the advantage that the finite time distribution of the molecular counts can be characterised through the paths of the stochastic environment \cite{jahnke2007solving}. The long term dynamics can then be characterised using the finite time characterisation. To go beyond the mono-molecular cases, finite time distributions are generally not known, perhaps with  \cite{laurenzi2000analytical} as the exception where the reversible reaction $A+B\ce{<=>} C$ is studied. However the method of \cite{laurenzi2000analytical} cannot be extended to the present case.

The linearity assumption plays an important role  in controlling non-explosivity of the molecular counts.  If  reactions are not mono-molecular, then it appears  we have less control of the behaviour of the system. In fact, we relied on the characterisation in \cite{jahnke2007solving} which in term builds on a close relationship between the stochastic system and a corresponding deterministic (ODE) system. If the reaction network is not mono-molecular, then this relationship between stochastic and deterministic counterparts is generally lost. See also Example \ref{ex:transient} and \ref{ex:explosion}.

 We end by listing some open directions for future research.

Consider feedback imposed by the protein in the gene activation step, assuming stochastic mass-action kinetics, 
$$G+P\ce{->[\alpha_1]} G'+P, \qquad G' \ce{->[\alpha_2]} G, \qquad G' \ce{->[\lambda'_1]}P+G', \qquad P\ce{->[\lambda_2]} \emptyset,$$
with  $\alpha_1(x)=a_1x_1$.
Clearly, we cannot take $G,G'$ to constitute the environment as the transformation of $G$ to $G'$ is catalysed by $P$. The evolution of the counts of $P$ can be stochastically upper-bounded by the reaction network
 $$\emptyset  \ce{->[\widetilde\alpha_1]} P,\qquad P\ce{->[\alpha]}\emptyset,$$
 where $\widetilde\alpha_1(x)=Na_1$ and $N$ is the conserved amount of $G,G'$. Hence the feedback network is  ergodic. More involved frameworks are needed to  characterise the stationary distribution in such cases.

A consequence of the Markov assumption \eqref{parasite} is reflected in following identity 
\begin{eqnarray*}
P(X(t)=x\big| (X(s),Z(s))=(x',z'))= P(X(t)=x\big| X(s)=x'),\quad 0<s<t,
\end{eqnarray*}
 that significantly simplified the structure of the stationary distribution here. In \cite{bowsher2010stochastic},   a reduction method for arbitrary reaction networks is considered based on a locally independence structure derived from a graph (the so-called Kinetic Independence Graph). The locally independence structure  is a weaker assumption than   assumption \eqref{parasite}. It would be interesting to characterise stochastic stability of reaction networks having independence structures weaker than \eqref{parasite}.

Generating a sample from $\mu_x$ will be extremely cumbersome if the conditioning event $\{X=x\}$ is rare as any  excursion path $(\tau_{0}^{x},\tau_{1}^{x}]$ will be large. In such cases, other simulation techniques are required. Here importance sampling might be applicable. It would be of interest to develop techniques that improves sampling for the models considered here.

\section{Acknowledgement}
CW is supported by the Independent Research Fund Denmark. DC has received funding from the European Research Council (ERC) under the European Union's Horizon 2020 research and innovation programme grant agreement No. 743269 (CyberGenetics project).

\section{Appendix}

\subsection{Proof of Lemma \ref{lem:R} }

Since $\tau^x_0=0$, then $C^x_{-1}=0$ and $D^x_{-1}=0$. By \eqref{eq:rel_C_Phi} and \eqref{eq:rel_D_W}, 
\begin{align*}
 \big(\Phi(\tau^x(t)),W(\tau^x(t))\big)&=\left(\prod_{k=0}^{n_t^x-1}C^x_k\,,\,\sum_{k=0}^{n_t^x-1} \left(\prod_{i=k+1}^{n_t^x-1} C^x_i\right)D^x_k\right)\\
 &=\left(C^x_{n_t^x-1}C^x_{n_t^x-2}\cdots C^x_0\,,\, D^x_{n_t^x-1}+C^x_{n_t^x-1}D^x_{n_t^x-2}+\dots+\left(\prod_{i=0}^{n_t^x-1} C^x_i\right)D^x_0\right).
\end{align*}
It follows from the renewal property of $\prX$ that 
$\big((C^x_0,D^x_0), (C^x_1,D^x_1), \dots, (C^x_{n_t^x-1},D^x_{n_t^x-1})\big)$
is distributed as
$\big((C^x_{n_t^x-1},D^x_{n_t^x-1}), (C^x_{n_t^x-2},D^x_{n_t^x-2}), \dots, (C^x_0,D^x_0)\big)$ for all $t\ge 0$. Hence,
\begin{equation}\label{eq:cond_equality}
 \big(\Phi(\tau^x(t)),W(\tau^x(t))\big)\,\sim\,\big(\widehat{\Phi}^x(t),\widehat{W}^x(t)\big),
 \end{equation}
 where
 \begin{align*}
 \widehat{\Phi}^x(t)&=\prod_{k=n_t^x-1}^{0}C^x_{k}=C^x_0C^x_1\cdots C^x_{n_t^x-1},\\
 \widehat{W}^x(t)&=\sum_{k=0}^{n_t^x-1} \left(\prod_{i=k-1}^{0} C^x_i\right) D^x_k
=D^x_0+C^x_0D^x_1+C^x_0C^x_1D^x_2+\dots+\left(\prod_{i=n_t^x-2}^{0} C^x_i\right) D^x_{n_t^x-1}
\end{align*}
are functions of $\prXuptot$. 
By the strong Markov property, \eqref{eq:cond_equality} also holds conditional on the event  $\{X(t)=x\}$.

We  will show that $(\widehat{\Phi}^x(t),\widehat{W}^x(t))$ converges strongly to a random variable $(\widehat\Phi^x_\infty, \widehat W^x_\infty)$ as $t\to\infty$, by proving it separately for the two entries, thus proving \eqref{eq:def_Phi_W_infty} and \eqref{eq:goal}.

 By assumption, $\widehat\Phi(t)$ is a block matrix with one block $\widehat\Phi^x_i(t)$, $1\leq i\leq h$, for each strongly connected component $\cls_i$ of the partition of $\cls$, and one block for $\cls_P$. Let $\alpha$ be as in Lemma~\ref{lem:norm_1_implications}\ref{item:irreducibility}.  Then with positive probability the matrices $\widehat\Phi^x_i(\tau^x_{\alpha})$, $1\leq i\leq h$, have all  entries positive. Hence, by Lemma~\ref{lem:interpretation} and \cite[Theorem 1.4]{infinite_matrix_product}, {for a fixed $1\leq i\leq h$} the limit (note the transpose form compared to $\widehat\Phi^x_i(t)$)
\begin{equation*}
M_\infty^x=\lim_{n\to\infty}M_{n,\alpha}^x=\lim_{n\to\infty}
\prod_{j=0}^n (C^x_{i,j \alpha+\alpha-1})^\top (C^x_{i,j \alpha+\alpha-2})^\top \dots (C^x_{i,j\alpha})^\top 
 \end{equation*}
exists a.s, where $C_{i,j}^x=\Phi_i(\tau^x_j,\tau^x_{j+1})$. Likewise, the limit  
$$M^x_\infty(\ell)=
\left(\prod_{j=0}^\infty (C^x_{i,j \alpha+\alpha-1+\ell})^\top (C^x_{i,j \alpha+\alpha-2+\ell})^\top \dots (C^x_{i,j \alpha+\ell})^\top \right)(C^x_{i,\ell-1})^\top(C^x_{i,\ell-2})^\top\dots (C^x_{i,0})^\top$$
exists a.s.\! for $1\leq \ell\leq\alpha-1$.
In order to prove that $\widehat\Phi^x_i(t)$ converges strongly, we only need to show that $M^x_\infty(\ell)=M^x_\infty$ a.s. for all $1\leq \ell\leq\alpha-1$. If it was not so, $\lim_{n\to\infty} M_{n,\alpha+1}$ would not exist. However, it converges by the same arguments as before.  Hence, $\widehat{\Phi}_i^x(t)$ converges strongly, and so does $\widehat{\Phi}^x(t)$.  It also follows from \cite[Theorem 1.4]{infinite_matrix_product} that in the limit the columns are identical. 

We now study the convergence of $\widehat{W}^x(t)$ and show the limit has finite  expectation. By Lemma~\ref{lem:interpretation}\ref{item:probability}, all entries of the matrices $C^x_n$ are non-negative. The same holds for the entries of the matrices $D^x_n$. Finally, for $0\leq t_1\leq t_2$ we have $n_{t_1}^x\leq n_{t_2}^x$, therefore
\begin{equation*}
 \widehat{W}^x(t_1)\leq \widehat{W}^x(t_2)\quad\text{for all}\quad 0\le t_1<t_2,
\end{equation*}
where the inequality holds component-wise. Hence, $\lim_{t\to\infty}\widehat W^x(t)$ exists a.s. and
\begin{equation}\label{egheghelrjgelrkjh}
 \lim_{t\to\infty}\widehat W^x(t)=W^x_\infty.
\end{equation}
Equation \eqref{eq:goal} is then proved.  In order to prove $E(W^x_\infty)<\infty$, define 
$J\in\Z^{d\times d}_{\geq0}$ as the diagonal matrix with entries
$$J_{ij}=\begin{cases}
          1 & \text{if }i=j\text{ and }S_i\text{ is properly degraded}, \\
          0 & \text{otherwise}.
         \end{cases}$$
Since $(B_X)_i=0$ for any $1\le i\le d$ such that $0\ntransf S_i$ then the corresponding columns of $D^x_k$, $k\ge 0$, are zero. Consequently, we might write  $C^x_k J$ in the definition of $W^x_\infty$ rather than $C^x_k$, as the zero columns progresses through the product,
$$W^x_\infty=\sum_{k=0}^{\infty} \left(\prod_{i=0}^{k-1} C^x_iJ\right)  D^x_k.$$
By monotone convergence and the strong Markov property,
$$E(\|W^x_\infty\|_1)\leq\sum_{k=0}^{\infty} \left(\prod_{i=0}^{k-1} E(\| C^x_iJ\|_1)\right) E(\|D^x_k\|_1)=\sum_{k=0}^{\infty} \left(E(\| C^x_0J\|_1)\right)^k E(\| D^x_0\|_1).$$
Finally, due to Lemma~\ref{lem:norm_1_implications}, we have
$E(\| C^x_0J\|_1)<1\quad\text{and}\quad E(\| D^x_0\|_1)<\infty,$
 implying
$$E(\|W^x_\infty\|_1)\leq\frac{E(\|D^x_0\|_1)}{1-E(\| C^x_0J\|_1)}<\infty.$$

 In order to prove  \eqref{eq:conditional_limit}, note that for $\eta>0$ and any function $\sigma\colon\R_{\geq0}\to \R_{\geq0}$ with $\lim_{t\to\infty} \sigma(t)=\infty$, 
\begin{equation}\label{eqeqeqeqeqeqeq}
 \lim_{t\to\infty}P\Big(\Big\|\Big(\widehat\Phi^x(\sigma(t)),\widehat W^x(\sigma(t))\Big)-\Big(\Phi^x_\infty,W_\infty^x\Big)\Big\|_1>\eta \,\Big\vert\, X(t)=x\Big)=0.
\end{equation}
Indeed, by ergodicity of $\prX$ there exists $T_\eta$ such that
$$P(X(t)=x)>\pi(x)-\eta\quad\text{for all }t\geq T_\eta,$$
and if $\eta$ is smaller than $\pi(x)$
\begin{align*}
 0&<(\pi(x)-\eta)P\Big(\Big\|\Big(\widehat\Phi^x(\sigma(t)),\widehat W^x(\sigma(t))\Big)-\Big(\Phi^x_\infty,W_\infty^x\Big)\Big\|_1>\eta \Big\vert X(t)=x\Big)\\
 &<P\Big(\Big\|\Big(\widehat\Phi^x(\sigma(t)),\widehat W^x(\sigma(t))\Big)-\Big(\Phi^x_\infty,W_\infty^x\Big)\Big\|_1>\eta , X(t)=x\Big)\\
 &\leq P\Big(\Big\|\Big(\widehat\Phi^x(\sigma(t)),\widehat W^x(\sigma(t))\Big)-\Big(\Phi^x_\infty,W_\infty^x\Big)\Big\|_1>\eta\Big).
\end{align*}
Equation \eqref{eqeqeqeqeqeqeq} then follows from the strong convergence of $(\widehat\Phi^x(\sigma(t)), \widehat W^x(\sigma(t)))$. Therefore, for any continuity set $A$ of $(\Phi^x_\infty,W^x_\infty)$ we have
\begin{equation}\label{eq:sdghskdh}
\begin{split}
 \lim_{t\to \infty}P\Big(\Big(\Phi(\tau^x(t)), W(\tau^x(t))\Big)\in A\,\Big\vert\,X(t)=x\Big)&=
 \lim_{t\to \infty}P\Big(\Big(\widehat\Phi(t), \widehat W^x(t)\Big)\in A\,\Big\vert\,X(t)=x\Big)\\
 &=\lim_{t\to \infty}P\Big(\Big(\Phi^x_\infty,W_\infty^x\Big)\in A\,\Big\vert\,X(t)=x\Big)\\
 &=\lim_{t\to \infty}P\Big(\Big(\widehat\Phi(\sigma(t)), \widehat W^x(\sigma(t))\Big)\in A\,\Big\vert\,X(t)=x\Big),
 \end{split}
\end{equation}
where the first equality follows from \eqref{eq:cond_equality} and \eqref{egheghelrjgelrkjh}, the second equality follows from \eqref{eqeqeqeqeqeqeq} with the function $\sigma$ being the identity, and the third equality follows from \eqref{eqeqeqeqeqeqeq} for a general function $\sigma$ diverging to $\infty$. Note that \eqref{eq:conditional_limit} is proven if in the last line of \eqref{eqeqeqeqeqeqeq} the conditioning part is removed. The idea is to show that the conditioning part do not play any role in the limit by using the arbitrariness of the function $\sigma$ and the ergodicity of $\prX$.

Fix $0<\varepsilon<\pi(x)$, and let $\Gamma_\varepsilon\subseteq\Gamma$ be a finite set of states such that
\begin{equation}\label{eq:gamma_epsilon}
 \sum_{x'\notin\Gamma_\varepsilon}\pi(x')<\frac{\varepsilon}{2},
\end{equation}
where we use the convention that a sum over an empty set is zero. For any real numbers $0\leq s\leq t$ define
$$\theta_x(s,t)=\sup_{x'\in\Gamma_\varepsilon} \Big|P(X(t)=x\,|\,X(s)=x')-P(X(t)=x)\Big|.$$
Since $\Gamma_\varepsilon$ is a finite set, and by ergodicity of the CTMC $\prX$ with the fixed initial condition $X(0)=x$, we have that for all $s\geq0$
\begin{equation}\label{eq:key_equation}
 \lim_{t\to \infty} \theta_x(s,t)=0.
\end{equation}
A first consequence of \eqref{eq:key_equation} is that there exists $T_\varepsilon$ such that {$\theta_x(0,t)<\varepsilon$ for all $t\geq T_\varepsilon$. Hence,} we can define the function $\sigma\colon\R_{\geq0}\to\R_{\geq0}$ as
$$\sigma(t)=\begin{cases}
        0 & \text{if }t\le T_\varepsilon, \\
    {   \sup\{0\le s\le t\colon\, \theta_x(s,t)<\varepsilon\} } & \text{if }t> T_\varepsilon.
       \end{cases}$$
It also follows from \eqref{eq:key_equation} that $\sigma(t)$ goes to infinity as $t$ goes to infinity. 
Hence, it follows from the ergodicity of $\prX$ that by potentially increasing $T_\varepsilon$ we might further assume 
\begin{align}
\label{eq:st_infinity}
 \sum_{x'\in\Gamma}\Big|P(X(\sigma(t))=x')-\pi(x')\Big|&<\frac{\varepsilon}{2}\quad\text{for all}\quad t>T_\varepsilon,\\
 \label{eq:t_infinity}
 \sum_{x'\in\Gamma}\Big|P(X(t)=x')-\pi(x')\Big|&<\varepsilon\quad\text{for all}\quad t>T_\varepsilon.
\end{align}
In particular, it follows from \eqref{eq:gamma_epsilon} and \eqref{eq:st_infinity} that
\begin{equation}\label{eq:bound}
{0<\sum_{x'\notin\Gamma_\varepsilon}P(X(\sigma(t))=x')<\varepsilon\quad\text{for all}\quad t>T_\varepsilon. }
\end{equation}
For all $t>T_\varepsilon$, we have
\begin{align}\label{eq:lower}
 &P\Big(\Big(\widehat{\Phi}^x(\sigma(t)),\widehat W^x(\sigma(t))\Big)\in A\,\Big\vert\,X(t)=x\Big) \non \\
 &\quad\geq\sum_{x'\in\Gamma_\varepsilon}P\Big(\Big(\widehat{\Phi}^x(\sigma(t)),\widehat W^x(\sigma(t))\Big)\in A\,\Big\vert\,X(\sigma(t))=x', X(t)=x\Big)P(X(\sigma(t))=x'\,|\, X(t)=x)  \non \\
 &\quad=\sum_{x'\in\Gamma_\varepsilon}P\Big(\Big(\widehat{\Phi}^x(\sigma(t)),\widehat W^x(\sigma(t))\Big)\in A\,\Big\vert\,X(\sigma(t))=x'\Big)\frac{P(X(t)=x\,|\, X(\sigma(t))=x')P(X(\sigma(t))=x')}{P( X(t)=x)} \non \\
 &\quad\geq\frac{\pi(x)-2\varepsilon}{\pi(x)+\varepsilon}\sum_{x'\in\Gamma_\varepsilon}P\Big(\Big(\widehat{\Phi}^x(\sigma(t)),\widehat W^x(\sigma(t))\Big)\in A\,\Big\vert\,X(\sigma(t))=x'\Big)P(X(\sigma(t))=x') \non \\
 &\quad\geq\frac{\pi(x)-2\varepsilon}{\pi(x)+\varepsilon}\Big(P\Big(\Big(\widehat{\Phi}^x(\sigma(t)),\widehat W^x(\sigma(t))\Big)\in A\Big)-\varepsilon\Big),
\end{align}
where in the second line we conditioned on the possible values of $X(\sigma(t))$, in the  third line we utilise {the fact that $\sigma(t)\leq t$ by definition,} the Markov property of $\prX$, and Bayes' formula. In the forth line we use the definition of $\sigma(t)$ and \eqref{eq:t_infinity}, and in the last line we use \eqref{eq:bound}. Similarly, we have
\begin{equation}\label{eq:upper}
\begin{split}
 &P\Big(\Big(\widehat{\Phi}^x(\sigma(t)),\widehat W^x(\sigma(t))\Big)\in A\,\Big\vert\,X(t)=x\Big)\\
 &\quad\leq\sum_{x'\in\Gamma_\varepsilon}P\Big(\Big(\widehat{\Phi}^x(\sigma(t)),\widehat W^x(\sigma(t))\Big)\in A\,\Big\vert\,X(\sigma(t))=x', X(t)=x\Big)P(X(\sigma(t))=x'\,|\, X(t)=x)\\
 &\quad\quad+\sum_{x'\notin\Gamma_\varepsilon}P(X(\sigma(t))=x'\,|\, X(t)=x)\\
 &\quad=\sum_{x'\in\Gamma_\varepsilon}P\Big(\Big(\widehat{\Phi}^x(\sigma(t)),\widehat W^x(\sigma(t))\Big)\in A\,\Big\vert\,X(\sigma(t))=x'\Big)\frac{P(X(t)=x\,|\, X(\sigma(t))=x')P(X(\sigma(t))=x')}{P( X(t)=x)}\\
  &\quad\quad+\sum_{x'\notin\Gamma_\varepsilon}\frac{P(X(t)=x\,|\, X(\sigma(t))=x')P(X(\sigma(t))=x')}{P( X(t)=x)}\\
 &\quad\leq\frac{\pi(x)+2\varepsilon}{\pi(x)-\varepsilon}\left(\varepsilon+\sum_{x'\in\Gamma_\varepsilon}P\Big(\Big(\widehat{\Phi}^x(\sigma(t)),\widehat W^x(\sigma(t))\Big)\in A\,\Big\vert\,X(\sigma(t))=x'\Big)P(X(\sigma(t))=x')\right)\\
 &\quad\leq\frac{\pi(x)+2\varepsilon}{\pi(x)-\varepsilon}\Big(P\Big(\Big(\widehat{\Phi}^x(\sigma(t)),\widehat W^x(\sigma(t))\Big)\in A\Big)+\varepsilon\Big),
\end{split}
\end{equation}
where for the first inequality we condition on the possible values of $X(\sigma(t))$, for the equality afterwards we utilise {the fact that $\sigma(t)\leq t$, }the Markov property of $\prX$ and Bayes' formula. For the consecutive inequality we use the definition of $\sigma(t)$, \eqref{eq:t_infinity} and \eqref{eq:bound}, and in the last line we use the law of total probability. In conclusion, it follows from \eqref{eq:lower} and \eqref{eq:upper} {and by the arbitrariness of $0<\varepsilon<\pi(x)$} that
$$\lim_{t\to\infty} P\Big(\Big(\widehat{\Phi}^x(\sigma(t)),\widehat W^x(\sigma(t))\Big)\in A\,\Big\vert\,X(t)=x\Big)=\lim_{t\to\infty} P\Big(\Big(\widehat{\Phi}^x(\sigma(t)),\widehat W^x(\sigma(t))\Big)\in A\Big)$$
which in turn implies \eqref{eq:conditional_limit} by \eqref{egheghelrjgelrkjh} and \eqref{eq:sdghskdh}. The proof is then completed.
\hfill$\square$
\subsection{Proof of Lemma \ref{lem:uniqueness}}

Recall that $(C,D)$ is distributed as $(C_0^x,D_0^x)$. Choose a version of $W^x_\infty$ such that $(C,D)$ is independent of $(C_i^x,D_i^x)$, $i\in\Z_{\ge 0}$. Then it follows straightforwardly from the definition of {$\Phi^x_\infty$ and} $W^x_\infty$ in \eqref{eq:def_Phi_W_infty} that {$(\Phi^x_\infty,W^x_\infty)$} fulfils the SRE.

Using \cite{erhardsson2014conditions}[Theorem 2.1] the uniqueness of the {solution to $V_2\sim CV_2+D$} can be deduced 
 under the conditions
\begin{equation}
\prod_{i=0}^{n}C _{i}\ce{->[a.s.]} 0, \s\text{and}\s
\left(\prod_{i=0}^{n-1}C_{i}\right)D_n \ce{->[a.s.]} 0 \s\textup{as }\quad n \to \infty,\label{defstochrec2*}
\end{equation}
where $(C_{n},D_{n})_{n\ge 0}$  are i.i.d.~distributed as $(C,D)\sim (C^x_0,D^x_0)$. 
According to \eqref{eq:def_C} and Lemma \ref{lem:norm_1_implications}(a), there is $\alpha\ge 1$ such that $E(\| \widetilde C_i\|_1) <1$, where $\widetilde C_i=C_{\alpha-1+i\alpha}\ldots C_{1+i\alpha}C_{i\alpha}$, $i\ge 0$. Note that for $1\le k\le \alpha$ and $n\ge 0$,
$$M_{k+n\alpha}=\left\|\left(\prod_{i=0}^{k-1+n\alpha}C_{i}\right)\!D_{k+n\alpha}\right\|_1\le
\left(\prod_{i=0}^{n-1} \left\| \widetilde C_i\right\|_1 \right)\left\| D_{k+n\alpha}\right\|_1.$$
 Applying Jensen's inequality,  $E(\log\| \widetilde C_i\|_1) \le \log E(\| \widetilde C_i\|_1) <0$. Using \eqref{eq:def_C} 
  and  Lemma \ref{lem:norm_1_implications}(b), we similarly find $E(\log\|D_n\|_1)\le \log E(\| D_n \|_1)<\infty$.   Hence as a consequence of the strong law of large numbers,
  $$\limsup_{k+n\alpha\to\infty} \frac{1}{k+n\alpha}\log(M_{k+n\alpha})\le 
\limsup_{n\to\infty}\left(\frac{1}{n} \sum_{i=0}^{n-1}\log\| { \widetilde C_i} \|_1\right)+ \limsup_{n\to\infty}\left(\frac{1}{n} \log\| D_n \|_1\right)\,<\,0\quad \text{a.s.},$$
proving the second assertion of \eqref{defstochrec2*} (the second term converges to $0$ if $E(\log\| D_n \|_1)$ is finite and is otherwise $\le0$). The first assertion {of \eqref{defstochrec2*}} follows similarly.

\hfill$\square$

\subsection{Proof of Lemma \ref{cor:ergodic}}

We state a preliminary lemma first.

\begin{Lemma}\label{lem:tight_poisson}
 Consider a stochastic process $\{L(t)\colon t\ge 0\}$ on $\R_{\geq0}$, and let $m$ be a positive real number. Let $\{J(t)\colon t\ge 0\}$ be another stochastic process, such that $J(t)\sim \textup{Pois}(m L(t))$ for any $t\ge 0$. Then, $\{L(t)\colon t\ge 0\}$ is tight if and only if $\{J(t)\colon t\ge 0\}$ is tight.
\end{Lemma}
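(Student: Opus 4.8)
The plan is to reduce everything to two elementary facts about Poisson laws. First, stochastic monotonicity: if $0\le\mu_1\le\mu_2$ then $\textup{Pois}(\mu_1)$ is stochastically dominated by $\textup{Pois}(\mu_2)$ (couple them through a single unit-rate Poisson process on $[0,\mu_2]$). Second, a Chebyshev-type concentration: since $\textup{Pois}(\mu)$ has mean and variance both equal to $\mu$, one has $P(\textup{Pois}(\mu)\ge \mu/2)\ge 1-4/\mu\to1$ as $\mu\to\infty$. I also use that, because the compact subsets of $\R_{\ge0}$ (resp.\ $\Z_{\ge0}$) are exactly the bounded closed sets, a process $\{W(t)\colon t\ge0\}$ on such a space is tight if and only if for every $\varepsilon>0$ there is $K_\varepsilon$ with $\sup_{t\ge0}P(W(t)>K_\varepsilon)\le\varepsilon$. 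Throughout, the hypothesis ``$J(t)\sim\textup{Pois}(mL(t))$'' is read as the conditional statement that, given $L(t)$, the variable $J(t)$ is Poisson distributed with mean $mL(t)$, so that all estimates below follow by integrating the corresponding conditional inequalities against the law of $L(t)$.

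For the implication ``$\{L(t)\}$ tight $\Rightarrow$ $\{J(t)\}$ tight'', fix $\varepsilon>0$ and pick $K$ with $\sup_tP(L(t)>K)\le\varepsilon/2$. On the event $\{L(t)\le K\}$, stochastic monotonicity gives that $J(t)$ is dominated by $\textup{Pois}(mK)$, hence for any $M>0$
\[
P(J(t)>M)\le P(L(t)>K)+P(\textup{Pois}(mK)>M)\le\frac{\varepsilon}{2}+P(\textup{Pois}(mK)>M),
\]
and choosing $M=M_\varepsilon$ so large that $P(\textup{Pois}(mK)>M_\varepsilon)\le\varepsilon/2$ finishes this direction.

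For the converse I argue by contraposition. If $\{L(t)\}$ is not tight, there is $\varepsilon_0>0$ such that for every $n\in\Z_{>0}$ some $t_n\ge0$ satisfies $P(L(t_n)>n)>\varepsilon_0$. By the concentration bound there is $n_0$ with $P(\textup{Pois}(\mu)\ge mn/2)\ge1/2$ whenever $n\ge n_0$ and $\mu\ge mn$; combined with stochastic monotonicity this yields $P(\textup{Pois}(m\ell)\ge mn/2)\ge1/2$ for every $\ell>n$ and every $n\ge n_0$. Conditioning on the value of $L(t_n)$,
\[
P\Big(J(t_n)\ge\frac{mn}{2}\Big)\ge P\Big(J(t_n)\ge\frac{mn}{2},\,L(t_n)>n\Big)\ge\frac12\,P(L(t_n)>n)>\frac{\varepsilon_0}{2}
\]
for all $n\ge n_0$. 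Since $mn/2\to\infty$, no single constant $K$ can satisfy $\sup_tP(J(t)>K)\le\varepsilon_0/2$, so $\{J(t)\}$ is not tight, which completes the proof. I do not expect a genuine obstacle here; the only point requiring a little care is the conditional reading of ``$J(t)\sim\textup{Pois}(mL(t))$'' mentioned above, which is what makes the two conditioning steps legitimate for an arbitrary $\R_{\ge0}$-valued process $\{L(t)\}$.
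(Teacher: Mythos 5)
Your proof is correct and follows essentially the same route as the paper's: the forward direction splits on the event $\{L(t)\le K\}$ and uses stochastic domination by a fixed Poisson law, and the converse argues by contraposition using Chebyshev's concentration of $\textup{Pois}(\mu)$ around its mean. The only differences are cosmetic (you cut the Poisson tail at $\mu/2$ where the paper uses $mM-2\sqrt{mM}$, and you invoke tightness of a single Poisson law where the paper applies Markov's inequality with an explicit threshold).
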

\begin{proof}
 Assume that $\{L(t)\colon t\ge 0\}$ is tight. Then, by definition, for all $\varepsilon>0$ there exists $M_\varepsilon$ such that
 $$\sup_{t\ge 0}P(L(t)> M_\varepsilon)\leq \varepsilon.$$
For  $0<\varepsilon\leq 1$  define
 $$\widetilde{M}_\varepsilon=\frac{2m}{\varepsilon}\left(1-\frac{\varepsilon}{2}\right)M_{\frac{\varepsilon}{2}}.$$
We have
 \begin{align*}
  \sup_{t\ge 0}P\Big(J(t)> \widetilde{M}_\varepsilon\Big)&\leq \left(1-\frac{\varepsilon}{2}\right)P\Big(\textup{Pois}(m M_{\frac{\varepsilon}{2}})> \widetilde{M}_\varepsilon\Big)+\frac{\varepsilon}{2}\\
  &\leq \left(1-\frac{\varepsilon}{2}\right)\frac{m M_{\frac{\varepsilon}{2}}}{\widetilde{M}_\varepsilon}+\frac{\varepsilon}{2}=\varepsilon,
 \end{align*}
 where   the second inequality follows from Markov's inequality. Hence, the process $\{J(t)\colon t\ge 0\}$  is tight.
 
 For the other direction, assume that $\{L(t)\colon t\ge 0\}$  is not tight. Then, there exists $\varepsilon>0$ such that for all $M>0$ there exists $t\ge 0$ with $P(L(t)> M)> \varepsilon$. Then, for all $M> \frac{4}{m}$ there exists $t\ge 0$ with
 \begin{align*}
  P\Big(J(t)> mM-2\sqrt{mM}\Big)&> \varepsilon P\Big(\textup{Pois}(mM)> mM-2\sqrt{mM}\Big)\\
  &> \varepsilon P\Big(|\textup{Pois}(mM)-mM|\leq 2\sqrt{mM}\Big)\geq \frac{\varepsilon}{4},
 \end{align*}
 where we used Chebychev's inequality in the last step. Therefore, the process  $\{J(t)\colon t\ge 0\}$ is neither tight, and the proof is concluded.
\end{proof}

\begin{Lemma}\label{lem:tight_iff_tight}
 The process $\prW$ is tight if and only if the process $\prZ$ is tight.
\end{Lemma}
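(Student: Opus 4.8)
The plan is to connect $\prW$ and $\prZ$ via the explicit conditional law of $Z(t)$ from Proposition~\ref{P2} and then invoke Lemma~\ref{lem:tight_poisson}; Assumption~\ref{as0} is in force. If $m_j=0$ for all $j$ then $\widetilde\lambda_{0j}\equiv0$, so $B_X\equiv0$ and $W\equiv0$, while $\Theta_{m_j,d}=\emptyset$ forces $\|Z(t)\|_1\le\|Z(0)\|_1$; both processes are then trivially tight, so assume $m^*:=\max_{1\le j\le d}m_j\ge1$. By Proposition~\ref{P2} we may realise $Z(t)=P(t)+Q(t)$ with, conditionally on $\Ft$, $P(t)$ the convolution of the variables $\textup{Multi}(Z_i(0),\Phi(t)e_i)$, $1\le i\le d$, and $Q(t)=\sum_{j=1}^{d}\sum_{\nu\in\Theta_{m_j,d}}\nu\,N_{\nu j}(t)$, the $N_{\nu j}(t)$ being conditionally independent. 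Since $\|P(t)\|_1\le\|Z(0)\|_1$ deterministically and, in the discrete space $\Z_{\ge0}^d$, a set is precompact iff it is $\|\cdot\|_1$-bounded, the process $\prZ$ is tight iff $\{\|Q(t)\|_1\colon t\ge0\}$ is tight.

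Next I would sandwich $\|Q(t)\|_1$ by a conditionally Poisson process. Put $N(t)=\sum_{j=1}^{d}\sum_{\nu\in\Theta_{m_j,d}}N_{\nu j}(t)$. By conditional independence of the $N_{\nu j}(t)$ given $\Ft$, $N(t)$ is conditionally $\textup{Pois}(\Lambda(t))$, where $\Lambda(t)=\sum_{j=1}^{d}\int_0^t\widetilde\lambda_{0j}(X(u))\big(\sum_{\nu\in\Theta_{m_j,d}}g^X_{u,t}(\nu,m_j)\big)\,du$ is $\Ft$-measurable; hence $N(t)\mid\Lambda(t)\sim\textup{Pois}(\Lambda(t))$. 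Because every $\nu\in\Theta_{m_j,d}$ satisfies $1\le\|\nu\|_1\le m_j\le m^*$, one has $N(t)\le\|Q(t)\|_1\le m^*N(t)$ pathwise, so, $m^*$ being a finite constant, $\{\|Q(t)\|_1\}$ and $\{N(t)\}$ are tight together; and by Lemma~\ref{lem:tight_poisson} with $L(t)=\Lambda(t)$ and $m=1$, $\{N(t)\}$ is tight iff $\{\Lambda(t)\}$ is tight.

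It remains to compare $\Lambda(t)$ with $\|W(t)\|_1$. Set $p_j(u,t)=\|\Phi(u,t)e_j\|_1=e^\top\Phi(u,t)e_j\in[0,1]$; the entries of $\Phi(u,t)$ are nonnegative and $\|\Phi(u,t)e_j\|_1\le1$ by Lemma~\ref{lem:interpretation}. The multinomial probabilities of Section~\ref{Backg} give $\textup{Multi}(m_j,\Phi(u,t)e_j)=0$ with probability $(1-p_j(u,t))^{m_j}$, and since $\Theta_{m_j,d}$ excludes the zero configuration, $\sum_{\nu\in\Theta_{m_j,d}}g^X_{u,t}(\nu,m_j)=1-(1-p_j(u,t))^{m_j}$. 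The elementary inequalities $p\le1-(1-p)^{m}\le mp$, valid for $p\in[0,1]$ and integers $m\ge1$ (and both sides vanishing when $m=0$, in which case $\widetilde\lambda_{0j}\equiv0$), then give, pathwise,
$$\|W(t)\|_1=\sum_{j=1}^{d}\int_0^t\widetilde\lambda_{0j}(X(u))\,p_j(u,t)\,du\ \le\ \Lambda(t)\ \le\ m^*\|W(t)\|_1,$$
where the displayed identity uses \eqref{eq:convergence}, $B_X(u)=\sum_{j}\widetilde\lambda_{0j}(X(u))e_j$, and nonnegativity of all quantities. Thus $\{\Lambda(t)\}$ is tight iff $\{\|W(t)\|_1\}$ is tight, which in turn is equivalent to tightness of $\prW$ since $W$ takes values in $\R_{\ge0}^d$, where the compact sets are exactly the bounded closed ones and the $\|\cdot\|_1$-balls are such. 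Chaining the equivalences gives that $\prZ$ is tight iff $\prW$ is tight.

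I expect the crux to be the two-sided estimate for $\Lambda(t)$ in the last step --- identifying $\sum_{\nu}g^X_{u,t}(\nu,m_j)$ with $1-(1-p_j(u,t))^{m_j}$ and squeezing it against $p_j(u,t)$ --- together with the pathwise sandwich $N(t)\le\|Q(t)\|_1\le m^*N(t)$; these are precisely the points where a production reaction may create $m_j>1$ molecules at once. When $m_j\in\{0,1\}$ for every $j$, Proposition~\ref{P2} already identifies $Q(t)$ as conditionally $\textup{Pois}(W(t))$, and the statement follows at once from Lemma~\ref{lem:tight_poisson}.
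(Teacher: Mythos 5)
Your proof is correct, and while your forward implication ($\prW$ tight $\Rightarrow$ $\prZ$ tight) runs along essentially the same lines as the paper's --- bounding the intensity of the aggregated Poisson count by $m^*\|W(t)\|_1$ via $1-(1-p)^{m}\le mp$ and invoking Lemma~\ref{lem:tight_poisson} --- your converse is genuinely different. The paper proves that direction by comparison with an auxiliary system in which each $m_j$ is replaced by $m'_j=\min(1,m_j)$: a coupling argument (independent lifetimes, fewer molecules created) shows $\|Z'(t)\|_1\preceq\|Z(t)\|_1$, and for the reduced system Proposition~\ref{P2} identifies the Poisson part as exactly $\textup{Pois}(\|W(t)\|_1)$. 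You instead observe that the total non-extinction probability $\sum_{\nu\in\Theta_{m_j,d}}g^X_{u,t}(\nu,m_j)=1-(1-p_j(u,t))^{m_j}$ admits not only the upper bound $m_jp_j(u,t)$ but also the lower bound $p_j(u,t)$, which upgrades the paper's one-sided estimate into the two-sided sandwich $\|W(t)\|_1\le\Lambda(t)\le m^*\|W(t)\|_1$; both directions then drop out of a single chain of equivalences ($\prZ$ tight iff $\|Q(t)\|_1$ tight iff $N(t)$ tight iff $\Lambda(t)$ tight iff $\prW$ tight) with no auxiliary network and no coupling. What your route buys is economy and symmetry; what the paper's route buys is that the reduced system with $m'_j\in\{0,1\}$ is an already fully understood object, so the converse requires no computation with multinomial tail probabilities. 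The supporting steps you rely on --- the pathwise sandwich $N(t)\le\|Q(t)\|_1\le m^*N(t)$, the deterministic bound $\|P(t)\|_1\le\|Z(0)\|_1$ for the multinomial part, and the identity $\|W(t)\|_1=\sum_{j}\int_0^t\widetilde\lambda_{0j}(X(u))\,e^\top\Phi(u,t)e_j\,du$ coming from the non-negativity of the entries of $\Phi(u,t)$ --- are all correct, as is your handling of the degenerate case $m^*=0$.
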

\begin{proof}
First, it follows from the definition of tightness that $\prW$ is tight if and only if $\{\|W(t)\|_1\colon t\ge 0\}$ is tight. By Lemma~\ref{lem:interpretation}\ref{item:probability}, we further have
$$\|W(t)\|_1=\left\|\int_{0}^{t}\Phi(u,t)B_X(u)\,du\right\|_1=\int_{0}^{t}e^\top\Phi(u,t)B_X(u)\,du.$$
 Assume that $\prW$ is tight. Since $Z(t)\in\Z_{\geq0}^d$, we have $\|Z(t)\|_1=\sum_{j=1}^{d} Z_j(t)=e^\top Z(t)$. Using Proposition~\ref{P2}, the random variable $\|Z(t)\|_1$, conditioned on $\mathcal{F}^X_t$,  is stochastically bounded by
\begin{equation}\label{eq:Ztbound}
\|Z(t)\|_1\preceq \|Z(0)\|_1 +m\sum_{j=1}^d\sum_{\nu \in \Theta_{m_j,d}} N_{\nu j}(t),
\end{equation}
where $m=\max(m_j\colon j=1,\ldots,d)$ and $N_{\nu j}$ are independent random variables with
$$N_{\nu j}(t)\sim \textup{Pois}\left(\int_{0}^{t}\lambda_{0j}(X(u))\,g^X_{u,t}(\nu, m_{j})\,du\right).$$
By definition of configuration and of $m$, we have $\|\nu\|_1\le m$ for all $\nu \in \cup_{j=1}^d\Theta_{m_j,d}$. Using the definition of $g^X_{u,t}(\nu, m_j)$ and independence (conditioned on $\Ft$) of the random variables $N_{\nu j}(t)$ for $\nu\in \Theta_{m_j,d}$, $1\leq j\leq d$,  equation \eqref{eq:Ztbound}  reduces to 
\begin{align*}
 \|Z(t)\|_1&\preceq \|Z(0)\|_1 +m\textup{Pois}\left(\sum_{j=1}^d  \int_{0}^{t} \lambda_{0j}(X(u))\left[1-g^X_{u,t}((0,\ldots,0), m_{j})\right]\,du\right)\\
 &\sim \|Z(0)\|_1 +m\textup{Pois}\left(\sum_{j=1}^d   \int_{0}^{t}\lambda_{0j}(X(u))\left[1-(1-e^\top\Phi(u,t)e_j)^{m_j}\right]\,du\right)\\
 &\preceq  \|Z(0)\|_1 +m\textup{Pois}\left(m \sum_{j=1}^d\int_{0}^{t}\lambda_{0j}(X(u))\,e^\top\Phi(u,t)e_j\,du\right)\\
&\sim \|Z(0)\|_1 +m\textup{Pois} \left(m \int_{0}^{t}e^\top\Phi(u,t)B_X(u)\,du\right),
 \end{align*}
by recalling the definition \eqref{eq:BX} of $B_X(u)$. Furthermore, it is  used that $1-ka\le (1-a)^k$ for $0\le a\le 1$ and $k\geq 1$. Using Lemma~\ref{lem:tight_poisson} and the stochastic upper bound, tightness of $\prZ$ follows. 

Conversely, assume that $\prZ$ is tight with the given $m$. Then, the molecular count process $\prZp$ corresponding to the process with  $m'=(\min(1,m_1),\ldots,\min(1,m_{d}))$ in \eqref{model} is also tight. Indeed, $\|Z'(t)\|_1$ is stochastically bounded from above by $\|Z(t)\|_1$ for all $t\ge 0$, because
\begin{itemize}
 \item the lifetime distribution of any molecule created in any of the two settings is the same as it does not depend on the choice of $m$,
 \item every molecule that is created in any of the two settings evolves independently of the other molecules being present, given $\{\Ft\}_{t\ge 0}$ (see Remark~\ref{rem:independence}),
 \item in the process $\prZp$ less molecules are created than in the process $\prZ$ (the two processes can be coupled such that the reactions $0\to m_jS_j$ and $0\to m'_jS_j$ occur at the same time).
\end{itemize}

The choice of $m$ does  not affect the quantities $A_X(t)$ and $B_X(t)$, hence the process $\prW$ is not affected either. It follows from Proposition~\ref{P2} that
\begin{align*}
 \textup{Pois} \left(\int_{0}^{t}e^\top\Phi(u,t)B_X(u)\,du\right)&\sim e^\top \sum_{j=1}^d\sum_{\nu\in \Theta_{m'_j,d}}\nu N'_{\nu j}(t)\\
 &\preceq e^\top Z'(t) =\|Z'(t)\|_1.
\end{align*}
Finally, it follows from the tightness of $\prZp$ and  Lemma~\ref{lem:tight_poisson} that $\prW$ is tight, which concludes the proof.
\end{proof}

We are now ready to prove Lemma \ref{cor:ergodic}.

\begin{proof}[Proof of Lemma \ref{cor:ergodic}]
Assume that $\prXZ$ is ergodic for any initial condition. Then $\prZ$ is tight, which by Lemma \ref{lem:tight_iff_tight} implies that $\prW$ is tight. 

Now assume that $\prW$ is tight. Then, by Lemma \ref{lem:tight_iff_tight} the process $\prZ$ is tight as well. It follows that $\prXZ$ is tight because $\prX$ is ergodic by assumption. Hence, by Prokhorov’s theorem, $\prXZ$ is sequentially compact \cite{billingsley}. Since $\prXZ$ is a CTMC, the accumulation point of $\prXZ$ is unique, which implies that $\prXZ$ is ergodic.
\end{proof}

\subsection{Additional results}

\begin{Lemma}\label{Appe1}
Suppose  $\Phi(t)$ is a fundamental matrix solution tp $x'=A(t)x$. 
The solution of  $\lambda'(t)=A(t)\lambda(t)+b(t)$ with initial condition $\lambda(u)=\lambda_{u}$,  is 
 \begin{equation} \lambda(t)=\Phi(t)\Phi^{-1}(u)\lambda_{u}+\int_{u}^{t}\Phi(t)\Phi^{-1}(s)b(s)ds,\quad t>u\ge 0. \label{Leme1}
\end{equation}
\end{Lemma}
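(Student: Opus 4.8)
The plan is to recognise \eqref{Leme1} as the classical variation-of-parameters formula and to prove it by the direct route: guess the solution, verify it solves the initial value problem, and then invoke uniqueness. The motivation for the formula is the ansatz $\lambda(t)=\Phi(t)c(t)$ for some differentiable $\R^n$-valued function $c$; substituting into $\lambda'(t)=A(t)\lambda(t)+b(t)$ and using $\frac{d}{dt}\Phi(t)=A(t)\Phi(t)$ gives $A(t)\Phi(t)c(t)+\Phi(t)c'(t)=A(t)\Phi(t)c(t)+b(t)$, hence $c'(t)=\Phi^{-1}(t)b(t)$, and integrating from $u$ to $t$ with $c(u)=\Phi^{-1}(u)\lambda_u$ yields exactly \eqref{Leme1}. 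Since $\Phi(\cdot)$ is non-singular on $\R_{>0}$ by the definition of a fundamental matrix solution, and $b(\cdot)$ is locally integrable (in our application $b=B_X$ is piecewise constant), the integrand $\Phi^{-1}(s)b(s)$ is locally integrable and the expression on the right-hand side of \eqref{Leme1} is well defined.

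To turn this into a clean proof I would simply set $\psi(t)=\Phi(t)\Phi^{-1}(u)\lambda_u+\Phi(t)\int_u^t\Phi^{-1}(s)b(s)\,ds$, where I have pulled the $t$-independent factor $\Phi(t)$ out of the integral, and check two things. First, $\psi(u)=\Phi(u)\Phi^{-1}(u)\lambda_u+0=\lambda_u$, so the initial condition holds. Second, differentiating with the product rule, using $\frac{d}{dt}\Phi(t)=A(t)\Phi(t)$ and the fundamental theorem of calculus for the upper limit of the integral,
\begin{align*}
\frac{d}{dt}\psi(t)&=A(t)\Phi(t)\Phi^{-1}(u)\lambda_u+A(t)\Phi(t)\int_u^t\Phi^{-1}(s)b(s)\,ds+\Phi(t)\Phi^{-1}(t)b(t)\\
&=A(t)\Big(\Phi(t)\Phi^{-1}(u)\lambda_u+\Phi(t)\int_u^t\Phi^{-1}(s)b(s)\,ds\Big)+b(t)=A(t)\psi(t)+b(t),
\end{align*}
so $\psi$ solves $\lambda'(t)=A(t)\lambda(t)+b(t)$ with $\lambda(u)=\lambda_u$.

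Finally, I would close the argument by uniqueness: the right-hand side of the ODE is affine in the unknown and hence locally Lipschitz in it uniformly on compact time intervals, so by the Picard--Lindel\"of theorem the initial value problem has a unique solution on $[u,\infty)$; therefore $\lambda(t)=\psi(t)$, which is \eqref{Leme1}. There is no genuine obstacle in this lemma; the only point that needs a word of care is that $\Phi(s)$ is invertible for every $s$ in the relevant range, which is precisely what the definition of a fundamental matrix solution guarantees, so that $\Phi^{-1}(u)$ and $\Phi^{-1}(s)$ in the statement are meaningful, and that the integrand is integrable enough for the fundamental theorem of calculus to apply at the level of generality of $b$ used in the paper.
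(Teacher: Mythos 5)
Your proof is correct and follows essentially the same route as the paper: both verify the formula by differentiating the right-hand side of \eqref{Leme1} using $\Phi'(t)=A(t)\Phi(t)$ and the fundamental theorem of calculus. You additionally check the initial condition and invoke Picard--Lindel\"of uniqueness, which the paper leaves implicit; this is a harmless (indeed slightly more complete) elaboration rather than a different argument.
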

\begin{proof}
Using that $\Phi'(t)=A(t) \Phi(t)$ by assumption, we find the derivative of \eqref{Leme1} to be
\begin{align*}
\lambda'(t)&=\Phi(t)'\Phi^{-1}(u)\lambda_{u}+\Phi(t)\Phi^{-1}(t)b(t)+ \int_{u}^{t}\Phi'(t)\Phi^{-1}(s)b(s)ds\\
&=A(t)\Phi(t)\Phi^{-1}(u)\lambda_u+b(t)+\int_{u}^{t}A(t)\Phi(t)\Phi^{-1}(s)b(s)ds=A(t)\lambda(t)+b(t).
\end{align*}
\end{proof}


\end{document}